\newtheorem{theorem}{Theorem}
\newtheorem{prop}{Proposition}
\newtheorem{observation}{Observation}
\newtheorem{definition}{Definition}
\newtheorem{conjecture}{Conjecture}
\newtheorem{lemma}{Lemma}
\newtheorem{claim}{Claim}
\tikzstyle{vert}=[shape=circle,draw=black,fill=black, inner sep=.75mm]
\tikzstyle{fixed}=[shape=rectangle,draw=black,fill=white, inner sep=2mm]
\tikzstyle{fixed2}=[shape=circle,draw=black,fill=white, inner sep=1.2mm]
\tikzstyle{uncolored}=[dashed,thick]
\tikzstyle{uncolored2}=[dotted,thick]
\tikzstyle{red?}=[dashed,thick,color=red]
\tikzstyle{blue?}=[dashed,thick,color=blue]
\tikzstyle{purple?}=[dashed,thick,color=purple]
\tikzstyle{purple}=[solid,thick,color=purple]
\tikzstyle{red}=[solid,thick,color=red]
\tikzstyle{blue}=[solid,thick,color=blue]
\tikzstyle{green}=[solid,thick,color=green]
\tikzstyle{green?}=[dashed,thick,color=green]
\newcommand{\DrawGamma}[4]{

	\node (label) at (0,.5) [] {#4};
	
    \node (g1) at (-1.37,.5) [vert] {};
	\node (g2) at (-1.37,-.5) [vert] {};
	\node (g3) at (-.5,0) [vert] {};
	\node (g4) at (.5,0) [vert] {};
	\node (g5) at (1.37,.5) [vert] {};
	\node (g6) at (1.37,-.5) [vert] {};
	
	\draw [thick,#1] (g1) -- node[left] {} (g2) ; 
	\draw [thick,#2] (g1) -- node[above] {} (g3) ; 
	
	\draw [thick,#2] (g2) -- node[below] {} (g3) ; 
	\draw [thick,#1] (g3) -- node[above] {} (g4) ; 
	
	\draw [thick,#3] (g4) -- node[above] {} (g5) ; 
	\draw [thick,#3] (g4) -- node[below] {} (g6) ; 
	
	\draw [thick,#1] (g5) -- node[right] {} (g6) ; 

}
\newcommand{\DrawGammaSpecial}[4]{
	
	\node (label) at (0,.5) [] {#4};
	
    \node (g1) at (-1.37,.5) [vert] {};
	\node (g2) at (-1.37,-.5) [vert] {};
	\node (g3) at (-.5,0) [vert] {};
	\node (g4) at (.5,0) [vert] {};
	\node (g5) at (1.37,.5) [vert] {};
	\node (g6) at (1.37,-.5) [vert] {};
	
	\draw [thick, #1] (g1) -- node[left] {} (g2) ; 
	\draw [thick, #2] (g1) -- node[above] {} (g3) ; 
	
	\draw [thick, #2] (g2) -- node[below] {} (g3) ; 
	\draw [thick, #1] (g3) -- node[above] {} (g4) ; 
	
	\draw [thick, #3] (g4) -- node[above] {} (g5) ; 
	\draw [thick, #3] (g4) -- node[below] {} (g6) ; 
	
	\draw [#1] (g5) -- node[right] {} (g6) ; 
	
	\foreach \x/\y in {g1/g5, g1/g6, g1/g4, g2/g6, g2/g4, g2/g5, g5/g3, g6/g3}
		\draw [dashed] (\x) -- node[right] {} (\y) ; 

}
\newcommand{\E}{E}
\renewcommand{\d}{\delta}
\newcommand{\rbrac}[1]{\left(#1\right)} 
\newcommand{\mc}[1]{\mathcal{#1}}
\newcommand{\eps}{\varepsilon}
\newcommand{\nn}{\nonumber}
\def\G{\Gamma}
\def\g{\gamma}
\title{The generalized Ramsey number $f(n, 5, 8) = \frac 67 n + o(n)$}
\author{Patrick Bennett}
\address{Department of Mathematics, Western Michigan University, Kalamazoo, MI, USA}
\thanks{The first author was supported in part by Simons Foundation Grant \#426894.}
\email{\tt patrick.bennett@wmich.edu}
\author{Ryan Cushman}\thanks{}
\address{Department of Mathematics,
University of Wisconsin-Eau Claire, Eau Claire, WI, USA} 
\email{\tt cushmarj@uwec.edu}
\author{Andrzej Dudek}
\address{Department of Mathematics, Western Michigan University, Kalamazoo, MI, USA}
\thanks{The third author was supported in part by Simons Foundation Grant \#522400.}
\email{\tt andrzej.dudek@wmich.edu}
\begin{document}

\maketitle 

\begin{abstract}
    A \emph{$(p, q)$-coloring} of $K_n$ is a coloring of the edges of $K_n$ such that every $p$-clique has at least $q$ distinct colors among its edges. The generalized Ramsey number $f(n, p, q)$ is the minimum number of colors such that $K_n$ has a $(p, q)$-coloring. Gomez-Leos, Heath, Parker, Schweider and Zerbib recently proved $f(n, 5, 8) \ge \frac 67 (n-1)$. Here we prove an asymptotically matching upper bound. 
\end{abstract}

\section{Introduction}

In the 1970's, Erd\H{o}s and Shelah \cite{E75} proposed the following generalization of the classical Ramsey problem. 
 \begin{definition}
Fix  integers $p, q$ such that $p \ge 3$ and $2 \le q \le \binom p2$. A \emph{$(p, q)$-coloring} of $K_n$ is a coloring of the edges of $K_n$ such that every $p$-clique has at least $q$ distinct colors among its edges. The generalized Ramsey number $f(n, p, q)$ is the minimum number of colors such that $K_n$ has a $(p, q)$-coloring.
\end{definition}
Note that a $(p, 2)$-coloring only avoids monochromatic $p$-cliques, so the problem reduces to the standard multicolor Ramsey problem. In general, the problem is to estimate $f(n, p, q)$ for fixed $p, q$ and $n$ large. Erd\H{o}s and Gy\'arf\'as \cite{EG97} systematically studied this problem and obtained many bounds for various ranges of $p$ and $q$. For our current paper, the most relevant result of Erd\H{o}s and Gy\'arf\'as \cite{EG97} is the following. For arbitrary $p$ and 
\begin{equation}\label{eqn:qlin}
   q=q_{\text{lin}}(p):=\binom p2-p+3, 
\end{equation}
$f(n,p,q)$ is linear in $n$ (proving linear lower and upper bounds with different coefficients), but that $f(n,p,q-1)$ is sublinear. Note that $q_{\text{lin}}(4)=5$. Erd\H{o}s and Gy\'arf\'as \cite{EG97} paid special attention to the case of $f(n, 4, 5)$ and proved more specific bounds for this case: 
\[\frac 56 n + o(n) \le f(n, 4, 5) \le n+ o(n).\] 
 Erd\H{o}s and Gy\'arf\'as had differing opinions on which bound should be closer to the truth. Erd\H{o}s believed that the upper bound was essentially correct, while Gy\'arf\'as thought it should be closer to the lower bound. Together with Pra\l at, the current authors \cite{BCDP22} proved that $ f(n, 4, 5)=\frac 56 n + o(n)$ and so Gy\'arf\'as was right (this result was later reproved by Joos and Mubayi \cite{JM22}, which we briefly discuss below). The coloring we used in \cite{BCDP22} was obtained by a randomized coloring process which we analyzed using the differential equation method pioneered and popularized by Wormald (for a gentle introduction by the first and third authors, see \cite{BD22}).
 
 Recently two teams of researchers developed powerful ``black box'' theorems which can be used to analyze a certain family of random processes. With a very minor technical modification, these theorems can be applied to the process we analyzed in~\cite{BCDP22}. Indeed, Glock, Joos, Kim, K\"uhn and Lichev \cite{GJKKL} used the differential equation method  to greedily build a matching in a hypergraph such that the matching is forbidden from containing certain sets of edges they call {\em conflicts}. Delcourt and Postle \cite{DP22} used a semi-random ``nibble'' method to obtain similar results. Joos and Mubayi \cite{JM22} showed that the process we analyzed in \cite{BCDP22} could be encoded as an instance of this more general process analyzed in \cite{GJKKL}, as well as obtaining a few results on related generalized Ramsey problems by using similar encodings. 

 In this paper, we asymptotically determine $f(n, 5, q_{\text{lin}}(5))$. Notice that by \eqref{eqn:qlin}, $q_{\text{lin}}(5)=8$.
\begin{theorem}\label{thm:main}
We have
    \[
    f(n, 5, 8) = \frac 67 n + o(n). 
    \]
\end{theorem}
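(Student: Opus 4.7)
My plan is to prove the matching upper bound $f(n,5,8) \le (6/7+o(1))n$ via a random greedy coloring, analyzed through the conflict-free matching machinery of Glock--Joos--Kim--K\"uhn--Lichev \cite{GJKKL} (or equivalently Delcourt--Postle \cite{DP22}), following the template used in \cite{BCDP22,JM22} for the $(4,5)$ problem. The first step is to isolate the local forbidden configurations. If the colors on the $10$ edges of a fixed $K_5$ have multiplicities $m_1,m_2,\ldots$, then the condition ``at least $8$ distinct colors'' is equivalent to $\sum_c (m_c-1) \le 2$ (summed over the colors actually used), so the minimal conflicts inside any $K_5$ are exactly: (A) one color on $4$ edges; (B) one color on $3$ edges together with another on $2$ edges; (C) three distinct colors each appearing on $2$ edges. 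A $(5,8)$-coloring is precisely one avoiding (A), (B), and (C) in every $K_5$.

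Next, I would set up an auxiliary hypergraph $H$ with vertex set $E(K_n)\times\mathcal{C}$, where $|\mathcal{C}|=N=\lceil (6/7+\eps)n\rceil$ for small $\eps>0$. For each $e\in E(K_n)$ we add a ``choice'' hyperedge $\{(e,c):c\in\mathcal{C}\}$, so that any matching uses at most one color per edge; and for each realization of (A), (B), or (C) we add a ``conflict'' hyperedge of size $4$, $5$, or $6$ respectively. A conflict-free matching covering all choice hyperedges is then exactly a $(5,8)$-coloring using at most $N$ colors. The conflict-free matching theorem of \cite{GJKKL} should yield a matching covering all but $o(n^2)$ of the choice hyperedges; each leftover edge can then receive a fresh color at a cost of $o(n)$ extra colors. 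Sending $\eps\to 0$ and combining with the lower bound of Gomez-Leos, Heath, Parker, Schweider, and Zerbib gives the theorem.

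The main obstacle is verifying the sparsity and degree hypotheses of \cite{GJKKL}, particularly for the Type-(C) conflicts. Conflicts of Type (A) and (B) involve only one or two colors and should admit counts broadly similar to those that arose in \cite{BCDP22} for the $(4,5)$ problem, where the palette size $6n/7$ will emerge as precisely the threshold at which the expected number of conflicts per step can be controlled. A Type-(C) conflict, however, consists of three distinct colors whose pairs of edges collectively tile a $K_5$ as an edge-disjoint union of three ``two-edge pictures'', and one must carefully count both the total number of such conflicts and the number passing through any prescribed pair $(e,c)$ --- as well as through any pair of pairs $\{(e,c),(e',c')\}$ --- to confirm the required codegree and sparsity bounds. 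Showing that these counts are small enough for the black-box theorem to apply, and then cleanly encoding the three conflict families simultaneously into the \cite{GJKKL} framework, is the technical heart of the argument and the main point of departure from the earlier $(4,5)$ analysis.
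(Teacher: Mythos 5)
Your reduction of the $(5,8)$-condition to avoiding the three minimal configurations (A), (B), (C) is correct, and the choice of the conflict-free-matching machinery is the right family of tools. But the proposal has a fatal quantitative gap: you cannot color the edges of $K_n$ \emph{one at a time} from a palette of size $(6/7+\eps)n$ and have the hypotheses of the black-box theorems hold. In your encoding the relevant degree parameter is $d\approx n$, and condition \ref{cond:c2a}/\ref{cond:c2b} requires $\Delta_{j'}(\mc{C}^{(j)})\le d^{j-j'-\eps}$. Take a type-(A) conflict ($j=4$): fixing two hyperedges $(e_1,c),(e_2,c)$ where $e_1,e_2$ share a vertex of $K_n$ pins down only $3$ vertices and $1$ color, so the number of completions is $\Theta(n^2)=\Theta(d^2)$, which violates the required bound $d^{2-\eps}$; the same failure occurs for type (B) with $j=5$, $j'=2$ ($\Theta(n^3)$ versus $d^{3-\eps}$). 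This is precisely why the paper does \emph{not} color edges individually in its main phase. Instead, the lower-bound computation is used to deduce that almost every color class must be structured as in Figure~\ref{fig:P}(7), and the coloring is obtained by packing entire colored $7$-edge gadgets as single hyperedges of $\mc{H}_A$, with auxiliary vertices $(v,c)$ forcing each monochromatic component to live inside one gadget. Then fixing $j'$ gadgets of a conflict always fixes at least $7j'+3$ vertices-plus-colors, which is what rescues \ref{cond:c2a}. Making that packing near-perfect in turn requires the additional devices of shareable edges and non-touching colors, none of which appear in your plan; your remark that types (A) and (B) ``should admit counts broadly similar to those in the $(4,5)$ problem'' overlooks that the $(4,5)$ construction also packs $2$-path gadgets rather than single edges.

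A second, independent error is the treatment of the leftover. The almost-perfect matching theorems leave $o(n^2)$ edges of $K_n$ uncolored, and assigning each a fresh color costs $o(n^2)$ colors, not $o(n)$. Finishing within $o(n)$ extra colors forces you to reuse each new color on $\Theta(n)$ leftover edges while still avoiding (A), (B), (C) against the already-placed colors; this is the paper's entire Phase~B, which needs the Delcourt--Postle perfect-matching theorem (a na\"ive Local Lemma application fails because the residual conflict system is too dense) together with quasirandomness properties of the Phase~A coloring (Lemma~\ref{lem:quasi-random}) that must be extracted via weight-function extensions of the black box. Both of these missing ingredients are essential, so the proposal as written does not yield the theorem.
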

Gomez-Leos, Heath, Parker, Schweider and Zerbib \cite{GHPSZ23} recently proved that $\frac 67 (n-1) \le f(n, 5, 8) \le n + o(n)$ (the upper bound was also independently proved in~\cite{BCD23}). In this paper we give an alternate proof for this lower bound, since we believe it illuminates what must be done to get a matching upper bound. But our main contribution is to obtain a $(5, 8)$-coloring using $\frac 67 n + o(n)$ colors. To do this we will actually separately apply certain results of Delcourt and Postle \cite{DP22} as well as Glock, Joos, Kim, K\"uhn and Lichev \cite{GJKKL}. The results in \cite{DP22} and \cite{GJKKL} are quite similar in the big picture, but there are important technical differences that give each one an advantage over the other in certain instances. 

\section{Gadget packing and conflict-free hypergraph matchings}

In this section we describe some extremal problems which researchers have made significant progress on in recent years, especially since the results of Delcourt and Postle \cite{DP22} and Glock, Joos, Kim, K\"uhn and Lichev \cite{GJKKL}. The recent progress on many of these problems used ideas and methods that serve as a good warm-up for how we will obtain our coloring. 

The following problem was first studied by Brown, Erd\H{o}s and S\'os~\cite{BES1973}. 
\begin{definition}
 Let $\mc{H}$ be a $3$-uniform hypergraph. An \emph{$(s, k)$-configuration} in $\mc{H}$ is a set of $s$ vertices inducing $k$ or more edges. We say $\mc{H}$ is \emph{$(s, k)$-free} if it has no $(s, k)$-configuration. Let $F(n; s, k)$ be the largest possible number of edges in an $(s, k)$-free $3$-uniform hypergraph with $n$ vertices. In terms of classical extremal numbers, 
\[
  F(n; s, k) =  \mbox{ex}_{r}(n, \mc{G}_{s, k}),
\]
 where $\mc{G}_{s, k}$ is the family of all $3$-uniform hypergraphs on $s$ vertices and $k$ edges.
\end{definition}
Brown, Erd\H{o}s and S\'os~\cite{BES1973} made two well-known conjectures on this problem, and the relevant one for us is as follows.
\begin{conjecture}\label{conj:BES}
    For all $\ell \ge 4$ the following limit exists:
\[
    \lim_{n \rightarrow \infty} \frac{F\rbrac{n; \ell, \ell-2}}{n^2}.
\]
\end{conjecture}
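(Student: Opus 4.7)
The plan is to attempt convergence of $f(n) := F(n; \ell, \ell-2)$ via a superadditive, Fekete-type argument. An upper bound $f(n) = O(n^2)$ is standard (for instance, by double counting shadow pairs), so the task reduces to ruling out oscillation of $f(n)/n^2$. The natural target is an inequality
\[
f(n+m) \ge f(n) + f(m) - o\big((n+m)^2\big),
\]
which, together with the $O(n^2)$ upper bound, would yield existence of $\lim_{n \to \infty} f(n)/n^2$ by a Fekete-type lemma for almost-superadditive sequences.

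The natural construction is to take near-extremal $(\ell, \ell-2)$-free hypergraphs $\mathcal{H}_1$ on an $n$-set $A$ and $\mathcal{H}_2$ on a disjoint $m$-set $B$; their disjoint union already delivers the baseline $f(n) + f(m)$ edges and contains no $(\ell, \ell-2)$-configurations. The subtlety is to verify that no ``loss'' is incurred---or, more ambitiously, to add a positive density of \emph{cross-edges} (triples meeting both $A$ and $B$) without creating any new $(\ell, \ell-2)$-configurations. My preferred route for the cross-edge step would be a probabilistic nibble: sprinkle cross-edges independently at small density, then delete a small fraction to destroy the few forbidden configurations that appear. The conflict-free hypergraph matching theorems of Delcourt and Postle~\cite{DP22} and Glock, Joos, Kim, K\"uhn and Lichev~\cite{GJKKL} mentioned above look tailor-made for this, treating partial $(\ell, \ell-2)$-configurations as the conflicts.

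The main obstacle is verifying the codegree conditions required by those black-box theorems. The number of ways in which a single candidate cross-edge can be completed to a forbidden configuration using existing edges of $\mathcal{H}_1 \cup \mathcal{H}_2$ is sensitive to the fine structure of near-extremal $(\ell, \ell-2)$-free hypergraphs, and no such structural theory is available for general $\ell$. This is presumably the reason Conjecture~\ref{conj:BES} has remained open for fifty years. A realistic first step would be to verify the superadditive inequality for small values of $\ell$---for example $\ell = 4$, where $F(n; 4, 2)$ is essentially the Steiner triple system extremal problem and the limit $1/6$ is classical, and then $\ell = 5$---and to try to distil general principles from those cases.
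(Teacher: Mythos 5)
There is a genuine gap, and it sits at the very foundation of your plan: an (almost-)superadditive inequality $f(n+m) \ge f(n) + f(m) - o((n+m)^2)$ does \emph{not} imply convergence of $f(n)/n^2$. Fekete-type lemmas for superadditive sequences give convergence of $f(n)/n$, not of $f(n)/n^2$. Concretely, the disjoint-union bound with $m=n$ gives only $f(2n) \ge 2f(n)$, hence $f(2n)/(2n)^2 \ge \tfrac12 f(n)/n^2$ --- a factor-of-two loss each time you double, which is vacuous against the $O(n^2)$ upper bound. Taking the union of one copy on $A$ and one on $B$ preserves the edge count linearly in the number of copies, but the normalizing denominator grows quadratically; to preserve quadratic density you would need to pack on $n$ vertices roughly $\binom{n}{2}/\binom{n'}{2}$ near-edge-disjoint copies of a near-extremal $n'$-vertex hypergraph, not two. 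Your cross-edge sprinkling is an attempt to recover the missing $\Theta(nm)$ edges, but even a positive density of cross-edges does not give convergence unless that density is tied back to the extremal density itself, which your scheme does not arrange. So the strategy fails before the (real, but secondary) difficulty you identify about codegree conditions.

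Note also that this statement is a conjecture of Brown, Erd\H{o}s and S\'os; the paper does not prove it but records that Delcourt and Postle~\cite{DP22b} did. Their argument is the corrected version of your idea: they treat an entire near-extremal $(\ell,\ell-2)$-free hypergraph on $n'$ vertices as a \emph{gadget} and pack many copies of it into $K_n$ for $n \gg n'$ via conflict-free hypergraph matchings, yielding the approximate monotonicity $F(n;\ell,\ell-2)/\binom{n}{2} \ge F(n';\ell,\ell-2)/\binom{n'}{2} - o(1)$. Combined with the $O(n^2)$ upper bound, approximate monotonicity of the \emph{normalized} sequence (rather than superadditivity of the unnormalized one) is what forces the limit to exist. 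If you want to salvage your write-up, replace the superadditivity target with this packing/monotonicity statement; the conflict-free matching machinery you cite is then used exactly as in \cite{DP22b}, with the codegree verification carried out for intersections of whole gadget copies rather than of single cross-edges.
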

For $\ell=4$ above we get $F(n; 4, 2)$, which is just the maximum number of edges in a linear $3$-uniform hypergraph, i.e.\ a partial Steiner system. Since almost-perfect partial Steiner systems exist, we have $F(n; 4, 2) = \frac 16 n^2 + o(n^2)$. 

For the case $\ell=5$, Glock \cite{G19} proved Conjecture \ref{conj:BES} by showing that $F(n; 5, 3) = \frac 15 n^2 +o(n)$. We briefly sketch a proof here, since it is a good warm-up. First we sketch the upper bound. Suppose $\mc{H}$ is $(5, 3)$-free. Let $x_1$ be the number of edges that share at most one vertex with every other edge. Let $x_2$ be the number of pairs of edges that share two vertices. Since $\mc{H}$ is $(5, 3)$-free, 
\begin{equation}\label{eqn:glock1}
    |\mc{H}| = x_1 + 2x_2 .
\end{equation}
Now we will obtain an inequality restricting $x_1, x_2$ as follows. For any edge $e = xyz$ counted by $x_1$ we say $e$ {\em hits} all three pairs $xy, yz, xz$ contained in $e$. For any pair of edges $e,e'$ counted by $x_2$ we say the pair $e, e'$ hits all 5 pairs of vertices contained in $e$ or $e'$. Since each pair of vertices is hit at most once, we conclude that 
\begin{equation}\label{eqn:glock2}
    3x_1 + 5x_2 \le \binom n2.
\end{equation}
Now, \eqref{eqn:glock1} and \eqref{eqn:glock2} give
\[
|\mc{H}| = x_1 + 2x_2 = \frac{2}{5}\left(\frac{5}{2}x_1 + 5x_2 \right)
\le \frac{2}{5}\left(3x_1 + 5x_2 \right)
\le \frac{2}{5} \binom n2
\le \frac 15 n^2.
\]
In the next paragraph we sketch a matching lower bound (up to some $o(n)$ error term). 

One appealing aspect of the proof for the upper bound  is that it illuminates how to obtain a construction that asymptotically matches it. First, to asymptotically maximize \eqref{eqn:glock1}  subject to the constraint \eqref{eqn:glock2} we must have $x_1 = o(n^2), x_2 = \frac{1}{10}n^2 + o(n^2)$. So our hypergraph consists almost entirely of pairs of edges sharing two vertices. In other words, we would like to pack copies of the hypergraph $\mc{G}$ shown in Figure~\ref{fig:glock}. Our constraint \eqref{eqn:glock2} must be asymptotically tight, so we must hit nearly every pair of vertices. Recall that each pair $e=xyz,\ e'=wyz$ counted by $x_2$ hits $xy, yz, xz, wy, wz$ but not the pair $xw$. Since almost every pair must be hit, the pairs that play the role of $xw$ must be very rare. Indeed, for any such pair of vertices $xw$ there must be many pairs of edges $e, e'$ for which $xw$ plays the same role. Thus, we would like to pack copies of $\mc{G}$ which can share at most two vertices, and if they do share two vertices, then the shared pair of vertices must be playing the role of $xw$ in both copies of $\mc{G}$. Thus, we say $xw$ is a {\em sharable} pair. We call the hypergraph $\mc{G}$, together with the specification that the only sharable pair is $xw$, a {\em gadget}. We would like to pack this gadget in an almost-perfect way so that almost every pair of vertices is contained in a hyperedge. The method Glock used in \cite{G19} was to glue many copies of $\mc{G}$ together by their sharable pair of vertices, and then pack copies of the resulting hypergraph $\mc{G}'$ such that edges from different copies are only allowed to share one vertex. Glock then packed copies of $\mc{G}'$ in a fairly standard way: he defined an appropriate auxiliary hypergraph in which the desired packing corresponds to an almost-perfect matching, and then applied the result of Pippinger and Spencer \cite{PS89}. The result in \cite{PS89} asserts that ``nice'' hypergraphs have almost-perfect matchings, and was also proved using a semi-random nibble method. This result (and its proof using a nibble method) is an important predecessor to the more recent work on conflict-free hypergraph matchings by Delcourt and Postle \cite{DP22} as well as Glock, Joos, Kim, K\"uhn and Lichev \cite{GJKKL}. This completes our proof sketch for Glock's result~\cite{G19}. 

\begin{figure}[ht]
\begin{center}
\centering
\begin{tikzpicture}
	\node (x) at (-.1,0) [vert, label=left:$x$] {};
	\node (w) at (2.1,0) [vert, label=right:$w$] {};
	\node (y) at (.4,2.2) [vert, label=left:$y$] {};
	\node (z) at (1.6,2.2) [vert, label=right:$z$] {};
	
	\node (xsw) at (-.5,-.5) {};	
	\node (xse) at (.5,-.5) {};
	\node (ynw) at (-.5,2.5) {};	
	\node (zne) at (2.5,2.5) {};
	
	\node (wse) at (2.5,-.5) {};	

	\draw [dashed] (w) -- (x);

    \draw [very thick] plot [smooth cycle] coordinates {(xsw) (ynw) (zne)};
    \draw [very thick]plot [smooth cycle] coordinates {(wse) (zne) (ynw)};
\end{tikzpicture}
\end{center}
\caption{Glock's gadget $\mc{G}$.}
\label{fig:glock}
\end{figure}

Very recently, Glock, Joos, Kim, K\"uhn, Lichev and Pikhurko \cite{GJKKLP2022} proved Conjecture \ref{conj:BES} for the case $\ell=6$ by finding the limit. The proof is significantly more complicated but bears a resemblance to the sketch we provided above for Glock's proof \cite{G19} of the case $\ell=5$. In particular, the lower bound in \cite{GJKKLP2022} is obtained by packing an appropriately defined gadget, i.e.\ a hypergraph which has some sharable pairs. The upper bound follows from establishing a few constraints like \eqref{eqn:glock1} and \eqref{eqn:glock2} and then using linear programming (and the optimal solution to the linear program tells us what gadget to pack). However, to avoid all possible $(6, 4)$-configurations, one must avoid certain configurations consisting of several gadgets intersecting in certain ways. For this, a result like Pippinger and Spencer \cite{PS89} is not sufficient, and so they use conflict-free hypergraph matchings instead. 

Shortly after the $\ell=6$ case was proved in \cite{GJKKLP2022}, Delcourt and Postle \cite{DP22b} proved Conjecture~\ref{conj:BES} in full generality (but without actually finding the limit). Essentially, Delcourt and Postle~\cite{DP22b} proved that the sequence in Conjecture \ref{conj:BES} is ``approximately monotonic''. They did this by showing how to use near-optimal hypergraphs on say $n'$ vertices as gadgets that can be packed on $n \gg n'$ vertices. Even more recently, Glock, Kim, Lichev, Pikhurko and Sun \cite{GKLPS24} found the limit for the cases $\ell = 7, 8, 9$ of Conjecture \ref{conj:BES}, where all lower bounds followed by packing various gadgets. One of these gadgets has 63 vertices and 61 edges (but the rest are much smaller). 

For coloring problems such as estimating generalized Ramsey numbers, one can sometimes pack gadgets where the edges are colored. These gadgets generally have special rules about how they should be colored, which may depend on the other gadgets they intersect with. This is the idea behind our result with Pra\l at~\cite{BCDP22} for $f(n, 4, 5)$ as well as all the results in Joos and Mubayi \cite{JM22}. In \cite{BCD23}, the present authors showed how to obtain a near-optimal coloring for certain generalized Ramsey problems using a near-optimal hypergraph for an appropriate instance of the Brown-Erd\H{o}s-S\"os problem, which in some cases implies that we can find colored gadgets for the former problem by starting with gadgets for the latter. Very recently, Lane and Morrison \cite{LM24} and independently Bal, the first author, Heath and Zerbib \cite{BBHZ24} proved some more generalized Ramsey bounds by packing colored gadgets. To prove Theorem \ref{thm:main}, we will need a somewhat more complicated gadget than what was involved in these previous results. Our gadget will have sharable edges as well as some more complicated rules about how the colors must be chosen. 

\section{Lower bound}

Here we give an alternative proof that 
   $ f(n, 5, 8) \ge \frac 67 (n-1). $
    First we define the following. 
    \begin{definition}
        Given a (possibly only partial) coloring of the edges of $K_n$ and a set $S \subseteq V(K_n)$, suppose there are $x$ colored edges in $S$, having a total of $y\le x$ distinct colors. We say that $S$ has $x-y$ {\em color repetitions} or just {\em repetitions}. If $|S|=5$ and $S$ has 3 repetitions we say that $S$ is a $(5, 8)$-violation. A partial coloring with no $(5, 8)$-violation will be called a partial $(5, 8)$-coloring. Note that a partial coloring can be extended to a full $(5, 8)$-coloring if and only if it does not have any $(5, 8)$-violation .
    \end{definition}
    Consider a $(5, 8)$-coloring of $K_n$ using color set $C$. We will use this coloring to define a certain partition of the edges $E(K_n)$. We start with the partition $\mc{P}'$ into monochromatic components. In a $(5, 8)$-coloring a monochromatic component has at most three edges. For the rest of this proof whenever we say ``component'' we mean a monochromatic component. It is sufficient to assume that there are no monochromatic triangles. If there were a monochromatic (blue) triangle, each edge incident to this triangle would be an isolated edge in $\mathcal P'$ and no other edge could be blue. Thus, swapping the colors of an edge of the triangle and an incident edge would yield another $(5,8)$-coloring with the same number of colors without the monochromatic triangle. Therefore, each part of $\mc{P}'$ is either an isolated edge, 2-path, 3-path, or 3-star. For each part $P\in \mc{P}'$ consisting of a component in color $c$, we say $P$ {\em hits} each pair $(v, c)$ where $v$ is a vertex in the component. Clearly, each pair $(v, c)$ is hit by at most one part. 

    We will form a new partition $\mc{P}$ by starting with $\mc{P}'$ and then applying a sequence of operations where we take several parts of the current partition, remove them and replace them with their union (i.e.~ {\em merge} the parts). When we merge a set of parts into a new part $P$, we say $P$ hits all the pairs $(v, c)$ that were hit by any of the merged parts. We will also in some cases specify some additional pairs we will say are hit by $P$. It is important for our argument (and we will see it holds) that each pair $(v, c)$ is still hit by at most one part.

    Now we start to describe the merging. First, for any part $P =\{xy, xv, xu\} \in \mc{P}'$, i.e.\ a 3-star component, we see that $\mc{P}'$ also has singleton parts $\{uv\}, \{uy\}, \{vy\}$ (if any of these edges were in a larger component we would have a $(5, 8)$-violation). See Figure~\ref{fig:P}(1). We merge $P$ with these singletons forming a new part $Q$ with six edges. Note that two distinct 3-stars can share at most one vertex  since otherwise we have a $(5, 8)$-violation. Thus each singleton part in $\mc{P}'$ gets merged with at most one 3-star. In addition to all the 10 pairs hit by the parts we merged to form $Q$, we also say that $Q$ hits $(x, \text{green}), (x, \text{red}), (x, \text{orange})$ (see Figure~\ref{fig:P}(1)). Any scenario in which a pair is hit twice implies a $(5, 8)$-violation. Indeed, in such a scenario, we would be reusing some color used in $\{x,y,u,v\}$ and introducing one additional vertex, resulting in 3 repetitions on 5 vertices. Next, we merge all 3-stars with their respective singletons to form a new partition. We abuse notation and call the new partition $\mc{P}'$, and we will keep doing so after even more merging until we reach our final partition $\mc{P}$. 

    Next, for any part $P =\{ux, xy, yv\} \in \mc{P}'$, i.e.\ a 3-path component, we see that $\mc{P}'$ also has singleton parts $\{uy\}, \{xv\}$ (and also $\{uv\}$, but we don't need that for this merging). See Figure~i.e.\ \ref{fig:P}(2). We merge $P$ with these singletons forming a new part $Q$ with five edges. We say the new part hits the additional pairs $(x, \text{green}), (y, \text{red})$. Next, for any pair of parts $P=\{xu, uv\}, P'=\{xy, yv\}\in \mc{P}'$, i.e.\ a pair of 2-paths sharing endpoints, we will merge $P, P'$ and the singleton part $\{xv\}$. See Figure~i.e.\ \ref{fig:P}(3). We say the new part hits the additional pairs $(u, \text{green}), (y, \text{green})$. Next, for any pair of parts $P=\{xy, yu\}, P'=\{xu, uv\}\in \mc{P}'$ we will merge $P, P'$ and the singleton part $\{xv\}$. See Figure~i.e.\ \ref{fig:P}(4). We say the new part hits the additional pair $(u, \text{green})$.

    Next, for any remaining part $P=\{ux, xv\} \in \mc{P}'$, i.e.\ a 2-path which did not get merged with anything yet, we will do the following. First note that $\{uv\}$ must be a singleton, say in the color blue. If $x$ is not incident to any blue edge then we just merge $P$ with $\{uv\}$ and say the new part hits the additional pair $(x, \text{blue})$. See Figure  i.e.\ \ref{fig:P}(5). Otherwise let $xy$ be the (necessarily unique) blue edge incident with $x$. Now if there is no blue edge $zw$ such that the two edges $yz, yw$ are the same color (like in Figure~i.e.\ \ref{fig:P}(7)), then we will merge the three parts $P=\{ux, xv\}, \{uv\}, \{xy\}$. See Figure~i.e.\ \ref{fig:P}(6). This new part does not hit any additional pairs. Finally in the case where there is a (necessarily unique) blue edge $zw$ such that the two edges $yz, yw$ are the same color, we merge the six parts $P=\{ux, xv\}, \{wy, yz\}, \{uv\}, \{xy\}, \{zw\}$. This new part does not hit any additional pairs. After completing all these merging operations we arrive at our partition $\mc{P}$. Every part of  $\mc{P}$ is isomorphic (where an ``isomorphism'' allows permuting colors) to one of the subfigures in Figure~\ref{fig:P}.

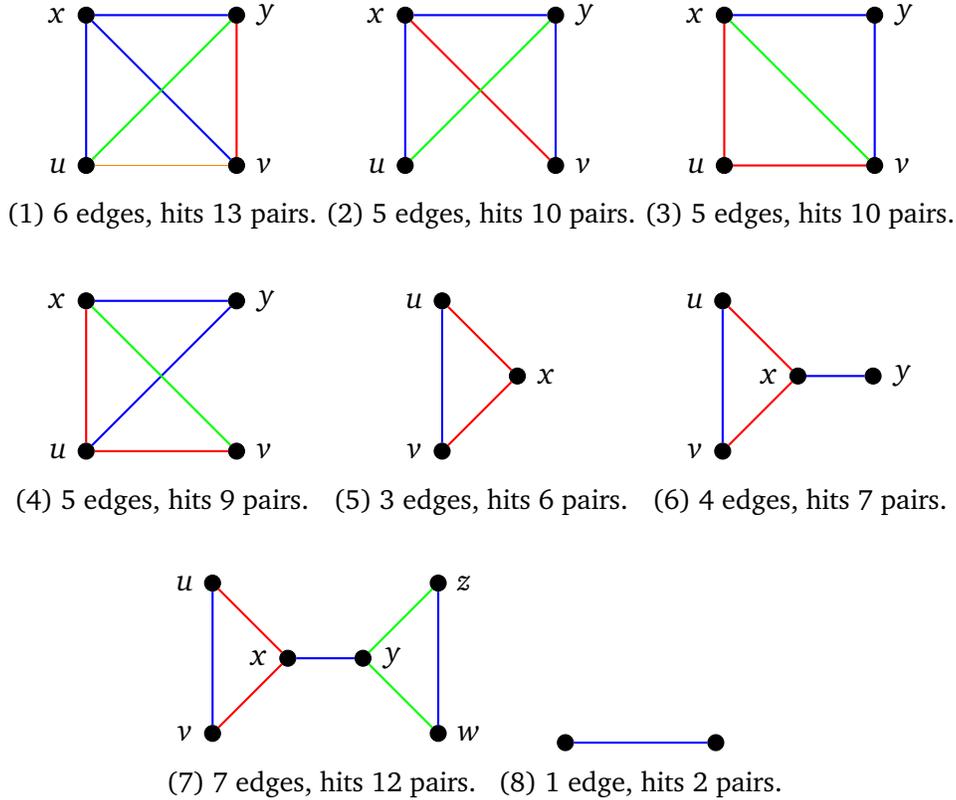
\begin{figure}[ht]
\begin{center}
\begin{subfigure}[b]{.25\textwidth}
\centering
\begin{tikzpicture}
	\node (u) at (0,0) [vert, label=left:$u$] {};
	\node (v) at (2,0) [vert, label=right:$v$] {};
	\node (x) at (0,2) [vert, label=left:$x$] {};
	\node (y) at (2,2) [vert, label=right:$y$] {};
	\draw [thick,blue] (x) -- (v);
	\draw [thick,blue] (x) -- (u);
    \draw [thick, blue] (x) -- (y);
    \draw [thick, red] (y) -- (v);
    \draw [thick, green] (y) -- (u);
    \draw [thick, orange] (v) -- (u);
\end{tikzpicture}
\caption{6 edges, hits 13 pairs.}
\vspace{3ex}
\end{subfigure}
\begin{subfigure}[b]{.25\textwidth}
\centering
\begin{tikzpicture}
	\node (u) at (0,0) [vert, label=left:$u$] {};
	\node (v) at (2,0) [vert, label=right:$v$] {};
	\node (x) at (0,2) [vert, label=left:$x$] {};
	\node (y) at (2,2) [vert, label=right:$y$] {};
	\draw [thick,red] (x) -- (v);
	\draw [thick,blue] (x) -- (u);
    \draw [thick,blue] (x) -- (y);
    \draw [thick,blue] (y) -- (v);
    \draw [thick,green] (y) -- (u);
\end{tikzpicture}
\caption{5 edges, hits 10 pairs.}
\vspace{3ex}
\end{subfigure}
\begin{subfigure}[b]{.25\textwidth}
\centering
\begin{tikzpicture}
	\node (u) at (0,0) [vert, label=left:$u$] {};
	\node (v) at (2,0) [vert, label=right:$v$] {};
	\node (x) at (0,2) [vert, label=left:$x$] {};
	\node (y) at (2,2) [vert, label=right:$y$] {};
	\draw [thick,red] (u) -- (v);
	\draw [thick,red] (u) -- (x);
    \draw [thick,blue] (x) -- (y);
    \draw [thick,blue] (y) -- (v);
    \draw [thick,green] (x) -- (v);
\end{tikzpicture}
\caption{5 edges, hits 10 pairs.}
\vspace{3ex}
\end{subfigure}
\begin{subfigure}[b]{.25\textwidth}
\centering
\begin{tikzpicture}
	\node (u) at (0,0) [vert, label=left:$u$] {};
	\node (v) at (2,0) [vert, label=right:$v$] {};
	\node (x) at (0,2) [vert, label=left:$x$] {};
	\node (y) at (2,2) [vert, label=right:$y$] {};
	\draw [thick,red] (u) -- (v);
	\draw [thick,red] (u) -- (x);
    \draw [thick,blue] (x) -- (y);
    \draw [thick,blue] (y) -- (u);
    \draw [thick,green] (x) -- (v);
\end{tikzpicture}
\caption{5 edges, hits 9 pairs.}
\vspace{3ex}
\end{subfigure}
\begin{subfigure}[b]{.25\textwidth}
\centering
\begin{tikzpicture}
    \node (u) at (0,2) [vert, label=left:$u$] {};
	\node (v) at (0,0) [vert, label=left:$v$] {};
	\node (x) at (1,1) [vert, label=right:$x$] {};
	\draw [thick,blue] (u) -- (v);
	\draw [thick,red] (u) -- (x);
 	\draw [thick,red] (x) -- (v);
\end{tikzpicture}
\caption{3 edges, hits 6 pairs.}
\vspace{3ex}
\end{subfigure}
\begin{subfigure}[b]{.25\textwidth}
\centering
\begin{tikzpicture}
	\node (u) at (0,2) [vert, label=left:$u$] {};
	\node (v) at (0,0) [vert, label=left:$v$] {};
	\node (x) at (1,1) [vert, label=left:$x$] {};
	\node (y) at (2,1) [vert, label=right:$y$] {};
	\draw [thick,red] (x) -- (u);
	\draw [thick,red] (x) -- (v);
    \draw [thick,blue] (u) -- (v);
    \draw [thick,blue] (y) -- (x);
\end{tikzpicture}
\caption{4 edges, hits 7 pairs.}
\vspace{3ex}
\end{subfigure}
\begin{subfigure}[b]{.25\textwidth}
\centering
\begin{tikzpicture}
	\node (u) at (0,2) [vert, label=left:$u$] {};
	\node (v) at (0,0) [vert, label=left:$v$] {};
	\node (x) at (1,1) [vert, label=left:$x$] {};
	\node (y) at (2,1) [vert, label=right:$y$] {};
	\node (z) at (3,2) [vert, label=right:$z$] {};
	\node (w) at (3,0) [vert, label=right:$w$] {};
	\draw [red] (x) -- (u);
	\draw [red] (x) -- (v);
    \draw [blue] (u) -- (v);
    \draw [blue] (y) -- (x);
    \draw [green] (z) -- (y);
    \draw [green] (y) -- (w);
    \draw [blue] (z) -- (w);
\end{tikzpicture}
\caption{7 edges, hits 12 pairs.}
\end{subfigure}
\begin{subfigure}[b]{.25\textwidth}
\centering
\begin{tikzpicture}
	\node (u) at (0,0) [vert] {};
	\node (v) at (2,0) [vert] {};
	\draw [blue] (u) -- (v);
\end{tikzpicture}
\caption{1 edge, hits 2 pairs.}
\end{subfigure}
\end{center}
\caption{Representations of all possible parts of $\mc{P}$.}
\label{fig:P}
\end{figure}

For $i=1, \ldots, 8$ let $x_i$ be the number of parts in $\mc P$ that look like Figure~\ref{fig:P}$(i)$.
Then summing the edges we have
\begin{equation*}
    6x_1 + 5 x_2 + 5x_3 + 5x_4+ 3x_5 + 4x_6 + 7x_7 + x_8 = \binom n2.
\end{equation*}

Let $H_i$ be the subset of all pairs $(v,c)$ that are hit by a part of $\mc{P}$ of type $(i)$ in Figure~\ref{fig:P}. Notice that the total number of pairs $(v,c)$ hit is $|\cup_{i=1}^8 H_i| \le n|C|$. Since each pair $(v, c)$ is hit by at most one part of $\mc{P}$, $H_i \cap H_j = \emptyset$ for each $i \not = j$ and so the total number of $(v,c)$ hit is just the sum of the $|H_i|$ for $i=1,\ldots,8$. Further, $|H_i|$ can be found by referring to Figure~\ref{fig:P}: $|H_1|=13x_1, \ldots |H_8|=2x_8$. Thus,
\begin{equation}\label{eqn:lowerbound1}
13x_1 + 10x_2 + 10x_3 + 9x_4 + 6x_5 + 7x_6 + 12x_7 + 2x_8 \le n|C|.
\end{equation}
But then multiplying the first equation by $12/7$ and subtracting from the second one yields
\begin{align}
    n|C| - \frac{12}{7} \binom n2&\ge 13x_1 + 10x_2 + 10x_3 + 9x_4 + 6x_5 + 7x_6 + 12x_7 + 2x_8\nn\\
    & \qquad - \frac{12}{7} \left( 6x_1 + 5 x_2 + 5x_3 + 5x_4+ 3x_5 + 4x_6 + 7x_7 + x_8\right) \nn\\
    & = \frac{19}{7} x_1 + \frac{10}{7} x_2 + \frac{10}{7} x_3 + \frac{3}{7} x_4 + \frac{6}{7} x_5 + \frac{1}{7} x_6 + \frac{2}{7} x_8 \ge 0.\label{eqn:lowerbound2}
\end{align}
Thus, $|C| \ge \frac 1n \cdot \frac{12}{7} \binom n2 = \frac 67 (n-1)$, as required.

\section{Preliminaries for the upper bound}

\subsection{Intuition}
In this section we describe the construction for the upper bound in Theorem \ref{thm:main}.
In order to come close to the lower bound, the inequalities in lines \eqref{eqn:lowerbound1} and \eqref{eqn:lowerbound2} must be nearly tight. For \eqref{eqn:lowerbound2} to be nearly tight means that $x_7$ (the only variable absent from the line above \eqref{eqn:lowerbound2}) must be quadratic and the rest of the $x_i$ must be subquadratic, i.e.\ almost every part in the partition $\mathcal{P}$ is of the type shown in Figure~\ref{fig:P}(7). This gives us the idea that a good coloring might be obtained by ``packing'' sets of colored edges that look like Figure~\ref{fig:P}(7). And so we have essentially derived our gadget! However we will have to pack this gadget carefully as we will see. We call the colored graph in Figure~\ref{fig:P}(7) the \emph{gadget}, where the colors red, blue and green can be permuted or swapped with any other colors. 

We will need to take care how the gadgets in our packing intersect. We need almost all the edges of $K_n$ to be covered by edge-disjoint gadgets, and if we are careless we might end up with two gadgets intersecting as shown in Figure~\ref{fig:intersect}(2). In that figure the red and blue edges come from one gadget, and the orange and gray from a second gadget. If the two gadgets intersect as shown we see a $(5, 8)$-violation. To avoid this kind of complication we will only allow a pair of gadgets to share a pair of vertices if that pair is a nonedge in both gadgets, as shown in Figure~\ref{fig:intersect}(3). In order to ensure we can still cover almost all edges of $K_n$ with our gadgets, we will need to make sure that there are not too many of these pairs of vertices that get used as nonedges of gadgets (such pairs will never be covered by our packing). Thus, before we try to get our packing, we will designate a small set of edges to be our {\it shareable edges} (see Figure~\ref{fig:intersect}(1)). When we pack a gadget, we will make sure the non-edges of the gadget are all shareable, and the edges of the gadget are not shareable. This way we can allow two gadgets to share two vertices as long as this pair of vertices is a shareable edge and therefore a non-edge in each of the gadgets. We will call this gadget in Figure~\ref{fig:intersect}(1) $\Gamma$. We will define $\Gamma$ more formally later, but it will be a set of edges colored according to the pattern shown in Figure~\ref{fig:intersect}(1) but with the colors possibly permuted, with the dotted edges always being sharable. The formal definition of $\Gamma$ will also involve additional rules about how the colors are chosen, which we start to describe next. 

\begin{figure}[ht]
\begin{center}
\begin{subfigure}[b]{.31\textwidth}
\centering
\begin{tikzpicture}
\begin{scope}[scale=1.8,yscale=1,rotate=90, shift={(0,0)}]
	\DrawGammaSpecial{blue}{red}{green}{}
\end{scope}
\end{tikzpicture}
\caption{A gadget with shareable edges shown as dashed lines.}
\vspace{3ex}
\end{subfigure}
\hfill
\begin{subfigure}[b]{.31\textwidth}
\centering
\begin{tikzpicture}
\begin{scope}[scale=.9, rotate=-90]
\begin{scope}[shift={(0,0)}, rotate=90]
	\DrawGamma{blue}{red}{green}{}
\end{scope}
\begin{scope}[scale=1.93185,shift={(-1.45,.13)}, rotate=90+75]
	\DrawGamma{gray}{orange}{cyan}{}
\end{scope}
\end{scope}
\end{tikzpicture}
\caption{A forbidden intersection of gadgets.}
\vspace{3ex}
\end{subfigure}
\hfill
\begin{subfigure}[b]{.31\textwidth}
\centering
\begin{tikzpicture}
	\begin{scope}[scale=1.4, rotate=0]
    	\begin{scope}[shift={(0,0)}, rotate=90]
    		\DrawGamma{blue}{red}{green}{}
    	\end{scope}
    	\begin{scope}[scale=1,shift={(.5,.87)}, rotate=90]
    		\DrawGamma{gray}{orange}{cyan}{}
    	\end{scope}
	\end{scope}
\end{tikzpicture}
\caption{An allowed intersection of gadgets.}
\vspace{3ex}
\end{subfigure}
\end{center}
\caption{Intersecting gadgets.}
\label{fig:intersect}
\end{figure}
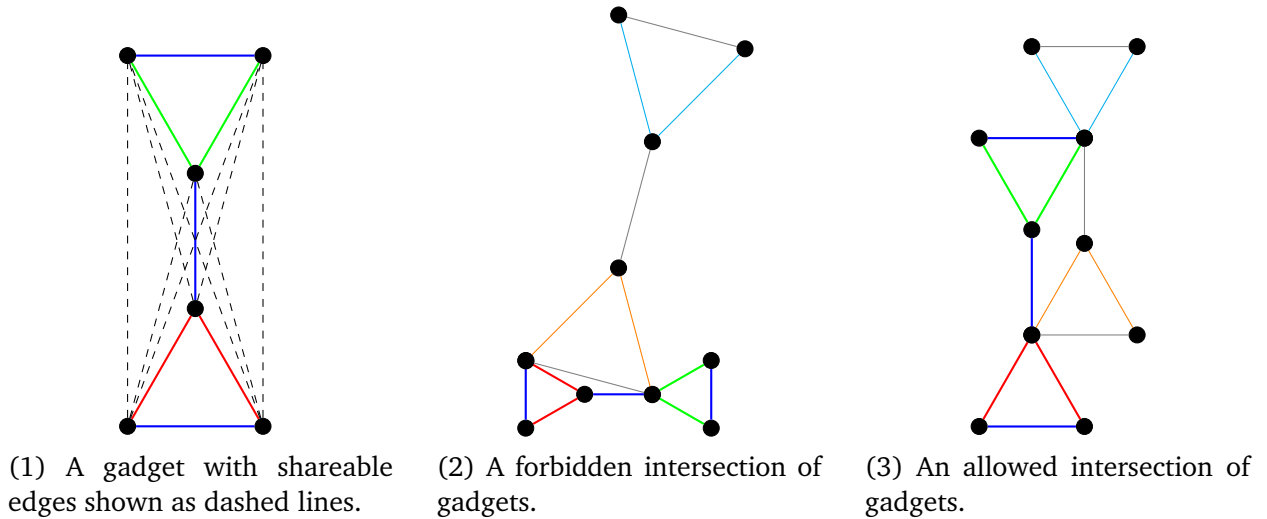

We still need to guarantee that \eqref{eqn:lowerbound1} is nearly tight, meaning that almost every pair in $V(K_n) \times C$ is hit. Recall (see Figure~\ref{fig:P}(7)) that each gadget hits 12 pairs. If we are to use this gadget in our packing though, we must make sure that $y$ never touches any red edge from another gadget or else we will not get a $(5, 8)$-coloring. In order to guarantee that we can still hit almost every pair, we must make sure there are not too many colors that play this role for $y$ (i.e.\ colors that appear in a gadget involving $y$ but not actually touching $y$, like the color red in Figure~\ref{fig:P}(7)). Thus, before we try to get our packing we designate for each vertex a small set of {\it non-touching colors} which are allowed to play this role. Whenever we pack a gadget using the vertex $y$, the colors that do not touch $y$ will be chosen from $y$'s non-touching colors, and the colors that do touch $y$ will be from among $y$'s touching colors (i.e.\ the rest of the colors). 

We will obtain a $(5, 8)$-coloring of $K_n$ via some tools that were developed very recently by Glock, Joos, Kim, K\"uhn and Lichev \cite{GJKKL} and independently by Delcourt and Postle \cite{DP22}. The two versions of these tools in \cite{DP22, GJKKL} are similar but different, and here we actually use both of them.

\subsection{Tools and notation}

We use many of the definitions from Joos and Mubayi \cite{JM22}. 

Let $\mc{H}$ be a hypergraph. If all the edges of $\mc{H}$ have size $k$ we say $\mc{H}$ is \emph{$k$-uniform}, and if all edges have size at most $k$ we say $\mc{H}$ is \emph{$k$-bounded}. A \emph{matching} in $\mc{H}$ is a set of pairwise disjoint edges. We denote by $\mc{H}^{(j)}$ the hypergraph on the same vertex set as $\mc{H}$ and with all the edges of size $j$.
For $v \in V(\mc{H})$, we denote by $\mc{H}_v$ the set of all $e\setminus\{v\}$ such that  $v \in e\in \mc{H}$.
For a hypergraph $\mc{H}$ and a vertex $v \in V(\mc{H})$, 
we denote by $d_\mc{H}(v)$ the \emph{degree} of $v$; 
that is, the number of edges containing $v$.
We use $\Delta(\mc{H})=\max_{u\in V(\mc{H})}d_\mc{H}(u)$ to denote the \emph{maximum degree} of $\mc{H}$.
Similarly, we define the \emph{minimum degree} $\delta(\mc{H})$ of $\mc{H}$.
For $j\geq 2$, we denote by $\Delta_j(\mc{H})$ the maximum number of edges that contain a particular set of $j$ vertices.

We say that $\mc{C}$ is a \emph{conflict system} for a hypergraph $\mc{H}$ if $V(\mc{C}) = E(\mc{H})$. 
In this case, we call the edges of $\mc{C}$ \emph{conflicts}. We say that a matching $\mc{M}\subseteq E(\mc{H})$ is \emph{$\mc{C}$-free} if $\mc{M}$ does not contain any $C\in \mc{C}$ as a subset.

We will use the following adjusted version of the theorem that appears in Joos and Mubayi's paper \cite{JM22}. It is a simplified version of Theorem~3.3 in \cite{GJKKL}.

\begin{theorem}[{\cite[Theorem~2.1]{JM22}}]\label{thm:A}
	For all $k,\ell\geq 2$, there exists $\eps_0>0$ such that for all $\eps\in(0,\eps_0)$, there exists $d_0$ such that the following holds for all $d\geq d_0$.
	Suppose $\mc{H}$ is a $k$-uniform hypergraph on $n\leq \exp(d^{\eps^3})$ vertices and suppose the following conditions are satisfied:
 \begin{enumerate}[label=(H\arabic*A)]
\item $\Delta_2(\mc{H}) \le d^{1-\eps}$, and \label{cond:h1a}
\item $(1-d^{-\eps})d\leq \delta(\mc{H})\leq\Delta(\mc{H})\leq d$. \label{cond:h2a}
\end{enumerate}
Suppose further that $\mc{C}$ is an $\ell$-bounded conflict system for $\mc{H}$ such that $|C|\geq 3$ for all $C\in \mc{C}$, and satisfying the following conditions:
 \begin{enumerate}[label=\textup{(C\arabic*A)}]
		\item\label{cond:c1a} $\Delta(\mc{C}^{(j)})\leq \ell d^{j-1}$ for all $3\leq j \leq \ell$, and 
		\item\label{cond:c2a} $\Delta_{j'}(\mc{C}^{(j)})\leq d^{j-j'-\eps}$ for all $2 \le j' < j \le \ell$.
\end{enumerate}
Then, there exists a $\mc{C}$-free matching $\mc{M}\subset \mc{H}$ of size at least $(1-d^{-\eps^3})n/k$
\end{theorem}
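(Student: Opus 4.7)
The plan is to follow the continuous-time random greedy approach introduced by Glock, Joos, Kim, K\"uhn and Lichev \cite{GJKKL} (which is essentially what \cite{JM22} invokes as Theorem~2.1). We assign to every edge $e \in \mc{H}$ an independent exponential clock value $T_e$ and run the following process: going through edges in increasing order of $T_e$, include $e$ in the matching $\mc{M}$ provided that (i) no vertex of $e$ is already covered by an earlier matching edge, and (ii) including $e$ does not complete any conflict $C \in \mc{C}$. Reparametrizing time so that the process runs over a suitable interval $[0, t^\ast]$, the goal is to show that the matching at time $t^\ast$ has size at least $(1 - d^{-\eps^3}) n/k$ with high probability, which immediately yields the theorem.

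The analytic core is tracking, throughout the process, a family of random variables and showing each stays close to a deterministic trajectory predicted by a differential equation. The two key families are $H_v(t)$, the number of edges through $v$ that are still ``alive'' (i.e.\ not in $\mc{M}$, not incident to $\mc{M}$, and not poised to complete a conflict), and, for each alive edge $e$ and each conflict size $j$, a variable $C_e^{(j)}(t)$ counting size-$j$ conflicts containing $e$ whose remaining edges are all alive. Each such variable should decay on the scale of its initial value times an explicit decreasing function of $t$. Condition \ref{cond:h2a} gives the correct initial value $H_v(0) \in [(1-d^{-\eps})d, d]$, and condition \ref{cond:h1a} guarantees that covering a single vertex changes any $H_v$ by at most $d^{1-\eps}$, so one-step changes are mild enough for Freedman-type bounded-difference concentration to produce deviations of size $d^{1-\Omega(\eps)}$.

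The hypotheses on $\mc{C}$ are precisely what is needed to control the conflict correction to this trajectory. Condition \ref{cond:c1a} bounds the number of size-$j$ conflicts through any edge by $\ell d^{j-1}$, the natural scale, so the rate at which alive edges are killed by conflict completions is correctly of order $d$ and matches the rate at which edges die from matching expansion. Condition \ref{cond:c2a} is the crucial tail bound: for any $j'$ specified edges, the number of size-$j$ conflicts extending them is at most $d^{j-j'-\eps}$, which means that, summed over all conflicts in which a given pair of alive edges might jointly appear, the aggregate conflict effect on the evolution of $H_v$ is a lower-order perturbation of the main matching dynamics. In effect, this codegree condition ensures that conflicts look random enough that they do not create cascading correlations between the trackers.

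The main obstacle, as always with such processes, is exactly this last point: propagating the trajectory estimates from one time instant to the next while keeping the conflict correction negligible relative to the main term. Without \ref{cond:c2a} one could have ``bottleneck'' clusters of edges whose simultaneous availability has an anomalously large effect, and the ODE predictions would drift. Once $H_v(t)$ has been shown to concentrate uniformly over $[0, t^\ast]$, a union bound over the $n$ vertices and the polynomially many tracked conflict variables (which is where the subexponential cap $n \le \exp(d^{\eps^3})$ enters through the failure probability), together with integrating the ODE up to $t^\ast$, shows that the fraction of vertices left uncovered by $\mc{M}$ is at most $d^{-\eps^3}$, giving a $\mc{C}$-free matching of size at least $(1 - d^{-\eps^3}) n/k$ as required.
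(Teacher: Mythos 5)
This statement is not proved in the paper at all: it is imported verbatim as a black box, cited as Theorem~2.1 of \cite{JM22}, which is in turn a simplified form of Theorem~3.3 of \cite{GJKKL}. So there is no internal proof to compare against, and any attempt to ``prove'' it here is really an attempt to reconstruct the argument of \cite{GJKKL}. Your outline does correctly identify that argument's skeleton --- a random greedy matching process, tracking of vertex-availability counts and of counts of partially completed conflicts, and a differential-equation/martingale analysis in which \ref{cond:h1a} and \ref{cond:c2a} control one-step changes and codegree-type correlations while \ref{cond:c1a} sets the scale of the conflict-removal rate.

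However, what you have written is a description of a proof strategy, not a proof, and the gap is precisely the part you label ``the main obstacle.'' You never specify the deterministic trajectories the tracked variables are supposed to follow (in \cite{GJKKL} these are explicit functions of time involving the conflict structure), never set up the super/submartingales or apply Freedman's inequality with quantified error terms, and never carry out the induction over time intervals that propagates the trajectory estimates while keeping the conflict correction lower order --- you simply assert that \ref{cond:c2a} ``ensures'' this. The hypothesis $|C|\ge 3$ for all $C\in\mc{C}$, which is essential (size-$2$ conflicts would delete edges at a rate comparable to the matching dynamics itself and break the trajectory ansatz), is never used in your argument. As it stands the proposal would be acceptable as a pointer to the literature, but as a self-contained proof it defers all of the quantitative content; if the intent is to rely on \cite{GJKKL}, the correct move is to cite it, as the paper does.
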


For a hypergraph $\mc{H}$ and a partition $V(\mc{H}) = X \cup Y$, we say that $\mc{H}$ is \emph{bipartite} and has \emph{bipartition} $X \cup Y$ if every edge of $\mc{H}$ contains exactly one vertex of $X$. In this case we say that a matching $\mc{M}$ in $\mc{H}$ is $X$-perfect if every vertex of $X$ is contained in some edge in $\mc{M}$. 

\begin{theorem}[{\cite[Theorem~1.16]{DP22}}]\label{thm:B}
For all  $k, \ell \ge 2$ and real $\eps \in  (0, 1)$, there exist an integer $d_0 > 0$ and real $\alpha > 0$ such that following holds for all $d \ge d_0$. Let $\mc{H}$ be a bipartite $k$-bounded hypergraph  with bipartition $X \cup Y$ such that
\begin{enumerate}[label=(H\arabic*B)]
\item $\Delta_2(\mc{H}) \le d^{1-\eps}$, and \label{cond:h1b}
\item every vertex in $X$ has degree at least $(1+d^{-\alpha})d$ and every vertex in $Y$ has degree at most $d$. \label{cond:h2b}
\end{enumerate}
Suppose further that $\mc{C}$ is an $\ell$-bounded conflict system for $\mc{H}$ satisfying the following conditions:
\begin{enumerate}[label=(C\arabic*B)]
\item  $\Delta(\mc{C}^{(j)}) \le \alpha \cdot d^{j-1} \log d$ for all $2\le j \le \ell$, \label{cond:c1b}
\item $\Delta_{j'}(\mc{C}^{(j)}) \le d^{j-j'-\eps}$ for all $2 \le j' < j \le \ell$, \label{cond:c2b}
\item for all $v \in V(\mc{H})$ and $e \in E(\mc{H})$ such that $v \notin e$, we have
\[
\big|\big\{e' \in E(\mc{H}) : e' \ni v \mbox{ and } \{e, e'\}\in \mc{C}\big\}\big| \le d^{1-\eps}, \mbox{  and}
\]\label{cond:c3b}
\item for all $e, e' \in E(\mc{H})$ we have
\[
\big|\big\{e'' \in E(\mc{H}) : \{e, e''\}\in \mc{C} \mbox{ and } \{e', e''\}\in \mc{C}\big\}\big| \le d^{1-\eps}.
\] \label{cond:c4b}
\end{enumerate}
Then there exists an $X$-perfect $\mc{C}$-free matching of $\mc{H}$.
\end{theorem}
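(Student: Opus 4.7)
The plan is to prove Theorem~\ref{thm:B} via a semi-random ``nibble'' process followed by an absorption step, which is the strategy Delcourt and Postle developed for conflict-free matchings. Relative to Theorem~\ref{thm:A}, the point of Theorem~\ref{thm:B} is that it guarantees an \emph{$X$-perfect} matching in the bipartite setting, not merely one covering all but a vanishing fraction of vertices, at the cost of the asymmetric degree condition $(1+d^{-\alpha})d$ on $X$-vertices and the extra conflict controls \ref{cond:c3b}--\ref{cond:c4b}. I would proceed in two phases corresponding exactly to the nibble and to the absorption.

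First, in a R\"odl-type nibble of $O(\log d)$ rounds, activate every surviving edge independently with a small probability $\gamma$, then commit those activated edges that are pairwise vertex-disjoint and do not close any conflict of $\mc{C}$. Conditions \ref{cond:h1b} and \ref{cond:c1b}--\ref{cond:c2b} are the standard quasi-randomness inputs needed to prove, via Freedman's martingale inequality, that after each round the residual hypergraph and its conflict system retain rescaled versions of the same properties. The essential invariant to maintain through the rounds is that the ratio of each $X$-vertex's degree to the corresponding $Y$-vertex degrees stays at least $1+\Omega(d^{-\alpha})$; this is preserved because every $X$-vertex started with an $\Omega(d^{-\alpha} d)$ excess and the nibble treats both sides symmetrically in expectation. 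The outcome of this phase is a $\mc{C}$-free matching covering all but a set $X' \subseteq X$ of vanishing density.

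Second, I would absorb $X'$ using the remaining edges. By the degree invariant above, each $v \in X'$ still sees many candidate edges into the residual hypergraph, and conditions \ref{cond:c3b} and \ref{cond:c4b} ensure that the conflicts accumulated during the nibble do not block these extensions for more than a vanishing fraction of edges through any such $v$: specifically, \ref{cond:c3b} forbids many edges through a fixed vertex from conflicting with any single committed edge, and \ref{cond:c4b} forbids many edges from simultaneously conflicting with each of two committed edges. A Hall-type matching argument, or equivalently a second localized nibble restricted to the bipartite graph induced by $X'$ and its remaining neighborhood, then covers every residual $X$-vertex without triggering a conflict.

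The main obstacle, and the genuine novelty over Theorem~\ref{thm:A}, is this exact completion: one needs a \emph{per-vertex} lower bound on extendable edges (not just a global count) throughout the nibble, which forces a second layer of concentration running alongside the primary one and is exactly what \ref{cond:c3b}--\ref{cond:c4b} are designed to preserve. Since Delcourt and Postle already carried this through in full generality, I would invoke their main technical lemmas as black boxes and only verify that the hypotheses \ref{cond:h1b}--\ref{cond:c4b} feed into their framework as stated.
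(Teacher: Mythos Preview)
The paper does not prove Theorem~\ref{thm:B} at all: it is quoted verbatim as \cite[Theorem~1.16]{DP22} and used as a black box. So there is no ``paper's own proof'' to compare against in the strict sense.

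That said, the paper does briefly describe the structure of Delcourt and Postle's proof (see the last paragraph of Section~\ref{sec:overview}): a nibble phase covering almost all of $X$, followed by a second phase in which \emph{each remaining $X$-vertex chooses a random edge and the Local Lemma is used} to show this extends to a full $X$-perfect conflict-free matching. Your first phase matches this. Your second phase does not: you propose a Hall-type argument or a second localized nibble, whereas the actual proof in \cite{DP22} is a randomized one-shot extension certified by the Lov\'asz Local Lemma. Conditions \ref{cond:c3b} and \ref{cond:c4b} are there precisely to bound the dependency degree in that Local Lemma application---\ref{cond:c3b} controls how many choices at one leftover vertex conflict with a fixed committed edge, and \ref{cond:c4b} controls pairwise interactions between the random choices at two leftover vertices---rather than to feed a Hall-type expansion bound. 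Your sketch is plausible in spirit, but since the theorem is simply cited, the appropriate move here is just to invoke \cite{DP22} and not attempt an independent proof.
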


We will also use McDiarmid's inequality. 

\begin{theorem}[McDiarmid’s inequality~\cite{M89}]\label{thm:mcd}
Suppose $X_1,\ldots, X_m$ are independent random variables. Suppose X is a real-valued random variable determined by $X_1,\ldots, X_m$ such that changing the outcome of $X_i$ changes $X$ by at most $b_i$ for all $i \in [m]$. Then, for all $t > 0$, we
have
$$
P\left(|X-\E[X]| \ge t\right) \le 2 \exp\left( -\frac{2t^2}{\sum_{i \in [m]}b_i^2}\right).
$$
\end{theorem}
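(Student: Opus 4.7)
The plan is to prove McDiarmid's inequality via the classical Doob martingale argument combined with Hoeffding's lemma and a Chernoff bound. First I would set up the Doob martingale $(Z_i)_{i=0}^m$ defined by $Z_i = \mathbb{E}[X \mid X_1, \ldots, X_i]$, with filtration $\mathcal{F}_i = \sigma(X_1, \ldots, X_i)$. Then $Z_0 = \mathbb{E}[X]$ and $Z_m = X$ almost surely, so we have the telescoping decomposition $X - \mathbb{E}[X] = \sum_{i=1}^m D_i$, where $D_i := Z_i - Z_{i-1}$ is a martingale difference sequence with $\mathbb{E}[D_i \mid \mathcal{F}_{i-1}] = 0$.

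The crux is to control the conditional range of each $D_i$. Working on the event $\{X_1 = x_1, \ldots, X_{i-1} = x_{i-1}\}$, define the $\mathcal{F}_{i-1}$-measurable quantities
\[
A_i = \inf_{x} \mathbb{E}[X \mid X_1 = x_1, \ldots, X_{i-1} = x_{i-1}, X_i = x], \quad B_i = \sup_{x} \mathbb{E}[X \mid X_1 = x_1, \ldots, X_{i-1} = x_{i-1}, X_i = x].
\]
Independence of the $X_j$ lets us rewrite each such conditional expectation as an expectation of $X(x_1,\ldots,x_{i-1},x,X_{i+1}',\ldots,X_m')$ over independent copies $X_j'$ of the later variables; applying the bounded-differences hypothesis pointwise inside this expectation gives $B_i - A_i \le b_i$. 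Since $Z_i$ lies in $[A_i, B_i]$ and $Z_{i-1} = \mathbb{E}[Z_i \mid \mathcal{F}_{i-1}]$ lies in the same interval, the conditional range of $D_i$ given $\mathcal{F}_{i-1}$ is at most $b_i$.

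With this in hand, I would apply Hoeffding's lemma conditionally: any zero-mean random variable supported in an interval of length $L$ satisfies $\mathbb{E}[e^{sY}] \le \exp(s^2 L^2/8)$, so $\mathbb{E}[e^{s D_i} \mid \mathcal{F}_{i-1}] \le \exp(s^2 b_i^2 / 8)$. Iterating the tower property then gives $\mathbb{E}[e^{s(X - \mathbb{E}[X])}] \le \exp\!\bigl(s^2 \sum_i b_i^2 / 8\bigr)$, and the Chernoff bound $\mathbb{P}(X - \mathbb{E}[X] \ge t) \le e^{-st}\, \mathbb{E}[e^{s(X - \mathbb{E}[X])}]$ optimized at $s = 4t / \sum_i b_i^2$ yields $\exp(-2t^2/\sum_i b_i^2)$. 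Running the same argument on $-X$ and taking a union bound delivers the two-sided statement, with the factor of $2$ in front of the exponential.

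The main obstacle is the conditional-range argument in the second step: one must justify carefully that the range is $b_i$ rather than the naive pointwise bound of $2b_i$, which relies critically on the independence of the $X_j$ (so that both $A_i$ and $B_i$ can be realized by averaging over the same law on $X_{i+1},\ldots,X_m$). Once that is established, the constant $2$ in the exponent drops out mechanically from Hoeffding's lemma and Chernoff optimization.
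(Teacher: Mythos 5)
The paper does not prove this statement; it is quoted verbatim as a known black-box tool with a citation to McDiarmid's 1989 survey, so there is no in-paper argument to compare against. Your proposal is the standard (and essentially McDiarmid's original) proof: the Doob martingale $Z_i = \mathbb{E}[X \mid X_1,\ldots,X_i]$, the conditional-range bound $B_i - A_i \le b_i$ obtained by averaging the pointwise bounded-differences hypothesis over independent copies of the later coordinates, Hoeffding's lemma applied conditionally, the tower property, and Chernoff optimization at $s = 4t/\sum_i b_i^2$. All the steps are correct, including the subtle point you flag: conditionally on $\mathcal{F}_{i-1}$ the increment $D_i$ lies in the interval $[A_i - Z_{i-1},\, B_i - Z_{i-1}]$ of length at most $b_i$ (not merely $|D_i|\le b_i$, which would only give range $2b_i$), and this is exactly where independence is used. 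The constants check out: $\exp(-st + s^2\sum_i b_i^2/8)$ minimized at $s=4t/\sum_i b_i^2$ gives $\exp(-2t^2/\sum_i b_i^2)$, and the union bound over the two tails supplies the leading factor of $2$.
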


\subsection{Overview of the coloring}\label{sec:overview}

Our construction will start by identifying our set of shareable edges, and non-touching colors for each vertex. Then we would like to color almost all the edges (excluding shareable edges and a few other edges) by packing edge-disjoint gadgets. These gadgets must be packed to obey the rules for shareable edges and the colors chosen to obey the rules for non-touching colors. We will use two disjoint sets of colors $C_{A1}$ and $C_{A2}$ for these gadgets, and the colors in $C_{A1}$ and $C_{A2}$ will play different roles. For $c \in C_{A1}$, $c$ will only be allowed to play the role of blue in Figure~i.e.\ \ref{fig:P}(7), and so the color class for $c$ will consist of vertex-disjoint 1-paths (i.e.\ a matching in $K_n$). For $c \in C_{A2}$, $c$ will only be allowed to play the role of red or green in Figure~i.e.\ \ref{fig:P}(7), so the color class for $c$ will consist of vertex-disjoint 2-paths. In fact, to get our asymptotically optimal bound on the number of colors we will need each color in $C_{A1}$ to induce an almost-perfect matching and each color in $C_{A2}$ to induce a set of 2-paths covering almost all vertices. When we pack the gadgets the colors will also be chosen to obey certain other rules to avoid $(5, 8)$-violations. This part of our construction will be called {\bf Phase A}, and the proof that it works will be an application of Theorem \ref{thm:A}. In particular we will define an auxiliary hypergraph $\mc{H}_A$ and conflict system $\mc{C}_A$ such that a $\mc{C}_A$-free matching in $\mc{H}_A$ corresponds to a partial $(5, 8)$-coloring. The advantage of using Theorem \ref{thm:A} here is that this theorem has some extensions which we will use (see Section \ref{sec:quasirandom}). These extensions allow us to prove that the coloring obtained in Phase A has some nice quasirandom properties which will allow us to do the next phase. 
In {\bf Phase B} we will color all edges that remain uncolored after Phase A. This will be done by applying Theorem \ref{thm:B} to a new hypergraph $\mc{H}_B$ and conflict system $\mc{C}_B$. 

It helps to think of our construction as the output of a random procedure. Indeed, Theorem \ref{thm:A} was proved using a random greedy process that forms a matching by choosing one random edge at a time. Likewise, Theorem \ref{thm:B} was proved using a semirandom ``nibble'' method where many random edges are chosen at a time. 

Both Theorems \ref{thm:A} and \ref{thm:B} have been applied to coloring problems before \cite{BCD23, BDLP22, BHZ23, DP22, GHPSZ23, JM22}. To get a coloring, one  defines an appropriate auxiliary hypergraph where some vertices correspond to the objects being colored and other vertices correspond to colors. Each edge in this hypergraph will contain a few objects and a few colors, and it is interpreted as assigning those colors to those objects. The objects we want to color are the edges of $K_n$, and in Phase A we will color them in ``groups'' (i.e.\ sets of edges forming a gadget). Neither Theorem \ref{thm:A} nor \ref{thm:B} alone lends itself to coloring all of our edges this way, so we will settle for almost all of them. Theorem \ref{thm:A} is better here since it has the extensions we discussed in the previous paragraph. In Phase B we will color the edges individually (i.e.\ not in ``groups'' as in Phase A). Theorem \ref{thm:B} is capable of coloring all the remaining edges with no leftovers. In particular in our application of Theorem \ref{thm:B}, the vertex set $X$ will be the set of uncolored edges and $Y$ will represent colors for those edges. The $X$-perfect matching guaranteed by Theorem \ref{thm:B} will then correspond to a coloring of all remaining edges. The fact that we color the edges individually in Phase B is exactly why we can encode it specifically as a bipartite graph and apply Theorem \ref{thm:B}. 

In several of the recent results on Generalized Ramsey numbers and similar coloring problems \cite{BCD23,BCDP22,  BDE22, BHZ23, DP22, GHPSZ23, JM22}, colorings were also obtained using a two-phase construction where the first phase was a random greedy process or nibble method (either explicitly or implicitly as an application of Theorem \ref{thm:A} or \ref{thm:B}). The second phase in these applications was an application of the Local Lemma. We would have liked to do our Phase B with the Local Lemma too, but as we will see it would not work at least in the straightforward way.  Our conflict system for Phase B is too dense for a naive application of the Local Lemma. Fortunately we have Theorem \ref{thm:B} which can handle the situation. It is worth pointing out that the proof of Theorem \ref{thm:B} is also a kind of two-phase construction: the first phase is a nibble method to get a matching that covers almost all of $X$, and for the second phase each remaining vertex of $X$ chooses a random edge and the Local Lemma is used to show this gives a matching with positive probability. So in the end we are implicitly using the Local Lemma by applying Theorem \ref{thm:B}.

\section{Phase A}

\subsection{Setting up our application of Theorem \ref{thm:A}}

We would like to prove the following theorem (actually we will eventually prove something stronger, see Section \ref{sec:quasirandom}). We will use Theorem \ref{thm:A}.

\begin{theorem}\label{thm:phaseA}
 There is a partial $(5, 8)$-coloring of $K_n$ using at most $\frac 67 n + o(n)$ colors and coloring all but $o(n^{2})$ edges.
\end{theorem}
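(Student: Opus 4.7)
The plan is to apply Theorem~\ref{thm:A} to an auxiliary hypergraph $\mc{H}_A$ encoding a packing of edge-disjoint colored copies of the gadget $\Gamma$ from Figure~\ref{fig:P}(7), subject to the shareable-edge and non-touching-color rules of Section~\ref{sec:overview}. We first fix color palettes $C_{A1}$ and $C_{A2}$ with $|C_{A1}| = |C_{A2}| = \frac{3}{7}n + o(n)$, so that their union has size $\frac{6}{7}n + o(n)$. A quick arithmetic check confirms this suffices: if every $c \in C_{A1}$ is used $\approx n/6$ times as the blue of some gadget and every $c \in C_{A2}$ is used $\approx n/3$ times as its red or green, the resulting $\approx n^2/14$ gadgets cover essentially all $\binom{n}{2}$ edges. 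We then choose random shareable edges by including each $e \in E(K_n)$ in $S$ independently with probability $p = n^{-\alpha}$, and for each $(a,c) \in V(K_n) \times C_{A2}$ include $c$ in $N(a)$ independently with probability $q = n^{-\alpha}$, for a sufficiently small constant $\alpha > 0$. Chernoff and McDiarmid (Theorem~\ref{thm:mcd}) then guarantee that with positive probability $S$ and $\{N(a)\}$ have the needed quasirandom properties (near-uniform sizes, balanced co-degrees on small tuples).

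We now define $\mc{H}_A$ on three vertex classes: (i) the edges of $K_n \setminus S$; (ii) ``blue slots'' $s_{a,c}$ for $(a,c) \in V(K_n) \times C_{A1}$; (iii) ``2-path slots'' $s_{a,c}$ for each $(a,c)$ with $c \in C_{A2} \setminus N(a)$. An edge of $\mc{H}_A$ is a tuple $(\phi, c_b, c_r, c_g)$, where $\phi$ injects $V(\Gamma) = \{u,v,x,y,z,w\}$ into $V(K_n)$, $c_b \in C_{A1}$, and $c_r, c_g \in C_{A2}$, satisfying: (a) the $7$ gadget edges of $\phi(\Gamma)$ lie in $E(K_n)\setminus S$ while the $8$ gadget non-edges among $\phi(V(\Gamma))$ lie in $S$; (b) $c_r \in \bigl(\bigcap_{a \in \phi(\{y,z,w\})} N(a)\bigr) \setminus \bigl(\bigcup_{a \in \phi(\{x,u,v\})} N(a)\bigr)$, and symmetrically $c_g \in \bigl(\bigcap_{a \in \phi(\{x,u,v\})} N(a)\bigr) \setminus \bigl(\bigcup_{a \in \phi(\{y,z,w\})} N(a)\bigr)$. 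The corresponding edge of $\mc{H}_A$ contains the $7$ edge-vertices, the $6$ blue slots at the gadget vertices, and the $3+3$ red/green 2-path slots, giving uniformity $k = 19$. The matching property in $\mc{H}_A$ then automatically enforces that each $c \in C_{A1}$ appears in the final coloring on a partial matching, and each $c \in C_{A2}$ on a vertex-disjoint union of 2-paths.

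Let $\mc{C}_A$ consist of all minimal sets of edges of $\mc{H}_A$ whose simultaneous presence creates a $(5,8)$-violation. A case analysis using (a)--(b) shows that a single gadget or any two gadgets are $(5,8)$-safe: two gadgets can only overlap in a shareable pair (a non-edge of both), and non-touching colors forbid any color from causing a repetition at a gadget vertex it does not touch. Hence every element of $\mc{C}_A$ has at least three edges, meeting the $|C|\geq 3$ hypothesis of Theorem~\ref{thm:A}. The quasirandomness of $S$ and $N$ implies that every vertex of $\mc{H}_A$ has degree in $[(1-o(1))d,\, d]$ for some polynomial $d = d(n)$, and that $\Delta_2(\mc{H}_A) \le d^{1-\Omega(1)}$, verifying \ref{cond:h1a}--\ref{cond:h2a}. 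A more elaborate enumeration of how $j$ gadgets can jointly force a $(5,8)$-violation, conditioned on pinning any $j'<j$ of them, bounds $\Delta(\mc{C}_A^{(j)})$ and $\Delta_{j'}(\mc{C}_A^{(j)})$ to give \ref{cond:c1a}--\ref{cond:c2a}.

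Theorem~\ref{thm:A} then yields a $\mc{C}_A$-free matching $\mc{M}$ in $\mc{H}_A$ of size $(1-o(1))|V(\mc{H}_A)|/19 = (1-o(1))\,n^2/14$. Coloring each gadget in $\mc{M}$ as prescribed produces a partial $(5,8)$-coloring of $K_n$ using at most $|C_{A1}|+|C_{A2}| = \frac{6}{7}n + o(n)$ colors, with only $|S| + o(n^2) = o(n^2)$ edges uncolored, as required. The main obstacle is the conflict-degree bookkeeping for \ref{cond:c2a}: one must enumerate all configurations in which $j$ gadgets jointly produce a $(5,8)$-violation and verify that, after pinning any $j'$ of the $j$ participating gadgets, at most $d^{j-j'-\eps}$ completions remain. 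This requires a delicate combinatorial analysis of gadget intersection patterns and color reuse, substantially more intricate than the analogous analysis for $f(n,4,5)$ carried out in~\cite{BCDP22, JM22}.
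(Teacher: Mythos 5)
Your construction of $\mc{H}_A$ --- the random sharable edges, the random non-touching colors, the $19$-uniform encoding with your conditions (a)--(b), the application of Theorem~\ref{thm:A}, and the final count of colors and colored edges --- is essentially the paper's construction. The gap is in your conflict system. You take $\mc{C}_A$ to be \emph{all} minimal sets of gadgets whose union creates a $(5,8)$-violation and assert that \ref{cond:c1a}--\ref{cond:c2a} then follow from a careful enumeration. They do not: \ref{cond:c2a} genuinely fails for that conflict system. Consider a $5$-set in which four gadgets each contribute one edge of an alternating $2$-colored $4$-cycle (by itself only $2$ repetitions, hence not a violation), completed to $3$ repetitions by two further gadgets contributing two disjoint edges of a third color. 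This $6$-gadget set is a minimal violation, so it lies in your $\mc{C}_A^{(6)}$. But pinning the four $4$-cycle gadgets fixes only $4\cdot 9-6=30$ of the $7\cdot 6+2=44$ vertex/color choices of the configuration, so the number of completions is of order $n^{14}p^{28}=n^{14-28\delta}$, whereas \ref{cond:c2a} requires at most $d_A^{\,2-\eps_A}=n^{14-28\delta-7\eps_A+o(1)}$. The analogous $5$-gadget violations (a triangle gadget plus an alternating $4$-cycle) fail in the same way: pinning the four cycle gadgets leaves $\Theta(d_A)$ completions where $d_A^{\,1-\eps_A}$ is required.

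The missing idea is the paper's Type~1 conflicts: every alternating $2$-colored $4$-cycle is declared a conflict in its own right, \emph{even though it is not a $(5,8)$-violation}, and all remaining conflict types are restricted to configurations containing no alternating $4$-cycle. The $4$-cycles themselves satisfy \ref{cond:c1a}--\ref{cond:c2a}, and once they are excluded the key structural fact --- that fixing any $j'$ gadgets of a conflict fixes at least $7j'+3$ vertices and colors --- holds for every remaining type, which is exactly what makes \ref{cond:c2a} go through. A matching avoiding the $4$-cycles automatically avoids the degenerate violations above, so correctness is preserved. Without this device your verification step would fail, so you need to add it. (Two minor points: your claim that any two gadgets in a matching are safe is correct but relies on the observation that a shared pair of graph vertices must be a non-edge of both gadgets together with the non-touching-color rule; and your equal palette sizes $|C_{A1}|=|C_{A2}|$ are harmless, since the resulting $O(p)$ relative imbalance in the degrees of $\mc{H}_A$ sits well inside the $d_A^{-\eps_A}$ window permitted by \ref{cond:h2a}.)
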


\begin{proof}
 To get the coloring we need, we will  apply Theorem \ref{thm:A} to a carefully constructed hypergraph $\mc{H}_A$ and conflict system $\mc{C}_A$ which we will now start to describe. First we will generate a random set of edges $E' \subseteq E(K_n)$ by removing each possible edge with probability $p:=n^{-\delta}$ independently, for some $\d>0$.  The set of removed edges $E'' := E(K_n) \setminus E'$ will be our shareable edges. Let $C_A$ be a set of $(1-p/2) (6/7)n = \frac 67 n + O(n^{1-\d})$ colors. We split the color set $C_A$ into disjoint sets $C_{A1}$ and $C_{A2}$ where 
\[
|C_{A1}| = (1-p)\frac{3}{7}n \quad \text{and} \quad |C_{A2}| = \frac 37 n. 
\]

We will apply Theorem \ref{thm:A} with $\eps = \eps_A$, and at certain points of our proof we will see inequalities that $\d, \eps_A$ must satisfy. We will say $\d, \eps_A$ are chosen such that 
 \begin{equation}\label{eqn:ed}
     0< \eps_A \ll \d \ll 1.
 \end{equation}
 In other words, if we first choose $\d>0$ small enough, we can then choose $\eps_A>0$ small enough with respect to our choice for $\d$.

We define an auxiliary set of vertices 
\[
V:=\{(v, c): v \in V(K_n), c \in C_A\}.
\]
We let $V' \subseteq V$ be a random subset where for each vertex $(v, c)$ with $c \in C_{A2}$, we remove $(v, c)$ with probability $p$. Removing $(v, c)$ will mean that $c$ is one of the non-touching colors for $v$. 

We define the following graph $\Gamma^*$ (which is just the gadget without colors): $$V(\Gamma^*)=\{\gamma^*_i\}_{i=1, \ldots, 6}  \quad \text{and} \quad E(\Gamma^*)=\bigg\{\gamma^*_1\gamma^*_2,\; \gamma^*_1\gamma^*_3,\; \gamma^*_2\gamma^*_3,\; \gamma^*_3\gamma^*_4,\; \gamma^*_4\gamma^*_5,\; \gamma^*_5\gamma^*_6,\; \gamma^*_4\gamma^*_6 \bigg\}.$$

We are ready to define $\mc{H}_A$. The vertex set will be 
\begin{equation}\label{eqn:VHA}
  V(\mc{H}_A) =  E' \cup V'.  
\end{equation}
Thus for our application of Theorem \ref{thm:A} we will use 
\begin{equation}\label{eqn:nadef}
    n_A := |V(\mc{H}_A)| = |E'|+|V'| = (1+o(1))\rbrac{\binom n 2 + n \cdot \frac 67 n } = (1+o(1))\frac{19}{14}n^2
\end{equation}
(the estimate above holds w.h.p.~when we randomly generate $E', V'$).
$\mc{H}_A$ will be $k_A$-uniform where $k_A=19$, and each edge will have 7 elements from $E'$ and 12 elements from $V'$. There will be a one-to-one correspondence between the edges of this hypergraph and colored copies of $\Gamma^*$ satisfying certain conditions below. 
More precisely $E(\mc{H}_A)$ will consist of all edges of the form
\[
E(\Gamma) \cup \bigg\{ (\gamma_i, c_1)  \bigg\}_{i =1,\ldots, 6}\cup \bigg\{ (\gamma_i, c_2)  \bigg\}_{i = 1,2,3} \cup \bigg\{ (\gamma_i, c_2')  \bigg\}_{i = 4,5,6}
\]
where $\Gamma\subseteq K_n$ is isomorphic to $\Gamma^*$ with each vertex $\gamma_i$ corresponding to $\gamma_i$, such that the following are satisfied:
\begin{enumerate}
    \item \label{item:Hcond1} $E(\Gamma) \subseteq E'$,
    \item \label{item:Hcond2}$E\left(\overline{\Gamma}\right) \subseteq  E(K_n) \setminus E'$, where $\overline{\Gamma}$ is the complement of $\Gamma$,
    \item \label{item:Hcond3}$c_1 \in C_{A1}$ and  $c_2, c_2' \in C_{A2}$,
    \item \label{item:Hcond4}$\bigg\{ (\gamma_i, c_2)  \bigg\}_{i = 1,2,3} \cup \bigg\{ (\gamma_i, c_2')  \bigg\}_{i = 4,5,6} \subseteq V'$
    \item \label{item:Hcond5}$(\gamma_i, c_2') \notin V'$ for $i=1,2,3$ and $(\gamma_i, c_2) \not \in V'$ for $i = 4,5,6$.
\end{enumerate}
\begin{figure}[ht]
\begin{center}
    \begin{tikzpicture}[scale=1.5]
	\node (g1) at (-1.37,.5) [vert, label=above:$\gamma_1$] {};
	\node (g2) at (-1.37,-.5) [vert, label=below:$\gamma_2$] {};
	\node (g3) at (-.5,0) [vert, label=above:$\gamma_3$] {};
	\node (g4) at (.5,0) [vert, label=above:$\gamma_4$] {};
	\node (g5) at (1.37,.5) [vert, label=above:$\gamma_5$] {};
	\node (g6) at (1.37,-.5) [vert, label=below:$\gamma_6$] {};
	
	\draw [thick,blue] (g1) -- node[left] {$c_1$} (g2) ;
	
	\draw [thick,red] (g1) -- node[above] {$c_2$} (g3) ;
	\draw [thick,red] (g2) -- node[below] {$c_2$} (g3) ;
	
	\draw [thick,blue] (g3) -- node[above] {$c_1$} (g4) ;
	
	\draw [thick,green] (g4) -- node[above] {$c_2'$} (g5) ;
	\draw [thick,green] (g4) -- node[below] {$c_2'$} (g6) ;
	
	\draw [thick,blue] (g5) -- node[right] {$c_1$} (g6) ;
\end{tikzpicture}
\caption{A colored copy of $\Gamma$, corresponding to an edge of $\mc{H}_A$.}
\label{fig:G}
\end{center}
\end{figure}
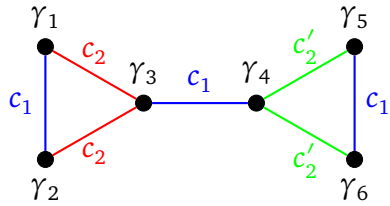

We call the edges of $\mc{H}_A$ {\em gadgets}. Each gadget corresponds to (and we will identify it with) a colored copy of $\Gamma^*$ in $K_n$. The definition of our hypergraph $\mc{H}_A$ is an encoding of these gadgets designed so that a matching in $\mc{H}_A$ will correspond to a partial coloring of $K_n$ which completely avoids many types of $(5, 8)$-violations. However this coloring might still contain other $(5, 8)$-violations, which we must avoid by using our conflict system $\mc{C}_A$, which we define next.

The conflict system $\mc{C}_A$ for $\mc{H}_A$ of course has vertex set $E(\mc{H}_A)$, i.e.\ the set of gadgets. The conflicts (edges) of $\mc{C}_A$ will be certain sets of gadgets which together would create a $(5, 8)$-violation or an \emph{alternating 4-cycle} (a 4-cycle that is alternately colored using 2 colors). We avoid these alternating 4-cycles since their presence impedes the verification of the conditions of Theorem~\ref{thm:A} for the other conflicts.

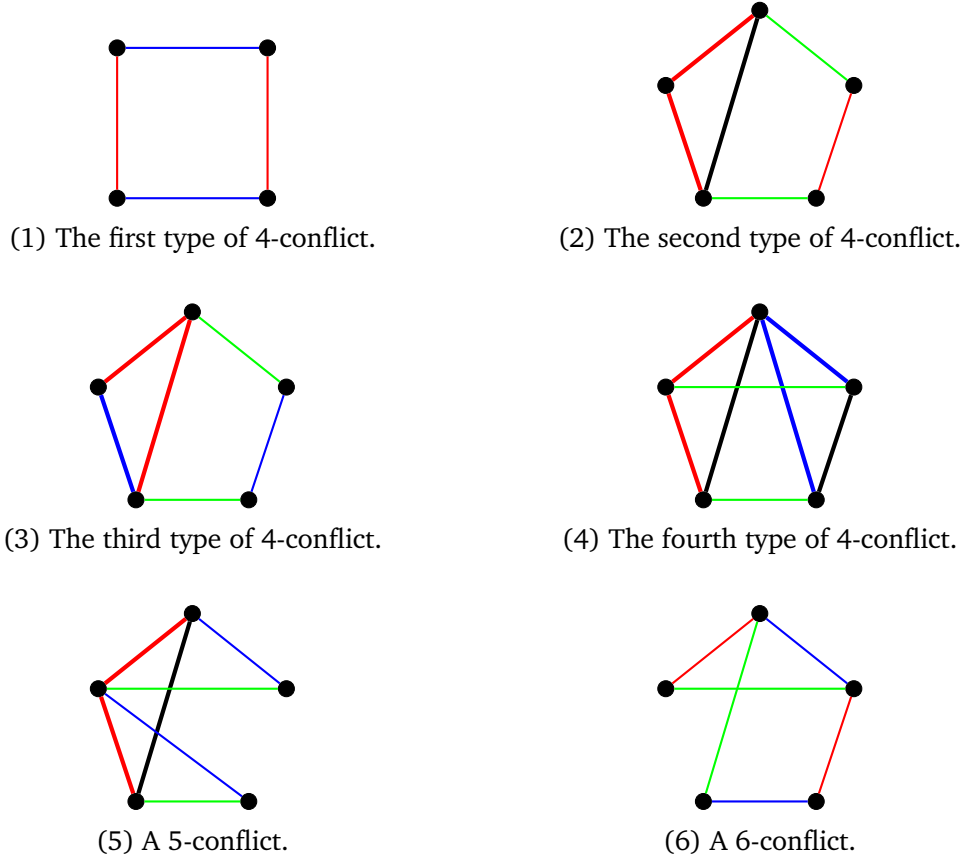
\begin{figure}[ht]
\begin{subfigure}[t]{.45\textwidth}
\centering
    \begin{tikzpicture}
	\node (u) at (0,0) [vert] {};
	\node (v) at (2,0) [vert] {};
	\node (x) at (0,2) [vert] {};
	\node (y) at (2,2) [vert] {};
	\draw [thick,red] (x) -- (u);
	\draw [thick,red] (y) -- (v);
    \draw [thick,blue] (u) -- (v);
    \draw [thick,blue] (y) -- (x);
\end{tikzpicture}
\subcaption{The first type of 4-conflict.}
\end{subfigure}
\vspace{3ex}
\begin{subfigure}[t]{.45\textwidth}
\centering
\begin{tikzpicture}[scale=.5]
\node (1) at (0,4) [vert] {};
\node (2) at (2.5,6) [vert] {};
\node (3) at (5,4) [vert] {};
\node (4) at (4,1) [vert] {};
\node (5) at (1,1) [vert] {};
\draw [red, ultra thick] (1) -- (2);
\draw [red, ultra thick] (1) -- (5);
\draw [black, ultra thick] (2) -- (5);
\draw [thick,green] (2) -- (3);
\draw [thick,green] (5) -- (4);
\draw [thick,red] (3) -- (4);
\end{tikzpicture}
\subcaption{The second type of 4-conflict.}
\end{subfigure}
\vspace{3ex}
\begin{subfigure}[t]{.45\textwidth}
\centering
\begin{tikzpicture}[scale=.5]
\node (1) at (0,4) [vert] {};
\node (2) at (2.5,6) [vert] {};
\node (3) at (5,4) [vert] {};
\node (4) at (4,1) [vert] {};
\node (5) at (1,1) [vert] {};
\draw [red, ultra thick] (1) -- (2);
\draw [red, ultra thick] (2) -- (5);
\draw [blue, ultra thick] (1) -- (5);
\draw [thick,green,] (2) -- (3);
\draw [thick,green] (5) -- (4);
\draw [thick,blue] (3) -- (4);
\end{tikzpicture}
\subcaption{The third type of 4-conflict.}
\end{subfigure}
\begin{subfigure}[t]{.45\textwidth}
\centering
\begin{tikzpicture}[scale=.5]
\node (1) at (0,4) [vert] {};
\node (2) at (2.5,6) [vert] {};
\node (3) at (5,4) [vert] {};
\node (4) at (4,1) [vert] {};
\node (5) at (1,1) [vert] {};
\draw [red, ultra thick] (1) -- (2);
\draw [black, ultra thick] (2) -- (5);
\draw [blue, ultra thick] (2) -- (3);
\draw [blue, ultra thick] (2) -- (4);
\draw [red, ultra thick] (1) -- (5);
\draw [black, ultra thick] (3) -- (4);
\draw [thick,green] (5) -- (4);
\draw [thick,green] (1) -- (3);
\end{tikzpicture}
\subcaption{The fourth type of 4-conflict.}
\end{subfigure}
\begin{subfigure}[t]{.45\textwidth}
\centering
\begin{tikzpicture}[scale=.5]
	\node (1) at (0,4) [vert] {};
\node (2) at (2.5,6) [vert] {};
\node (3) at (5,4) [vert] {};
\node (4) at (4,1) [vert] {};
\node (5) at (1,1) [vert] {};
\draw [red, ultra thick] (1) -- (2);
\draw [red, ultra thick] (1) -- (5);
\draw [black, ultra thick] (2) -- (5);
\draw [thick,green] (5) -- (4);
\draw [thick,green] (1) -- (3);
\draw [thick,blue] (1) -- (4);
\draw [thick,blue] (2) -- (3);
\end{tikzpicture}
\subcaption{A 5-conflict.}
\end{subfigure}
\begin{subfigure}[t]{.45\textwidth}
\centering
\begin{tikzpicture}[scale=.5]
\node (1) at (0,4) [vert] {};
\node (2) at (2.5,6) [vert] {};
\node (3) at (5,4) [vert] {};
\node (4) at (4,1) [vert] {};
\node (5) at (1,1) [vert] {};
\draw [thick,red] (1) -- (2);
\draw [thick,red] (3) -- (4);
\draw [thick,blue] (2) -- (3);
\draw [thick,blue] (4) -- (5);
\draw [thick,green] (1) -- (3);
\draw [thick,green] (2) -- (5);
\end{tikzpicture}
\subcaption{A 6-conflict.}
\end{subfigure}
\caption{Representative conflicts.  Bold edges come from the triangle of a gadget. Black is used for edges in triangles whose color does not repeat. Other than bold triangles, colored edges come from different gadgets.}
\label{fig:conflict}
\end{figure}

\begin{itemize}
\item {\bf Type 1 conflicts}  (see Figure~\ref{fig:conflict}(1)) correspond to collections of 4 gadgets, each having an edge in some set $S$ of 4 vertices, making an alternating 4-cycle in $S$. 

\item {\bf Type 2 conflicts} (see Figure~\ref{fig:conflict}(2)) correspond to collections of 4 gadgets, say $\Gamma_1, \ldots, \Gamma_4$ making a $(5, 8)$-violation $S$ in the following way. $\G_1$ has a triangle in $S$ with two $c$-colored edges (red in the figure). $\G_2$ also has a $c$-colored edge in $S$, disjoint from the other $c$-colored edges. $\G_3$ and $\G_4$ each have a $c'$-colored edge (green) in $S$ and these two edges are disjoint. Furthermore there is no alternating 4-cycle (in particular, no red-green cycle).

\item {\bf Type 3 conflicts}  (see Figure~\ref{fig:conflict}(3)) correspond to collections of 4 gadgets, say $\Gamma_1, \ldots, \Gamma_4$ making a $(5, 8)$-violation $S$ in the following way. $\Gamma_1$ has a triangle in $S$ with two $c$-colored edges (red) and one $c'$-colored edge (blue). $\Gamma_2$ has one edge in $S$ colored $c'$. $\Gamma_3, \Gamma_4$ each have one $c''$-colored edge (green) in $S$ and these edges are disjoint.  There is no alternating 4-cycle (in particular, no green-blue cycle).

\item {\bf Type 4 conflicts} (see Figure~\ref{fig:conflict}(4)) correspond to a collections of 4 gadgets $\Gamma_1, \ldots, \Gamma_4$ making a $(5, 8)$-violation $S$ in the following way. $\Gamma_1$ and $\Gamma_2$ each have a triangle in $S$. These two triangles share exactly one vertex and can be ``oriented'' in any way (the figure shows the orientation where the shared vertex has red degree 1 and blue degree 2). Then $\Gamma_3$ and $\Gamma_4$ each contribute one $c$-colored edge (green), where these two edges are disjoint. 

\item {\bf Type 5 conflicts} (see Figure~\ref{fig:conflict}(5)) correspond to collections of 5 gadgets $\Gamma_1, \ldots, \Gamma_5$ making a $(5, 8)$-violation $S$ in the following way. $\Gamma_1$ has a triangle  in $S$. $\G_2$ and $\G_3$ each have $c$-colored edge (blue) in $S$, and $\G_4, \G_5$ each have a $c'$-colored edge (green) in $S$. The $c$-colored edges are disjoint, as are the $c'$-colored edges, and there is no alternating 4-cycle in $S$. In particular the $c$- and $c'$-colored edges together make an alternately colored path of length 4 $(s_1, s_2, \ldots, s_5)$ where $s_1, s_3, s_5$ are all on the triangle from $\G_1$. This path could be oriented in a few ways with respect to the orientation of the triangle from $\G_1$, and the figure shows one possible orientation.

\item {\bf Type 6 conflicts} (see Figure~\ref{fig:conflict}(6)) correspond to collections of 6 gadgets making a $(5, 8)$-violation $S$ in the following way. Each gadget has an edge in $S$, and these 6 edges are colored with 3 colors, with 2 edges in each of the 3 colors $c, c', c''$. Edges sharing a color are disjoint, and there is no alternating 4-cycle. In particular the union of any two colors from $c, c', c''$ is an alternating 5-path. There are a few ways these edges can be arranged with respect to each other, and the figure shows one way. 

\end{itemize}

\begin{prop}\label{clm:validcoloring}
   A $\mc{C}_A$-free matching $\mc{M}$ in $\mc{H}_A$ corresponds to a partial $(5, 8)$-coloring of $K_n$. 
\end{prop}
\begin{proof}
Suppose to the contrary that $S$ is a $(5, 8)$-violation in a partial coloring corresponding to a $\mc{C}_A$-free matching $\mc{M}$ in $\mc{H}_A$. So $|S|=5$ and $S$ has 3 color repetitions. Note that by the construction of $\mc{H}_A$, each monochromatic component in our coloring is contained in a gadget. This is due to the fact that if a gadget (edge of $\mc{H}_A$) corresponds to a set of colored edges where some vertex $v$ is touching a color $c$, then formally this gadget includes the vertex $(v, c) \in V(\mc{H}_A)$ and in a matching no other gadget can contain $(v, c)$ (i.e.\ no other gadget can contribute another $c$-colored edge touching $v$).  Next, we make the following observation.
\begin{observation}\label{obs:1}
    No gadget can have 4 vertices in our $(5, 8)$-violation $S$. 
\end{observation}
\noindent Indeed,
suppose $\Gamma$ has $4$ vertices $p, q, r, s$ in $S$, contributing at most two repeats (see Figure \ref{fig:4vtxs}). No other gadget in $\mc{M}$ can assign a color to any edge in $\{p, q, r, s\}$ since these are all either sharable edges or already colored by $\Gamma$. Letting $t$ be the fifth vertex in $S$, note that there cannot be an edge $tx$ for $x\in\{p, q, r, s\}$ of a color $c$ that appears in $\Gamma$. Indeed, either $c$ is non-touching for $x$ or is already touching $x$ via $\Gamma$. There also cannot be a gadget with a triangle in $S$ containing $t$ since the other two vertices of this triangle would be in $\{p, q, r, s\}$. Thus there is no way to get a third repeat to make $S$ a $(5, 8)$-violation. 
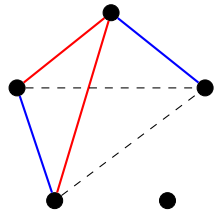
\begin{figure}[ht]
\centering
\begin{tikzpicture}[scale=.5]
\node   [label=$q$](1) at (0,4) [vert] {};
\node  [label=$r$] (2) at (2.5,6) [vert] {};
\node [label=$s$] (3) at (5,4) [vert] {};
\node  [label=left:$t$](4) at (4,1) [vert] {};
\node   [label=left:$p$](5) at (1,1) [vert] {};
\draw [red] (1) -- (2);
\draw [red] (2) -- (5);
\draw [dashed] (5) -- (3);
\draw [dashed] (1) -- (3);
\draw [blue] (1) -- (5);
\draw [blue] (2) -- (3);
\end{tikzpicture}
\caption{A gadget with 4 vertices in $S$.}
\label{fig:4vtxs}
\end{figure}

We make another observation:

\begin{observation}\label{obs:2}
    Suppose a gadget $\Gamma$ has 3 vertices in $S$ spanning one edge of color $c$, one edge of color $c'$ and one sharable edge. Then at most one of $c$ and $c'$ can be repeated in $S$. 
\end{observation}
\noindent Indeed, by the previous observation $\Gamma$ cannot have a fourth vertex in $S$. For the 3 vertices $\Gamma$ has in $S$, both $c, c'$ are either non-touching for these vertices (as is the case for red with $s$ in Figure~\ref{fig:4vtxs}) or else they are already touching due to $\Gamma$ (as is the case for red with $p, r, q$ and for blue with $p,q,r,s$ in Figure~\ref{fig:4vtxs}). Thus $S$ only has one edge which could be colored either $c$ or $c'$. 

 Together, Observations \ref{obs:1} and \ref{obs:2} imply the following:
 \begin{observation}\label{obs:3}
     The only way for a gadget $\Gamma$ to contribute to color repetitions in $S$ is for $S$ to either contain exactly 3 vertices of $\Gamma$ forming a triangle in $\Gamma$, or else for $\Gamma$ to have exactly one edge of a repeated color in $S$.
 \end{observation}

We consider the following cases for how to get 3 color repetitions. 

{\bf Case 1:} $S$ contains 4 edges of the same color. This is not possible since all monochromatic components in our coloring have at most 2 edges. 

{\bf Case 2:} $S$ contains 3 $c_1$-colored edges and 2 $c_2$-colored edges. The $c_1$-colored components induced in $S$ must consist of a 2-path $pqr$ and an isolated edge $st$ (see Figure~\ref{fig:3reps2cols} (1)). Let $\Gamma_1$ be the gadget with the $c_1$-colored 2-path. 
\begin{figure}[t]
\begin{subfigure}{.33\textwidth}
\centering
\begin{tikzpicture}[scale=.5]
\node[label=$q$] (1) at (0,4) [vert] {};
\node[label=above:$r$] (2) at (2.5,6) [vert] {};
\node[label=$s$] (3) at (5,4) [vert] {};
\node[label=left:$t$] (4) at (4,1) [vert] {};
\node[label=left:$p$] (5) at (1,1) [vert] {};
\draw [red] (1) -- (2);
\draw [red] (1) -- (5);
\draw [blue] (2) -- (5);
\draw [red] (3) -- (4);
\end{tikzpicture}
\caption{Two repetitions in the color $c_2$.}
\end{subfigure}
\begin{subfigure}{.66\textwidth}
\centering
\begin{tikzpicture}[scale=.5]
\node (1) at (0,4) [vert] {};
\node (2) at (2.5,6) [vert] {};
\node (3) at (5,4) [vert] {};
\node (4) at (4,1) [vert] {};
\node (5) at (1,1) [vert] {};
\draw [red] (1) -- (2);
\draw [red] (1) -- (5);
\draw [blue] (2) -- (5);
\draw [red] (3) -- (4);
\draw [green] (4) -- (5);
\draw [green] (3) -- (2);
\end{tikzpicture}
\begin{tikzpicture}[scale=.5]
\node (1) at (0,4) [vert] {};
\node (2) at (2.5,6) [vert] {};
\node (3) at (5,4) [vert] {};
\node (4) at (4,1) [vert] {};
\node (5) at (1,1) [vert] {};
\draw [red] (1) -- (2);
\draw [red] (1) -- (5);
\draw [blue] (2) -- (5);
\draw [red] (3) -- (4);
\draw [green] (4) -- (1);
\draw [green] (3) -- (2);
\end{tikzpicture}
\begin{tikzpicture}[scale=.5]
\node (1) at (0,4) [vert] {};
\node (2) at (2.5,6) [vert] {};
\node (3) at (5,4) [vert] {};
\node (4) at (4,1) [vert] {};
\node (5) at (1,1) [vert] {};
\draw [red] (1) -- (2);
\draw [red] (1) -- (5);
\draw [blue] (2) -- (5);
\draw [red] (3) -- (4);
\draw [green] (3) -- (1);
\draw [green] (4) -- (5);
\end{tikzpicture}
\caption{Three repetitions in the colors $c_2,c_3$.}
\end{subfigure}
\caption{Configurations involving three edges of the same color.}
\label{fig:3reps2cols}
\end{figure}
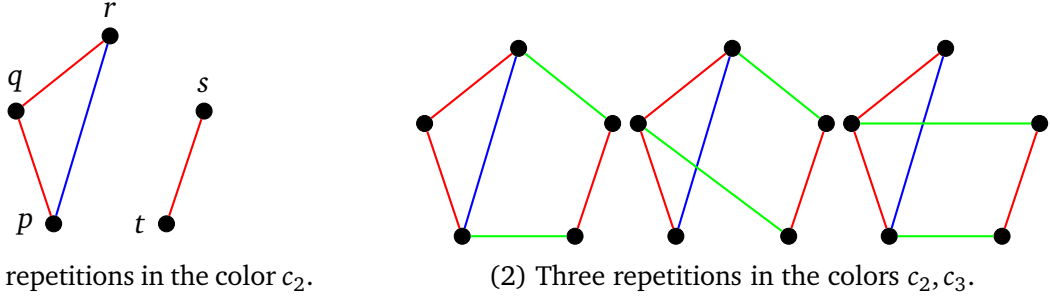
We claim that $pr$ cannot be colored $c_2$ (i.e.\ it cannot be that $c_2$ is blue and $c_1$ is red in  Figure~\ref{fig:3reps2cols} (1)). Indeed, in that case there must be another $c_2$-colored edge $e$ in $S$. It is not possible that $e$ is part of $\Gamma_1$ by Observation \ref{obs:3}. $e$ could also not be part of a different gadget since it would have to have one end at $p, q$ or $r$ which are all already touching $c_2$. Thus, $c_2$ cannot be the color blue in  Figure~\ref{fig:3reps2cols} (1). So there must be two other edges in $S$ colored $c_2$, in one of the configurations in Figure~\ref{fig:3reps2cols} (2). Note that these two $c_2$-colored edges cannot come from the same gadget by Observation \ref{obs:3}. No matter how we arrange the edges, we either end up with a Type 1 or 2 conflict (see Figure~\ref{fig:conflict}(1) and (2)). Thus, this case is impossible. 

{\bf Case 3:} $S$ has 2 edges in each of the colors $c_1, c_2, c_3$. By Observation \ref{obs:3}, there are two possibilities for each color $c_i$: either there is a gadget contributing a $c_i$-colored $2$-path to $S$, or two different gadgets each contributing a $c_i$-colored edge to $S$ and these two edges are nonadjacent. Note that we cannot fit 3 edge-disjoint triangles in $S$, and so it cannot be the case that we have a 2-path in each color $c_1, c_2$ and $c_3$. If we have 2-paths in 2 of our colors, say $c_1$ and $c_2$ then we have a Type 4 conflict (see Figure~\ref{fig:conflict}(4)). If we have a 2-path in only one color we have a Type 1, 3, or 5 conflict (see Figure~\ref{fig:conflict}(1),(3),(5)), and if we have no 2-paths we have a Type 1 or 6 conflict (see Figure~\ref{fig:conflict}(1),(6)). These are all contradictions since $\mc{M}$ is $\mc{C}_A$-free. Thus we have a partial $(5, 8)$-coloring. 
\end{proof}

In light of Proposition \ref{clm:validcoloring} and the fact that $C_A$ has $\frac 67 n+ O(n^{1-\d})$ colors, our proof of Theorem~\ref{thm:phaseA} will be done if we show there is a large enough $\mc{C}_A$-free matching in $\mc{H}_A$. To do that, we start checking the conditions of Theorem \ref{thm:A}. 

\subsection{Checking conditions of Theorem \ref{thm:A}}
Recall that we will use $k_A=19$. $\ell_A, \eps_A$ will be determined later. Set
\[
    d_A:= \frac{27}{196}n^7p^{14} + n^7p^{14.5}.
\]
$|V(\mc{H})|$ is polynomial in $n$ so it will be less than $\exp(d_A^{\eps_A^3})$. 
\begin{claim}
     $\Delta_2(\mc{H}_A) =O(n^6)$. Therefore \ref{cond:h1a} holds. 
\end{claim}

\begin{proof}
     We fix two vertices in $\mc{H}_A$ and bound the number of edges of $\mc{H}_A$ (i.e.\ colored gadgets $\Gamma$) containing them. Recall that $V(\mc{H}_A)=E' \cup V'$ (see \eqref{eqn:VHA}) and so we consider the following cases for our two fixed vertices of $\mc{H}_A$.
     
     If we fix $e, e' \in E'$ then since $e, e'$ must be in $\Gamma$ we choose at most $3$ more vertices from $K_n$ and 3 colors so there are $O(n^6)$ edges of $\mc{H}_A$ containing $e, e'$. 

     If we fix $e \in E'$, $(v, c) \in V'$ then we have fixed at least 2 vertices of $\Gamma$ and one color. Thus we choose at most 4 vertices and 2 colors and so there are $O(n^6)$ ways to complete an edge of $\mc{H}_A$. 

     If we fix distinct $(v, c), (v', c') \in V'$ then we have either fixed two vertices and at least one color, or two colors and at least one vertex. Either way there are $O(n^6)$ ways to complete an edge of $\mc{H}_A$. Thus the claim follows assuming that $14\d + 7\eps_A <1$ which follows from \eqref{eqn:ed}.
\end{proof}

\begin{claim}\label{claim:H2A}
    $(1-d_A^{-\eps_A})d_A\leq \delta(\mc{H}_A)\leq\Delta(\mc{H}_A)\leq d_A$. Therefore \ref{cond:h2a} holds. 
\end{claim}

\begin{proof}
We now find the expected degree of a vertex in $\mc{H}_A$. Recall that a vertex in $\mc{H}_A$ is either an edge $e \in E' \subseteq E(K_n)$ or a vertex-color pair $(v, c) \in V'$, where $c\in C_{A1}$ or $c\in C_{A2}$. First consider $e \in E'$. There are three distinct ways that $e$ could play in $\Gamma$. The first case is when $e=\gamma_i\gamma_j$ where $ij = 12, 56$ (see Figure~\ref{fig:G}). The expected number of edges of $\mc{H}_A$ using $e$ this way is 
\begin{align}
 & (n-2)(n-3) \binom{n-4}{2}|C_{A1}| |C_{A2}|^2 p^{\binom{6}{2}-7}p^{6} (1-p)^6 (1-p)^{6} \nn\\
& =  \frac 12 n^4 |C_{A1}| |C_{A2}|^2p^{14} \pm O(n^7p^{15}).\label{eqn:deg1}
\end{align}
On the other hand,  $e$ could also play the role of $\gamma_i\gamma_j$ in a $\Gamma$, where $ij = 13, 23, 45, 46$. The expected number of edges of $\mc{H}_A$ using $e$ this way is  
\begin{equation}\label{eqn:deg2}
  n^4 |C_{A1}| |C_{A2}|^2p^{14}\pm O(n^7p^{15}).
\end{equation}
The final way that $e$ could be a part of $\Gamma$ is for $ij=34$. The expected number of edges of $\mc{H}_A$ using $e$ this way is
\begin{equation}\label{eqn:deg3}
 \frac 14 n^4 |C_{A1}| |C_{A2}|^2 p^{14} \pm O(n^7p^{15}).
\end{equation}
Therefore, summing \eqref{eqn:deg1}, \eqref{eqn:deg2} and \eqref{eqn:deg3} we have
\begin{align*}
\mathbb{E} [\deg_{\mc{H}_A}(e) \mid e \in E'] &= \left(\frac12+1+\frac 14\right)n^4 |C_{A1}||C_{A2}|^2 p^{14}\pm O(n^7p^{15})\nn\\
&= \frac{7}{4}\left(\frac{3}{7}\right)^3 n^7 p^{14} \pm O(n^7p^{15})\nn\\
&= \frac{27}{196} n^7 p^{14}\pm O(n^7p^{15}).\label{eqn:d1}
\end{align*}

Now consider a vertex of $\mc{H}$ of the form $(v, c)$ where $c\in C_{A1}$. First we find the expected number of edges of $\mc{H}$ where $v$ plays the role of $\gamma_i$ for $i=1,2,5,6$:
\begin{equation}\label{eqn:deg4}
 \frac12 n^5  |C_{A2}|^2 p^{14}  \pm O(n^7p^{15}).
\end{equation}
For $c \in C_{A1}$ and $i=3,4$ the expected number of edges of $\mc{H}$ is
\begin{equation}\label{eqn:deg5}
  \frac14 n^5  |C_{A2}|^2p^{14}\pm O(n^7p^{15}) .
\end{equation}

So, summing \eqref{eqn:deg4} and \eqref{eqn:deg5}, we have for $c \in C_{A1}$ that
\begin{align}
\mathbb{E} [\deg_{\mc{H}_A}(v,c) \mid (v,c) \in V'] &= \left(\frac 12 + \frac 14\right)\left(\frac{3}{7}\right)^2n^7p^{14}  \pm O(n^7p^{15}) \nn\\
&= \frac{27}{196}n^7p^{14}  \pm O(n^7p^{15}).\label{eqn:d2}
\end{align}
Finally, for $c\in C_{A2}$,
\begin{align*}
\mathbb{E} [\deg_{\mc{H}_A}(v,c) \mid (v,c) \in V'] &=  n^3 \binom{n}{2}|C_{A1}||C_{A2}| p^{14}(1-p)^{12}\\
&\qquad\qquad + n\binom{n}{2}^2|C_{A1}||C_{A2}|p^{14} (1-p)^{12} \pm O(n^6)\nn\\
&=\frac{27}{196}n^7p^{14} \pm O(n^7p^{15}).\label{eqn:d3}
\end{align*}

We use McDiarmid's inequality (Theorem \ref{thm:mcd}) to show concentration of $\deg_{\mc{H}_A}(e)$ around its expected value. Define $X_{f}$ to be the indicator function for $f$ in $E'$ and $Y_{(v, c)}$ to be the indicator function for $(v, c)$ in $V'$. Then for $f$ in $E$, define $b_f$ to be $n^{3+3}$ when $f$ is incident to $e$ and $n^{2+3}$ otherwise. For $(v, c)$ in $V$, let $b_{(v, c)}$ be $n^{4+2}$ when $v$ is in $e$ and $n^{2+3}$ otherwise. Observe that in any of these cases $\deg_{\mc{H}_A}(e)$ changes by at most $b_i$, where $i=f$ or $(v, c)$. Then notice that $\sum_i b_i^2 =  O\left(n \cdot (n^{3+3})^2\right ) + O\left(n^2 \cdot (n^{2+3})^2\right ) + O\left(n \cdot (n^{4+2})^2\right ) + O\left(n^2 \cdot (n^{3+2})^2\right )=O(n^{13})$. Thus, McDiarmid's inequality with $t=\frac12 n^7p^{14.5}$ implies that w.h.p. $$\frac{27}{196}n^7p^{14}  -n^7p^{14.5} \le \deg_{\mc{H}_A}(e) \le \frac{27}{196}n^7p^{14}  + n^7p^{14.5}$$ for all $e$ (the failure probability in McDiarmid's inequality is small enough for the union bound over $e$).

We follow a similar argument for $\deg_{\mc{H}_A}((v, c))$. The calculations are equal since we are given one less vertex but one more color.  Therefore, w.h.p. $\mc{H}_A$ satisfies 
$$\frac{27}{196}n^7p^{14} -n^7p^{14.5} \le \delta(\mc{H}_A) \le \Delta(\mc{H}_A) \le \frac{27}{196}n^7p^{14} + n^7p^{14.5}.$$
The above upper bound is exactly $d_A$, and assuming that $\eps_A < \d / (14-28\d)$ which follows from \eqref{eqn:ed}, the lower bound above is at least $(1-d_A^{-\eps_A})d_A$.
\end{proof}

\begin{figure}[ht]
\begin{center}
 \begin{subfigure}{.48\textwidth}
\centering
\begin{tikzpicture}[scale=1]
\begin{scope}[rotate=0, shift={(0,0)}]
	\DrawGamma{blue}{black}{green}{$\Gamma_4$}
\end{scope}
\draw[blue, line width=2pt] (g1) -- (g2);
\begin{scope}[shift={(-1.87,1.87)}, rotate=90]
	\DrawGamma{red}{orange}{gray}{$\Gamma_3$}
\end{scope}
\draw[red, line width=2pt] (g1) -- (g2);
\begin{scope}[shift={(-3.74,0)}, rotate=0]
	\DrawGamma{blue}{teal}{violet}{$\Gamma_2$}
\end{scope}
\draw[blue, line width=2pt] (g5) -- (g6);
\begin{scope}[shift={(-1.87,-1.87)}, rotate=270]
	\DrawGamma{red}{yellow}{cyan}{$\Gamma_1$}
\end{scope}
\draw[red, line width=2pt] (g1) -- (g2);
\end{tikzpicture}
\caption{}
\end{subfigure}
\hfill
\begin{subfigure}{.48\textwidth}
\centering
\begin{tikzpicture}[scale=1]
\begin{scope}[rotate=-90, shift={(0,0)}]
	\DrawGamma{blue}{black}{black}{$\Gamma_0$}
\end{scope}
\draw[blue, line width=2pt] (g1) -- (g2);
\begin{scope}[shift={(-1.955,1.425)}, rotate=-90-72]
	\DrawGamma{green}{black}{black}{$\Gamma_1$}
\end{scope}
\draw[green, line width=2pt] (g1) -- (g2);
\begin{scope}[shift={(-1.21,3.73)}, rotate=-90-72-72]
	\DrawGamma{red}{black}{black}{$\Gamma_2$}
\end{scope}
\draw[red, line width=2pt] (g1) -- (g2);
\begin{scope}[shift={(1.21,3.73)}, rotate=-90-72-72-72]
	\DrawGamma{blue}{black}{black}{$\Gamma_3$}
\end{scope}
\draw[blue, line width=2pt] (g1) -- (g2);
\begin{scope}[shift={(1.955,1.425)}, rotate=-90-72-72-72-72]
	\DrawGamma{red}{black}{black}{$\Gamma_4$}
\end{scope}
\draw[red, line width=2pt] (g1) -- (g2);
\begin{scope}[shift={(2.36,2.83)}, rotate=-90-72-180, scale=1.618]
	\DrawGamma{green}{black}{black}{$\Gamma_5$}
\end{scope}
\draw[green, line width=2pt] (g1) -- (g2);
\end{tikzpicture}
\caption{}
\end{subfigure}
\caption{A 4-conflict (left) and a 6-conflict (right). The black edges are not given colors to aid readability.}
\label{fig:C1-C3}
\end{center}
\end{figure}
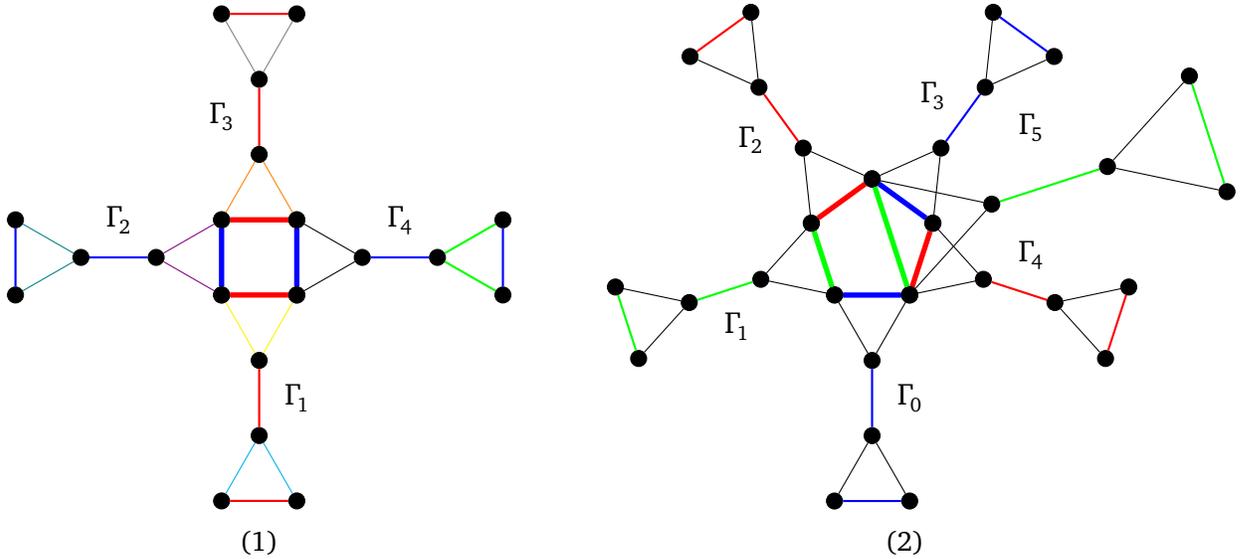

We move on to checking the conditions in Theorem \ref{thm:A} relevant to the conflict system. Recall that the conflicts in $\mc{C}_A$ have size at most 6, so $\mc{C}_A$ is $\ell_A$-bounded as long as $\ell_A \ge 6$. 

\begin{claim}
    \ref{cond:c1a} holds for some constant $\ell_A$.
\end{claim}
\begin{proof}
We start verifying the claim with $j=4$. We will check one case in more detail and then provide a general argument for the rest. We will check Type 1 conflicts in detail, i.e.\ we will check that each gadget is contained in $O(d^3)$ conflicts of Type 1. Fix a gadget $\G_1$. A Type 1 conflict in $\mc{C}_A^{(4)}$ containing $\G_1$ contains 3 other gadgets $\Gamma_2, \Gamma_3, \Gamma_4$ (see Figure~\ref{fig:C1-C3}(1)). Fixing $\Gamma_1$ and assuming we choose all the information for $\G_2, \G_3, \G_4$ in that order, the choices we must make are summarized in Table~\ref{tab:C1A}. Note that this involves choosing several vertices and colors, but certain pairs of the vertices we choose must not induce edges in $E'$ and some pairs of vertices and colors must not be in $V'$. The table also summarizes these requirements.
\begin{table}[h!]
\begin{center}
\begin{tabular}{c|ccc|c}
& $\Gamma_2$ & $\Gamma_3$ & $\Gamma_4$ & total\\
\hline
vertices & 5 & 5 & 4 & 14\\
colors & 3 & 2 & 2 & 7\\
$e \not\in E'$ & 8 & 8 & 8 & 24\\
$(v, c) \not\in V'$ & 6 & 6 & 6 & 18\\
\end{tabular}
\end{center}
\caption{Counting vertices, colors, sharable edges and non-touching colors.}
\label{tab:C1A}
\end{table}
Each vertex and each color has $O(n)$ choices. The event that any particular pair of vertices induces an edge that is not in $E'$ is $p$, which is also the probability of any particular pair $(v, c)$ not being in $V'$. Thus, the expected number of ways to complete our Type 1 conflict is $O(n^{14+7}p^{24+18})=O(d^3)$. In a similar approach to that outlined in the proof of Claim~\ref{claim:H2A}, McDiarmid's inequality shows concentration and so w.h.p.~each gadget $\G_1$ is in $O(d^3)$ conflicts of Type 1. Notice that we needed to include factors of $p$ in our estimate. Oftentimes, such as in Claim~\ref{claim:C2A}, we do not need to be as precise in order to verify the required condition. 

Now we claim that each gadget is in $O(d^3)$ conflicts of Types 2, 3, and 4 (i.e.\ the rest of the conflicts in $\mc{C}_A^{(4)}$). These conflicts all involve 4 gadgets such that the sum of the number of vertex choices and color choices is 30. Indeed, for Type 2 and 3 conflicts there are 20 vertices spanned and 10 colors used, while for Type 4 conflicts there are 19 vertices spanned and 11 colors used. Any of these conflicts involves having 32 pairs of vertices not in $E'$ and 24 vertex-color pairs not in $V'$. Fixing one of the gadgets gives us 6 vertices, 3 colors, 8 pairs not in $E'$ and 6 pairs not in $V'$. Thus the expected number of conflicts of Type 2, 3, or 4 containing a fixed gadget is $O(n^{30-6-3}p^{32+24 - 8 - 6}) = O(d^3)$. As before, McDiarmid's inequality shows concentration.

We move on to $\mc{C}_A^{(5)}$, meaning Type 5 conflicts. We claim that each gadget is in $O(d^4)$ conflicts of Type 5. Indeed, such a conflict involves 5 gadgets spanning a total of 24 vertices, using a total of 13 colors, having 40 pairs of vertices not in $E'$ and 30 vertex-color pairs not in $V'$. Thus the expected number of conflicts of Type 5 containing a fixed gadget is $O(n^{24+13-6-3}p^{40+30 - 8 - 6}) = O(d^4)$.

Finally we consider $\mc{C}_A^{(6)}$, meaning Type 6 conflicts. These conflicts involve 6 gadgets spanning 29 vertices, using 15 colors, having 48 pairs not in $E'$ and 36 pairs not in $V'$.  Thus the expected number of conflicts of Type 5 containing a fixed gadget is $O(n^{29+15-6-3}p^{48+36 - 8 - 6}) = O(d^5)$.

For each $j=4, 5, 6$ we have shown that $\Delta(\mc{C}_A^{(j)})=O(d^{j-1})$. Thus, there exists an $\ell'$ such that $\Delta(\mc{C}_A^{(j)}) \le \ell' d^{j-1}$ for each $j$. 
\[
    \ell_A:=\max(\ell', 6),
\]
then \ref{cond:c1a} is verified. 
\end{proof}

\begin{claim}\label{claim:C2A}
    \ref{cond:c2a} holds. 
\end{claim}
\begin{proof}
As with the previous claim, we will check Type 1 conflicts in more detail and provide a general argument for the rest of the types. For Type 1 conflicts we have $j=4$ so $j'=2, 3$. For $j'=2$ so we fix some $\G_i, \G_j$. There are two cases: $\Gamma_i$ and $\Gamma_j$ either share a vertex or a color. If they share a vertex, then we need to choose 9 more vertices and 4 more colors. If they share a color, then we need 8 more vertices and 5 more colors. Thus, the number of Type 1 conflicts containing $\G_i, \G_j$ is $O(n^{13})$ which is smaller than $d^{4-2-\eps_A}$ by \eqref{eqn:ed} (the important part here is that $d$ has a factor $n^{7}$).
 Now for $j'=3$, there is only one unfixed $\Gamma_i$ for which we need to choose 4 vertices and 2 colors. So the number of Type 1 conflicts containing 3 fixed gadgets is $O(n^{6}) < d^{4-3-\eps_A}$. This completes our detailed discussion of Type 1 conflicts.

In general, consider a conflict of any of the types in $\mc{C}_A^{(j)}$ for $j=4, 5, 6$. As we discussed in the proof of the previous claim, each such conflict involves a total of $7j+2$ vertices and colors. Now fix $j'$ gadgets in the conflict, for $2 \le j' < j$. The crucial observation (which the reader can check by looking at Figure \ref{fig:conflict}) is that for all of our conflict types, fixing any set of $j'$ gadgets fixes at least $7j'+3$ vertices and colors. This leaves $7(j-j')-1$ vertices and colors to choose, so our number of choices is $O(n^{7(j-j')-1}) < d^{j-j'-\eps_A}$ which follows from \eqref{eqn:ed} and the fact that $d$ has the factor $n^7$. So \ref{cond:c2a} is verified.
\end{proof}

Thus, all conditions of Theorem \ref{thm:A} are met, and so $\mc{H}_A$ has a $\mc{C}_A$-free matching of size asymptotically (recalling \eqref{eqn:nadef} and $k_A=19$) $n_A/k_A \sim n^2/14$. Since every edge of our matching colors 7 edges in our partial $(5, 8)$-coloring of $K_n$, the number of colored edges is asymptotically $n^2/2$. This completes the proof of  Theorem \ref{thm:phaseA}.
\end{proof}

As we discussed in Section \ref{sec:overview}, we have some leftover uncolored edges after Phase A (this includes both the sharable edges and some more edges). We plan to color them in Phase B by applying Theorem \ref{thm:B} to a new hypergraph $\mc{H}_B$ and conflict system $\mc{C}_B$ which together will encode the coloring problem we have for the remaining uncolored edges. However, for the purposes of checking that $\mc{H}_B, \mc{C}_B$ satisfy the conditions of Theorem \ref{thm:B} we must establish some quasirandom properties of the coloring produced in Phase A. The following lemma summarizes these properties. Recall that $E''$ is the set of sharable edges. Let $E'''$ be the set of non-sharable edges which do not get colored in Phase A. We say the edges in $E'' \cup E'''$ are the {\em uncolored edges}, and the {\em uncolored degree} of a vertex is the number of incident uncolored edges. 
\begin{definition}\label{def:Sab}
    For $e \in E'' \cup E'''$, let $\mc{S}_{a, b}(e)$ be the family of sets $S \subseteq V(K_n)$ with $|S|=a$, having $b$ color repetitions in the coloring from Phase A, and containing both $e$ and an uncolored edge $e'$ which is disjoint from $e$ and which belongs to the same set (either $E''$ or $E'''$) as $e$.
\end{definition} 
Roughly speaking, these sets $S$ represent potential danger for Phase B in the sense that if $e$ gets the same color as $e'$ in Phase B, then $S$ gains a repeat. We focus on the case where $e$ and $e'$ are disjoint since we will be able to do Phase B where every color class is a matching. Also, in Phase B we will use disjoint sets of colors for $E''$ and $E'''$ which is why we restrict our attention to the case when $e$ and $e'$ are in the same set. Note that $\mc{S}_{4, b}(e)=\emptyset$ for $b \ge 2$. Indeed, the only way for a set $S$ of 4 vertices to get 2 repeats in Phase A  is if some gadget has 4 vertices ( like say $\g_1, \g_2, \g_3, \g_4$ in Figure \ref{fig:G}) in $S$. But in that case, the only two uncolored edges in $S$ share an endpoint, so $S \notin \mc{S}_{4, 2}(e)$. Also, $\mc{S}_{5, b}(e)=\emptyset$ for $b \ge 3$ since Phase A  gives a partial $(5, 8)$-coloring.

\begin{lemma}\label{lem:quasi-random}
Our matching $\mc M_A$ and corresponding partial $(5,8)$-coloring from Phase A can be taken to satisfy the following:
\begin{enumerate}
\item  each vertex $v$ has uncolored degree $O(n^{1-\eps_A^3})$; \label{lem:quasi-random-deg}
\item for each shareable edge $uv \in E''$, the number of gadgets containing $uv$ as in Figure~\ref{fig:scaff} is $O(n^\delta)$ \label{lem:quasi-random-shareable}
\item for each uncolored edge $uv$ and every $(a, b) \in \{(4, 0), (4, 1), (5,0), (5, 1), (5, 2)\}$, we have $|\mc{S}_{a, b}(uv)|=O(n^{a-b-2})$ \label{lem:quasi-random-a-b}
\item  For each pair $e_1, e_2$ of uncolored edges, there are at most $O(n^{1-\delta/2})$ pairs of sets $S_1, S_2$ with the following properties. $S_j \in \mc{S}_{4,1}(e_j) \cup \mc{S}_{5,2}(e_j)$ for $j=1,2$. Furthermore there is a single edge $e' \subseteq S_1 \cap S_2$ which is playing the role of the second uncolored edge (besides $e_1$) in $S_1$ and (besides $e_2$) in $S_2$. \label{lem:quasi-random-x-lem}
\end{enumerate}
\end{lemma}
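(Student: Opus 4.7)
My plan is to combine two kinds of concentration: the quasi-randomness of the random subsets $E', V'$ (handled by Chernoff and McDiarmid inequalities), and the vertex- and witness-tracking extensions of Theorem~\ref{thm:A}, which guarantee that in the output matching $\mc{M}_A$ the number of representatives of any ``reasonable'' family of configurations in $\mc{H}_A$ is close to its expected value. For each of the four items I would first compute the heuristic expected value and verify that it matches the claimed bound; item~(2) uses only the random-selection step, while items~(1), (3), (4) also rely on the tracking extension.

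For item~(1), the uncolored degree at $v$ splits into the shareable contribution, which in expectation is $(n-1)p = O(n^{1-\d})$ and is concentrated by Chernoff, and the number of incident edges in $E'$ missed by $\mc{M}_A$. The vertexwise version of Theorem~\ref{thm:A} implies that each vertex of $\mc{H}_A$ is uncovered with probability at most $d_A^{-\eps_A^3}$, so the latter contribution is w.h.p.~at most $O(n \cdot d_A^{-\eps_A^3}) = O(n^{1-\eps_A^3})$; since $\eps_A \ll \d$ this dominates. For item~(2) I would compute directly the expected number of scaffold gadgets through a fixed shareable pair $uv$ from the random choices of $E'$ and $V'$ alone: each scaffold configuration has a fixed number of free vertex and color choices and a fixed number of required non-edges (each contributing a factor of $p$), and the scaffold structure is arranged so that the resulting expectation is $O(n^\d)$. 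McDiarmid's inequality, with Lipschitz bounds on the indicators $X_f, Y_{(v,c)}$ analogous to those in the proof of Claim~\ref{claim:H2A}, yields concentration, and a union bound over shareable pairs $uv$ closes the step.

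For item~(3), I would define an auxiliary witness family: tuples consisting of the fixed uncolored edge $uv$, a disjoint uncolored edge $e'$ of the same type, the extra $a-4$ vertices of $S$, and a specification of $b$ pairs of colored edges in $S$ that share a color. The number of free vertex choices gives $O(n^{a-2})$, and each stipulated color coincidence costs an additional $O(1/n)$ factor (since there are $\Theta(n)$ available colors), producing an expectation of $O(n^{a-b-2})$. The tracking extension of Theorem~\ref{thm:A} then bounds the actual count by a constant multiple of the expectation, uniformly in $uv$. The hardest subcase here is $(a,b)=(5,2)$, where one must enumerate the few ways in which the two repetitions can be realized (same gadget contributing a triangle with a repeated color, two disjoint gadgets sharing a color class, or a combination) and verify that each contributes the correct power of $n$.

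Item~(4) is where I expect the main obstacle to lie. The naive product of the item~(3) heuristics already gives a constant bound for each of the four combinations with $(a_j,b_j) \in \{(4,1), (5,2)\}$: choosing the common $e'$ costs $O(n^2)$, and each side contributes $O(n^{a_j - 4 - b_j}) = O(n^{-1})$. The looser target $O(n^{1-\d/2})$ reflects correlations between the two witnesses that share $e'$. I would enumerate the possible intersection patterns of $S_1$ and $S_2$ (which vertices are shared besides those of $e'$, which colored edges they have in common, and whether any gadget appears on both sides) and apply the extension of Theorem~\ref{thm:A} that jointly tracks pairs of witnesses. The slack $n^{-\d/2}$ absorbs the McDiarmid error thresholds that arise when the joint witness families are so small that a bare constant-factor concentration bound is insufficient, and it is here that the inequality $\eps_A \ll \d$ from \eqref{eqn:ed} is genuinely used.
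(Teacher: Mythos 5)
Your high-level plan (quasirandomness of $E',V'$ plus tracking extensions of Theorem~\ref{thm:A}) points in the right direction, and your item (1) essentially matches the paper, which applies the test-function extension (Theorem~\ref{thm:test}) to the $1$-uniform function counting colored edges at $v$ to get colored degree $(1\pm n^{-\eps_A^3})n$ for every $v$ (note that Theorem~\ref{thm:A} itself only controls the total number of uncovered vertices of $\mc{H}_A$, not their distribution over the vertices of $K_n$, so the tracking version is genuinely needed there). However, item (2) as you describe it is wrong. The expected number of gadgets of $\mc{H}_A$ containing a fixed shareable pair $uv$ as a non-edge, computed from the random choices of $E'$ and $V'$ alone, is $\Theta(n^7p^{13})=\Theta(n^{7-13\delta})$ --- enormously larger than $n^{\delta}$. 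The bound $O(n^{\delta})$ is a statement about the gadgets that survive into the matching $\mc{M}_A$, and the paper obtains it by applying Theorem~\ref{thm:test} to the indicator test function of these gadgets: $w(\mc{H}_A)=\Theta(d_A/p)$, hence $w(\mc{M}_A)=(1\pm d_A^{-\eps_A^3})d_A^{-1}w(\mc{H}_A)=O(1/p)=O(n^{\delta})$. McDiarmid over $E',V'$ cannot produce this; the crucial factor $1/d_A$ comes from the matching, not from the sparsification.

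A second gap concerns the concentration machinery for items (3) and (4). You propose to bound every witness family via the tracking extension, but Theorem~\ref{thm:test} requires \ref{item: trackable size}, namely $w(\mc{H})\ge d^{j+\eps}$, and most of the families in item (3) (e.g.\ $(a,b)=(4,1)$ with target $O(n)$) are far too sparse, once encoded as $j$-uniform test functions on gadget tuples, to satisfy this. The paper sidesteps the issue by proving almost all of item (3) deterministically from structural facts of the Phase-A coloring (each vertex has degree at most $2$ in every color; at most one monochromatic $2$-path between any two vertices; part (2) for the subcase in which $uv$ is reused as a shareable edge of a gadget), reserving Theorem~\ref{thm:test} for the single subcase whose $4$-uniform test function has $w(\mc{H}_A)=\Theta(d_A^4 n)\ge d_A^{4+\eps}$. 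For item (4) the correct tool is neither a joint-tracking statement nor a McDiarmid threshold but the spreadness lemma (Lemma~\ref{lem:X}), which upper-bounds the number of edges of a $(D,1/d_A)$-spread family contained in $\mc{M}_A$ by $Dd_A^{-j+\eps/12}$ even when that quantity is $O(1)$ times a tiny power of $d_A$; the slack $n^{1-\delta/2}$ absorbs exactly this $d_A^{\eps_A/12}$ loss together with the $O(n^{1-\delta})$ free choice of $e'$ in the one configuration where $e'$ has an endpoint not determined by the gadgets. So while your heuristic expectations are correct throughout, the concentration arguments you invoke would not close items (2)--(4) as written.
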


Much of the content of Lemma \ref{lem:quasi-random} is heuristically easy to see (and much of it is easy to prove). The proof of lemma \ref{lem:quasi-random} is in Appendix \ref{sec:quasirandom}, and here we will just explain why it is believable. We already know Phase A colors almost all edges so part (\ref{lem:quasi-random-deg}) makes sense. About $n^2/2$ edges are colored and about $n^{2-\d}/2$ are sharable and so part (\ref{lem:quasi-random-shareable}) makes sense. To heuristically see part (\ref{lem:quasi-random-a-b}), first note that to extend $uv$ to a set $S \in \mc{S}_{a, b}(uv)$ we need to choose $a-2$ vertices. Then recall that the Phase A coloring is the result of a random process in which colored gadgets are chosen one at a time, and heuristically the probability that one randomly chosen color matches another one (i.e.\ the probability of each color repeat) is on the order $1/n$.  Part (\ref{lem:quasi-random-x-lem}) is, as we will see in Appendix \ref{sec:quasirandom}, bounding a family of configurations of gadgets each of which is heuristically (and actually) unlikely to appear a lot.  

\begin{figure}[ht]
\centering
\begin{tikzpicture}[scale=1.5]
\begin{scope}[rotate=-18, shift={(0,0)}]
	\DrawGamma{blue}{red}{green}{}
\end{scope}
\node[label=below:$u$] (u) at (g2) {};
\node[label=below:$v$] (v) at (g4) {};
\draw[dashed] (u) -- (v);
\end{tikzpicture}
\caption{Structures bounded by Lemma~\ref{lem:quasi-random}(\ref{lem:quasi-random-shareable}).}
\label{fig:scaff}
\end{figure}
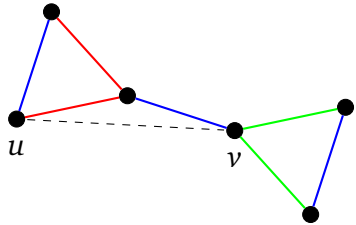
 
\section{Phase B}

We now describe $\mc{H}_B$ and $\mc{C}_B$ for our application of Theorem \ref{thm:B}. As we go we will verify some conditions of the theorem. $\mc{H}_B$ will be a bipartite graph (so $k_B=2$) with vertex set $X \cup Y$ where $X=E'' \cup E'''$ is the set of edges in $K_n$ which have not received a color by the end of Phase A. Set
\[
d_B:= n \log^{-1/10} n.
\]  
Let $C_B := C_{B1} \cup C_{B2}$  where $C_{B1}, C_{B2}$ are two disjoint sets of  $2 d_B$ new colors each. 
$Y$ will contain a vertex $e_c$ for every $(e, c) \in E'' \times C_{B1}$ and for every $(e, c) \in E''' \times C_{B2}$. Each $e \in X = E'' \cup E'''$ will be adjacent to every $e_c \in Y$. For clarity, $\mc{H}_B$ is just the disjoint union of stars, where each $e$ is the center of a star which has the $e_c$ as leaves. For $e \in X$ and $e_c \in Y$, we abuse notation by writing $e_c$ to also denote the edge $ee_c$ of $\mc{H}_B$. Condition \ref{cond:h1b} is satisfied since $\Delta_2(\mc{H}_B)=1$. Condition \ref{cond:h2b} is satisfied since every vertex in $X$ has degree $2d_B$ and every vertex in $Y$ has degree 1. 
An $X$-perfect matching in $\mc{H}_B$ corresponds to a coloring of the uncolored edges in the obvious way. Note that the sharable edges (in $E''$) only get colors from $C_{B1}$ and the nonsharable edges (in $E'''$) get colors from $C_{B2}$. 

Now we will define our conflict system $\mc{C}_B$ so that a $\mc{C}_B$-free matching in $\mc{H}_B$, together with our partial coloring from Phase A, gives us our final $(5, 8)$-coloring. Once we define $\mc{C}_B$, verifying Conditions \ref{cond:c1b}--\ref{cond:c4b} will take some more work.

To define $\mc{C}_B$ we will first define another conflict system $\mc{D}$ and then let $\mc{C}_B$ be the set of minimal (inclusion-wise) conflicts of $\mc{D}$. $\mc{D}$ will have the following conflicts, described in order of increasing size. 

{\bf 2-conflicts in $\mc{D}$:} $\{e_c, f_c\}$ will be a 2-conflict if any of the following holds:
\begin{itemize}
    \item $e$ and $f$ are adjacent in $K_n$,
    \item $f$ is in some $S \in \mc{S}_{4, 1}(e)$ (see Figure \ref{fig:Dconflicts}(1)), or 
    \item $f$ is in some $S \in \mc{S}_{5, 2}(e)$.
\end{itemize}
Note that when we write $e_c$ and $f_c$ this is implying that $e$ and $f$ belong to the same set $E''$ or $E'''$ (or else they could not get the same subscript). 

{\bf 4-conflicts in $\mc{D}$:} $\{e_c, f_c, e'_{c'}, f'_{c'} \}$ will be a 4-conflict if any of the following holds:
\begin{itemize}
    \item $f, e', f'$ are in some $S \in \mc{S}_{4, 0}(e)$ (see Figure \ref{fig:Dconflicts}(2)), or 
    \item $f, e', f'$ are in some $S \in \mc{S}_{5, 1}(e)$ (see Figure \ref{fig:Dconflicts}(3)).
\end{itemize}

{\bf 6-conflicts in $\mc{D}$:} $\{e_c, f_c, e'_{c'}, f'_{c'}, e''_{c''}, f''_{c''} \}$ will be a 6-conflict if $f, e', f', e'', f''$ are all in some $S \in \mc{S}_{5, 0}(e)$.

\begin{figure}[ht]
\begin{subfigure}{.24\textwidth}
\centering
\begin{tikzpicture}[scale=1]
\foreach \i in {0,1,2,3}
	\node[vert] (\i) at ({cos(360 * \i/4+45)}, {sin(360*\i/4+45}) {};
	\draw [dashed] (0) -- node[above] {$e'$} (1) ; 
    \draw [blue] (1) -- (2) ;
    \draw [dashed] (2) -- node[below] {$e$} (3) ; 
    \draw [blue] (3) -- (0) ;
\end{tikzpicture}
\caption{}
\end{subfigure}
\begin{subfigure}{.24\textwidth}
\centering
\begin{tikzpicture}[scale=1]
\foreach \i in {0,1,2,3}
	\node[vert] (\i) at ({cos(360 * \i/4+45)}, {sin(360*\i/4+45}) {};
	\draw [dashed] (0) -- node[above] {$e'$} (1) ; 
    \draw [dashed] (1) -- node[left]{$f$} (2) ;
    \draw [dashed] (2) -- node[below] {$e$} (3) ; 
    \draw [dashed] (3) -- node[right] {$f'$} (0) ;
\end{tikzpicture}
\caption{}
\end{subfigure}
\begin{subfigure}{.24\textwidth}
\centering
\begin{tikzpicture}[scale=1]
\foreach \i in {0,1,2,3,4}
	\node[vert] (\i) at ({cos(360 * \i/5+18)}, {sin(360*\i/5+18}) {};
	\draw [dashed] (3) -- node[below] {$e$} (4) ;
    \draw [dashed] (0) -- node[above] {$e'$} (1) ;
    \draw [dashed] (0) -- node[right] {$f$} (4) ;
    \draw [dashed] (1) -- node[above] {$f'$} (2) ;
    \draw [red] (3) -- node[right] {} (0) ;
    \draw [blue] (0) -- node[right]{}  (2) ;
    \draw [blue] (3) -- node[right]{}  (2) ;
\end{tikzpicture}
\caption{}
\end{subfigure}
\caption{Representations of a 2-conflict and two 4-conflicts in $\mc{D}$. All colored edges are from Phase A, and dashed edges will be colored in Phase B. }
\label{fig:Dconflicts}
\end{figure}

We let $\mc{D}$ be the collection of all the conflicts described above, and we let $\mc{C}_B$ be the collection of all minimal (inclusion-wise) conflicts in $\mc{D}$. Since Conditions \ref{cond:c1b}--\ref{cond:c4b} all require only upper bounds on the cardinalities of certain sets of edges, it suffices to check these conditions for $\mc{D}$ instead of $\mc{C}_B$ whenever convenient.
\begin{prop}
     A $\mc{C}_B$-free $X$-perfect matching in $\mc{H}_B$ corresponds to a $(5, 8)$-coloring of $K_n$. 
\end{prop}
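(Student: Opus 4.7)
The plan is to argue by contradiction. Suppose $\mc{M}_B \subseteq E(\mc{H}_B)$ is a $\mc{C}_B$-free $X$-perfect matching; then $\mc{M}_B$ prescribes a Phase B color for every uncolored edge and, together with the Phase A coloring obtained from $\mc{M}_A$ via Proposition~\ref{clm:validcoloring}, yields a full coloring of $K_n$. Assume this coloring has a $(5,8)$-violation $S$. The goal is to exhibit some $D \in \mc{D}$ with $D \subseteq \mc{M}_B$: since $\mc{C}_B$ consists of the minimal conflicts of $\mc{D}$, any such $D$ contains some $C \in \mc{C}_B$ with $C \subseteq D \subseteq \mc{M}_B$, contradicting $\mc{C}_B$-freeness.

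Since $C_A$ and $C_B$ are disjoint, every color repetition in $S$ is either entirely within Phase A or entirely within Phase B. Write $r_A, r_B$ for the two counts, so $r_A + r_B \geq 3$. By Proposition~\ref{clm:validcoloring}, the Phase A coloring is a partial $(5,8)$-coloring, whence $r_A \leq 2$ and therefore $r_B \geq 1$.

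Now I would make two preliminary reductions. First, if any Phase B color $c$ appears on two \emph{adjacent} edges $e,f \subseteq S$, then the first-bullet 2-conflict $\{e_c, f_c\}$ lies in $\mc{M}_B$ and we are done; so assume every same-Phase-B-color pair of edges inside $S$ is vertex-disjoint. Second, three pairwise disjoint edges span six vertices, so each Phase B color class contributes at most two edges, hence at most one repetition, to $S$. Note also that two Phase B edges sharing a color must both lie in $E''$ or both in $E'''$, because in $\mc{H}_B$ the pools $C_{B1},C_{B2}$ are assigned respectively to $E'',E'''$; this is what makes the ``same-set'' requirement in the definition of $\mc{S}_{a,b}(\cdot)$ automatic.

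The remaining case analysis is now short. If $r_A = 2$, pick any disjoint pair $\{e,f\} \subseteq S$ of the same Phase B color $c$; then $S \in \mc{S}_{5,2}(e)$ and the third-bullet 2-conflict $\{e_c, f_c\}$ sits in $\mc{M}_B$. If $r_A = 1$, then $r_B \geq 2$, so two distinct Phase B colors $c,c'$ each contribute a disjoint pair $\{e,f\},\{e',f'\} \subseteq S$, whence $S \in \mc{S}_{5,1}(e)$ and the second-bullet 4-conflict $\{e_c, f_c, e'_{c'}, f'_{c'}\}$ sits in $\mc{M}_B$. If $r_A = 0$, then $r_B \geq 3$, and since each class contributes at most one repetition we get three distinct Phase B colors giving three disjoint pairs inside $S$, placing $S \in \mc{S}_{5,0}(e)$ and producing a 6-conflict. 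In every branch a member of $\mc{D}$ is contained in $\mc{M}_B$, contradicting $\mc{C}_B$-freeness. The only delicate point is the reduction that each Phase B color contributes at most one repetition in $S$: the first-bullet 2-conflict is what guarantees that every monochromatic Phase B subgraph restricted to $S$ is a matching, which in turn is exactly what allows the $\mc{S}_{5,b}(e)$ clauses of $\mc{D}$ to apply cleanly.
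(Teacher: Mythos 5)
Your proposal is correct and follows essentially the same route as the paper: you case on the number of Phase~A repetitions $r_A\in\{0,1,2\}$ and invoke the $6$-, $4$-, and $2$-conflicts arising from $\mc{S}_{5,0}$, $\mc{S}_{5,1}$, $\mc{S}_{5,2}$ respectively, exactly as the paper cases on which family $\mc{S}_{5,b}(e)$ contains $S$. Your explicit preliminary reductions (adjacent same-colored Phase~B edges trigger a first-bullet $2$-conflict, hence each Phase~B color class restricted to $S$ is a matching contributing at most one repetition, and the disjoint color pools $C_{B1},C_{B2}$ make the same-set requirement automatic) are the same observations the paper makes parenthetically, just stated more carefully.
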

\begin{proof}
    Recall that Phase A produces a partial $(5, 8)$-coloring, so in particular the Phase A coloring has at most 2 color repetitions in any set of 5 vertices. Now suppose we have a $\mc{C}_B$-free $X$-perfect matching in $\mc{H}_B$. Since our matching is $\mc{C}_B$-free it is also $\mc{D}$-free. Since $X = E'' \cup E'''$, our matching colors every edge that did not get colored in Phase A. To show that the final coloring is a $(5, 8)$-coloring, let $S$ be a set of 5 vertices. From Phase A, $S$ has either 0, 1, or 2 repeats. If $S \notin \mc{S}_{5, 0}(e) \cup \mc{S}_{5, 1}(e) \cup \mc{S}_{5, 2}(e)$ for all $e$, this can only be because $S$ does not contain two disjoint edges that are both in $E''$ or both in $E'''$ (see Definition \ref{def:Sab}). In that case it is not possible for $S$ to gain any new color repetitions in Phase B. If $S \in \mc{S}_{5, 0}(e)$ for some $e$ then $S$ cannot get 3 new repetitions in Phase B since that situation would arise from a 6-conflict (note that in a $\mc{D}$-free matching we cannot have 3 edges of the same Phase B color in $S$ since our 2-conflicts enforce that adjacent edges have different colors). If $S \in \mc{S}_{5, 1}(e)$ for some $e$ then $S$ cannot get 2 new repetitions in Phase B since that would be a 4-conflict. Finally, if $S \in \mc{S}_{5, 2}(e)$ for some $e$ then $S$ cannot get any new repetitions in Phase $B$ since that would be a 2-conflict. 
\end{proof}

We check condition \ref{cond:c1b}, starting with $j=2$. Fix $e_c$ and we will bound the number of $f_c$ such that $\{e_c, f_c\}$ is a conflict. There are $O(n^{1-\eps_A^3})$ edges $f$ adjacent to $e$ by part (\ref{lem:quasi-random-deg}), and $O(n)$ edges $f$ in some $S \in \mc{S}_{4, 1}(e) \cup \mc{S}_{5, 2}(e)$ by part (\ref{lem:quasi-random-a-b}). Thus we have $\Delta(\mc{D}^{(2)}) = O(n) =o( d_B \log d_B)$. Now we check $j=4$. We have $\Delta(\mc{D}^{(4)}) = O(n^3)=o( d_B^3 \log d_B)$ since, fixing some $e_c$, we can determine the rest of a 4-conflict by first choosing some $S \in \mc{S}_{4, 0}(e) \cup \mc{S}_{5, 1}(e)$ which are $O(n^2)$ by part (\ref{lem:quasi-random-a-b}) and then choosing a color $c'$.  For $j=6$ using part (\ref{lem:quasi-random-a-b}) we similarly have  $\Delta(\mc{D}^{(6)}) = O(n^5) =o( d_B^5 \log d_B)$. Thus \ref{cond:c1b} holds for any positive $\alpha$.

Now we check \ref{cond:c2b}. For $j=2$ there is no possible value for $j'$ so the statement is vacuously true. For $j=4$ it suffices to check that we have
\[
\Delta_{2}(\mc{C}_B^{(4)}) =O(n), \qquad \Delta_{3}(\mc{C}_B^{(4)}) =O(n^{1-\eps_A^3}).
\]
Indeed, consider the case of a 4-conflict in $\mc{C}_B$ arising from some $S \in \mc{S}_{4, 0}(e)$. If we fix $j'=2$ of these colored edges then we have fixed either $2$ colors and $3$ vertices or $1$ color and $4$ vertices, and in either case there are $O(n)$ ways to complete the alternating 4-cycle. If we fix $j'=3$ colored edges they are in at most one alternating 4-cycle. Now consider the case of a 4-conflict in $\mc{C}_B$ arising from some $S \in \mc{S}_{5, 1}(e)$. We can assume that this conflict does not make an alternating 4-cycle since we just handled that case (then we would have $S \in \mc{S}_{4, 0}(e)$). Thus the 4 colored edges in $K_n$ corresponding this conflict form an alternating 4-path. If we fix $j'=2$ of these colored edges there are $O(n)$ ways to complete the conflict. If we fix $j'=3$ of them then by Lemma \ref{lem:quasi-random} part (\ref{lem:quasi-random-deg}) there are $O(n^{1-\eps_A^3})$ ways to complete the conflict. Thus we have checked \ref{cond:c2b} for $j=4$ (this condition just has to hold for some $\eps=\eps_B>0$ and we can choose $\eps_B \ll \eps_A, \d$). 

Now we check \ref{cond:c2b} for $j=6$. It suffices to check that 
\[
\Delta_{2}(\mc{C}_B^{(6)}) =O(n^{3}), \qquad \Delta_{3}(\mc{C}_B^{(6)}) =O(n^{2}), \qquad \Delta_{4}(\mc{C}_B^{(6)}) =O(n), \qquad \Delta_{5}(\mc{C}_B^{(6)}) =O(1).
\]
Indeed, any 6-conflict corresponds to 3 pairs of edges, where each pair is a monochromatic matching. Any two of these pairs forms an alternating 4-path (if they formed an alternating 4-cycle then this conflict would not be minimal). Together, these edges have 3 distinct colors and span 5 vertices. If we fix $j'=2$ colored edges in such a conflict then we have fixed either 3 vertices and 2 colors or 4 vertices and 1 color, justifying the first bound above. Fixing $j'=3$ colored edges fixes at least 4 vertices and 2 colors, justifying the second bound above. If we fix $j'=4$ colored edges in our conflict, then we have fixed all 5 vertices and 2 colors or 4 vertices and 3 colors, justifying the third bound. Finally for $j'=5$ we fix all 5 vertices and 3 colors. Thus \ref{cond:c2b} holds. 

Now we check \ref{cond:c3b}. We fix a vertex and a non-incident edge in $\mc{H}_B$. If our vertex is in $Y$ then it is in at most one edge of $\mc{H}_B$ and we are done. So assume our fixed vertex is say $e \in X$, and our fixed edge is say $f_c$. The only edge containing $e$ in $\mc{H}_B$ that could form a conflict with $f_c$ is $e_c$ since 2-conflicts only have one color. Thus \ref{cond:c3b} holds. 

Now we check \ref{cond:c4b}. Fix $e_c, e'_c \in E(\mc{H}_B)$. By Lemma \ref{lem:quasi-random} part (\ref{lem:quasi-random-x-lem}), 
\[
\big|\big\{e''_c \in E(\mc{H}_B) : \{e_c, e''_c\}\in \mc{D} \mbox{ and } \{e'_c, e''_c\}\in \mc{D}\big\}\big| \le n^{1-\delta/2}.
\]
Thus \ref{cond:c4b} holds. 

By Theorem \ref{thm:B}, we get a $\mc{C}_B$-free $X$-perfect matching of $\mc{H}_B$, which corresponds to a way to color the remaining uncolored edges using $|C_{B1}|+|C_{B2}| = 4n \log^{-1/10} = o(n)$ colors. In Phase A we used $6n/7 +o(n)$ colors. Thus we have proved Theorem \ref{thm:main}. 

\section{Concluding remarks}
As we were finalizing this manuscript for submission, we noticed that Joos, Mubayi and Smith \cite{JMS24} very recently proved a theorem which can be applied as a black box to do two-phase colorings all at once. This black box theorem can be used to reprove several of the recent generalized Ramsey results that appeared in \cite{BBHZ24, BCDP22, BHZ23, GHPSZ23, LM24}. 

However, it seems to us that the black box in \cite{JMS24} is not quite sufficient to obtain the main result of this paper (although a slightly improved version might be sufficient). Indeed, the proof method in \cite{JMS24} is essentially a generalized two-phase proof where the first phase is an application of (a version of) Theorem \ref{thm:A} and the second phase is the Local Lemma. As we pointed out, such a proof technique would not work for us which is why our second phase was instead an application of Theorem \ref{thm:B}. The difficulty seems to be related to the sharable edges, which are a feature that was not present in any of the recent generalized Ramsey results.  Thus perhaps it is not surprising that when we try to input our setup into the new result in \cite{JMS24}, it looks like it does not satisfy all the conditions for the theorem to apply. 

We believe the framework from \cite{JMS24} can be modified in the spirit of our proof technique to obtain an improved result that would reprove our main result.  For example, we find it likely one could apply our present proof technique (i.e.\ a two-phase coloring applying Theorem \ref{thm:A} and then Theorem \ref{thm:B} instead of using the Local Lemma to finish) in a more general setting to obtain a result that could be applied to reprove Theorem \ref{thm:main}.


\appendix
\section{Quasirandom properties from Phase A}\label{sec:quasirandom}

 Our main tools will be some extensions to Theorem \ref{thm:A}. We now start introducing these tools. 

Let $w\colon \binom{\mc{H}}{j}\rightarrow[0,\ell]$ where $j\in  \mathbb{N}$. We call $w$ a \emph{test function} for $\mc{H}$ if $w(F)=0$ whenever $F\in\binom{\mc{H}}{j}$ is not a matching.
We say that $w$ is $j$-uniform.
In general, for a function $w\colon A\rightarrow\mathbb{R}$ and a finite set $X\subset A$, we define $w(X):=\sum_{x\in X}w(x)$.
For a $j$-uniform test function $w$, we also use $w$ to denote the extension of $w$ to arbitrary subsets $F \subseteq E(\mc{H})$ where we define $w(F):=w(\binom{F}{j}) = \sum_{F' \in \binom{F}{j}}w(F')$.
We will see that we can formally state many quasirandom properties of our matching $\mc{M}_A$ by defining an appropriate test function $w$ and then estimating $w(\mc{M}_A)$. To estimate it, we have the following extension of Theorem \ref{thm:B}. We use Theorem~2.1 in \cite{JM22} which is the straightforward adaptation of Theorem~3.3 in \cite{GJKKL} to the setting of Theorem \ref{thm:A}.

\begin{theorem}[{\cite[Theorem~2.1]{JM22}}]\label{thm:test}
	For all $k,\ell\geq 2$, there exists $\eps_0>0$ such that for all $\eps\in(0,\eps_0)$, there exists $d_0$ such that the following holds for all $d\geq d_0$.
	Suppose $\mc{C}$ is an $\ell$-bounded conflict system on a $k$-uniform hypergraph $\mc{H}$ satisfying all the assumptions of Theorem \ref{thm:A}.
	Suppose $\mathscr{W}$ is a set of test functions of uniformity at most $\ell$ with $|\mathscr{W}|\leq \exp(d^{\eps^3})$ where for each $j$-uniform $w \in \mathscr{W}$ we have
 \begin{enumerate}[label=\textup{(W\arabic*)}]
		\item\label{item: trackable size} $w(\mc{H})\geq d^{j+\eps}$;
		\item\label{item: trackable degrees} $w(\{ F\in\binom{\mc{H}}{j} \colon F\supseteq F' \} )\leq w(\mc{H})/d^{j'+\eps}$ for all $j'\in[j-1]$ and $F'\in\binom{\mc{H}}{j'}$;
		\item\label{item: trackable neighborhood} $|(\mc{C}_e)^{(j')}\cap (\mc{C}_f)^{(j')}|\leq d^{j'-\eps}$ for all $e,f\in \mc{H}$ with $w(\{F\in\binom{\mc{H}}{j}\colon e,f\in F\})>0$ 
		and all $j'\in[\ell-1]$;
		\item\label{item: trackable no conflicts} $w(F)=0$ for all $F\in\binom{\mc{H}}{j}$ that are not $\mc{C}$-free.
\end{enumerate}
	Then, there exists a $\mc{C}$-free matching $\mc{M}\subset \mc{H}$ of size at least $(1-d^{-\eps^3})n/k$ with $w(\mc{M})=(1\pm d^{-\eps^3})d^{-j}w(\mc{H})$ for all $j$-uniform $w\in\mathscr{W}$.
\end{theorem}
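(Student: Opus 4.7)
My plan is to extend the proof of Theorem~\ref{thm:A}, which itself proceeds via a random greedy conflict-free matching process analyzed by the differential equation method. The process starts with $\mc{M}_0 = \emptyset$ and at each step $i$ picks $e_{i+1}$ uniformly from $\mc{H}_i$, the set of edges $e$ such that $\mc{M}_i \cup \{e\}$ is both a matching and $\mc{C}$-free. In the proof of Theorem~\ref{thm:A} one shows via Freedman's martingale inequality that $|\mc{H}_i|$ and various conflict-extension counts follow their expected deterministic trajectories; hypotheses (H1A), (H2A), (C1A), (C2A) are designed precisely to permit this analysis, and at the end $\mc{M}$ has size at least $(1-d^{-\eps^3})n/k$.

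To add the test-function conclusion, I would augment the collection of tracked quantities. For each $j$-uniform $w \in \mathscr{W}$ I would track (a version of) $W_i := w(\binom{\mc{H}_i}{j})$. The heuristic is that a single surviving edge ends up in $\mc{M}$ with probability roughly $1/d$, so a fixed disjoint $j$-tuple survives with probability $\approx d^{-j}$, suggesting $w(\mc{M}) \approx d^{-j} w(\mc{H})$. To make this rigorous, compute the one-step drift of $W_i$, which to leading order decreases by the deterministic factor corresponding to the probability that $e_{i+1}$ either enters or destroys a $j$-tuple contributing to $W_i$. Then apply Freedman to the martingale $W_i - \mathbb{E}[W_i]$, tracking it jointly with the quantities already monitored in the proof of Theorem~\ref{thm:A}.

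The four conditions (W1)--(W4) are exactly what the Freedman step requires. (W1) guarantees $W_0 = w(\mc{H})$ is large enough that the allowed relative error dominates the absolute concentration error; (W2) bounds the one-step change of $W_i$ when any single edge is removed (this controls the martingale increments, since an edge removal wipes out at most $w(\{F : F \supseteq F'\})$ mass for the $F' \subseteq F$ containing that edge); (W3) controls the ``cascade'' of $j$-tuple destructions triggered when $e_{i+1}$ forces removal of further edges via the conflict system; and (W4) rules out $w$-mass on tuples that could never contribute to $w(\mc{M})$, so that $w(\mc{M}) = w(\binom{\mc{M}}{j})$ is indeed the surviving residual of $W_i$. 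A union bound over all $|\mathscr{W}| \le \exp(d^{\eps^3})$ test functions is absorbable by the per-step Freedman failure probability, which in this regime is of the form $\exp(-d^{2\eps^3})$ or smaller.

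The main obstacle is the cascade bookkeeping governed by (W3): a single greedy choice $e_{i+1}$ can indirectly eliminate many $j$-tuples from $\binom{\mc{H}_i}{j}$, because $e_{i+1}$ together with $\mc{M}_i$ may complete a conflict with many other edges $f$, thereby removing every $j$-tuple containing such an $f$. Without the bound $|(\mc{C}_e)^{(j')} \cap (\mc{C}_f)^{(j')}| \le d^{j'-\eps}$ from (W3) one cannot show that this collateral damage to $W_i$ stays within the $W_0/d^{j+\eps}$ budget needed for the martingale increments to be small relative to the target drift, and the concentration argument collapses. Once (W1)--(W4) are used to certify the requisite bounds on drift and increments, the concentration of $W_i$ proceeds in parallel with the existing analysis and yields the conclusion $w(\mc{M}) = (1 \pm d^{-\eps^3}) d^{-j} w(\mc{H})$ for all $w \in \mathscr{W}$ simultaneously.
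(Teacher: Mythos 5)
The paper does not prove this statement: it is imported verbatim as a black box, cited as \cite[Theorem~2.1]{JM22}, which in turn is a direct adaptation of Theorem~3.3 of \cite{GJKKL}. So there is no in-paper argument to compare yours against; the only question is whether your sketch would reconstruct the proof from the source. Your outline does correctly identify the architecture used there --- the random greedy conflict-free process, dynamic concentration of tracked quantities via a martingale inequality, and a union bound over the $\exp(d^{\eps^3})$ test functions --- but as written it is a plan with a substantive gap rather than a proof.

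The gap is in what you propose to track. The quantity $W_i = w\bigl(\tbinom{\mc{H}_i}{j}\bigr)$ (weight of $j$-tuples of still-available edges) does not by itself determine $w(\mc{M})$: the tuples contributing to $w(\mc{M})$ are those all of whose edges are eventually \emph{selected}, not those whose edges merely remain available, and a tuple leaves $\tbinom{\mc{H}_i}{j}$ the moment its first edge is chosen. The actual proof tracks, for every $j'\le j$ and every $F'\in\tbinom{\mc{M}_i}{j'}$ already placed in the matching, the total weight of extensions of $F'$ to $j$-tuples whose remaining $j-j'$ edges are still available, and proves dynamic concentration for this whole hierarchy simultaneously; the case $j'=j$ at the final step is $w(\mc{M})$. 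Condition \ref{item: trackable degrees} is what makes each level of this hierarchy trackable (it bounds both the initial value of each level relative to $w(\mc{H})$ and the one-step increments), and condition \ref{item: trackable neighborhood} is not primarily about cascades of deletions but about ensuring that for two edges $e,f$ in a common positively-weighted tuple the conflicts threatening $e$ and those threatening $f$ barely overlap, so that their survival probabilities are asymptotically independent and the drift computation gives the factor $d^{-j}$ rather than something larger. Without setting up this hierarchy and carrying out the drift and increment estimates at each level, the concentration argument you describe cannot be closed.
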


 Recall that $e \in E'' \cup E'''$, let $\mc{S}_{a, b}(e)$ be the family of sets $S \subseteq V(K_n)$ with $|S|=a$, having $b$ color repetitions in the coloring from Phase A, and containing an uncolored edge $e'$ which is disjoint from $e$ and which belongs to the same set (either $E''$ or $E'''$) as $e$.

\setcounter{lemma}{0}
\begin{lemma}
Our matching $\mc M_A$ and corresponding partial $(5,8)$-coloring from Phase A can be taken to satisfy the following:
\begin{enumerate}
\item  each vertex $v$ has uncolored degree $O(n^{1-\eps_A^3})$; 
\item for each shareable edge $uv \in E''$, the number of gadgets containing $uv$ as in Figure~\ref{fig:scaff} is $O(n^\delta)$ 
\item for each uncolored edge $uv$ and every $(a, b) \in \{(4, 0), (4, 1), (5,0), (5, 1), (5, 2)\}$, we have $|\mc{S}_{a, b}(uv)|=O(n^{a-b-2})$
\item  For each pair $e_1, e_2$ of uncolored edges, there are at most $O(n^{1-\delta/2})$ pairs of sets $S_1, S_2$ with the following properties. $S_j \in \mc{S}_{4,1}(e_j) \cup \mc{S}_{5,2}(e_j)$ for $j=1,2$. Furthermore there is a single edge $e' \subseteq S_1 \cap S_2$ which is playing the role of the second uncolored edge (besides $e_1$) in $S_1$ and (besides $e_2$) in $S_2$. 
\end{enumerate}
\end{lemma}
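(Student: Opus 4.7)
Our plan is to apply Theorem~\ref{thm:test}, the test-function extension of Theorem~\ref{thm:A}, with a carefully chosen collection $\mathscr{W}$ of test functions, so that the matching $\mc{M}_A$ produced by Phase~A simultaneously satisfies all four properties. For each quantity $Q$ we wish to bound, we encode $Q$ (or an upper bound on it) as $w(\mc{M}_A)$ for a test function $w$ on $\mc{H}_A$ of uniformity $j \le 4$, after which Theorem~\ref{thm:test} immediately yields $w(\mc{M}_A) = (1 \pm d_A^{-\eps_A^3}) d_A^{-j} w(\mc{H}_A)$. The work then splits into (i)~computing $w(\mc{H}_A)$ by a first-moment calculation using the independent random structure of $E'$ and $V'$, and (ii)~verifying hypotheses~\ref{item: trackable size}--\ref{item: trackable no conflicts}. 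Of these, \ref{item: trackable neighborhood} is a restatement of~\ref{cond:c2a} (already verified in Claim~\ref{claim:C2A}), \ref{item: trackable no conflicts} can be enforced by zeroing out $w$ on any tuple whose union contains a conflict, \ref{item: trackable degrees} is a degree check in the spirit of Claim~\ref{claim:H2A}, and~\ref{item: trackable size} reduces to a numerical inequality among the exponents of $n$, $p$, and $d_A$ that in every case holds under~\eqref{eqn:ed}.

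For part~(1), Chernoff applied to the independent construction of $E''$ gives $|\{e \in E'' : v \in e\}| = O(n^{1-\delta})$ w.h.p., so it suffices to bound the $E'''$-degree at $v$. Define the $1$-uniform test function $w_v(\Gamma) := |\{f \in E(\Gamma) : v \in f\}|$; then $\deg_{E'}(v) - w_v(\mc{M}_A)$ equals the $E'''$-degree of $v$. Summing $\deg_{\mc{H}_A}(f)$ over $E'$-edges $f$ at $v$ yields $w_v(\mc{H}_A) = (1 + o(1)) d_A \deg_{E'}(v)$, so Theorem~\ref{thm:test} gives $w_v(\mc{M}_A) = (1 \pm d_A^{-\eps_A^3}) \deg_{E'}(v)$, and hence the $E'''$-degree at $v$ is $O(n \cdot d_A^{-\eps_A^3}) = O(n^{1-7\eps_A^3})$, well within the required bound. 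Part~(2) is analogous: for each shareable $uv \in E''$, let $w_{uv}$ assign weight $1$ to every gadget in which $u$ and $v$ occupy the $\{\gamma_2, \gamma_4\}$ non-edge role. Then $w_{uv}(\mc{H}_A) = \Theta(n^{7-13\delta})$ (one less $p$-factor than $d_A$ since $uv$ is a fixed non-edge), so $w_{uv}(\mc{M}_A) = \Theta(n^\delta)$; hypothesis~\ref{item: trackable size} here reduces to $\delta \ge 7\eps_A$.

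For part~(3), we split $|\mc{S}_{a,b}(uv)|$ according to the structural source of the $b$ color repetitions in $S$. The case $b = 0$ is trivial because $|\mc{S}_{a,0}(uv)| \le \binom{n-2}{a-2} = O(n^{a-2})$. For $b \ge 1$, each repetition in $S$ is contributed either by a single gadget $\Gamma$ (via a $c_2$- or $c_2'$-colored monochromatic $2$-path inside $\Gamma$, or a pair of $c_1$-colored matching edges of $\Gamma$) or by two distinct gadgets whose monochromatic edges in $S$ happen to share a color. For each such structural type we define a test function of uniformity $j$ equal to the number of gadgets involved, whose weight on a tuple of gadgets counts the number of extensions to a set $S$ of size $a$ containing $uv$ together with a second disjoint uncolored edge. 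First-moment calculations show that in each case, dividing $w(\mc{H}_A)$ by $d_A^j$ recovers $w(\mc{M}_A) = O(n^{a-b-2})$: each gadget contributes $d_A$, each ``matched-color'' constraint between distinct gadgets contributes $n^{-1}$ (since $|C_{A1}|, |C_{A2}| = \Theta(n)$), and the requirement of a second uncolored edge contributes either a factor of $p$ (if $e' \in E''$) or, via part~(1), an even stronger saving (if $e' \in E'''$).

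Part~(4) is the main obstacle. Each pair $(S_1, S_2)$ we wish to count corresponds, via the repetition structure of the $S_j$, to a joint configuration of at most four gadgets of $\mc{M}_A$ whose intersection pattern is dictated by the shared uncolored edge $e' \subseteq S_1 \cap S_2$. The plan is to enumerate all such structural types (determined by which gadgets contribute to which repetitions, and how these gadgets overlap at $e'$) and, for each, define a multi-uniform test function of uniformity $\le 4$ on $\mc{H}_A$ counting the corresponding ordered tuples of gadgets. With $e_1, e_2$ fixed, the existence of an uncolored $e'$ in $S_1 \cap S_2$ forces at least one $p$-factor (and typically stronger savings via part~(1)), so dividing by the appropriate power of $d_A$ places each structural type safely below $O(n^{1-\delta/2})$; summing the finitely many types preserves the bound. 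The delicate step is verifying~\ref{item: trackable size} for each of these test functions, because certain ``near-degenerate'' configurations---those in which gadgets contributing to $S_1$ and $S_2$ overlap in vertices beyond $e'$---superficially appear to have too few degrees of freedom for $w(\mc{H}_A)$ to exceed $d_A^{j + \eps_A}$; careful bookkeeping of the free parameters in each structural type shows that every such verification again reduces to the hierarchy $\eps_A \ll \delta \ll 1$ given by~\eqref{eqn:ed}.
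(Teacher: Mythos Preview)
Your treatment of parts~(1) and~(2) matches the paper's, and your plan for part~(3) is workable (though the paper handles all subcases except the alternating-4-path configuration by direct combinatorial arguments rather than test functions). However, there is a genuine gap in part~(4), and a smaller one in your handling of~\ref{item: trackable neighborhood}.

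For part~(4), the test-function approach via Theorem~\ref{thm:test} cannot succeed, because condition~\ref{item: trackable size} fails for the generic configurations. Take for instance the case where $e_1, e_2, e'$ are pairwise disjoint and each of $S_1, S_2 \in \mc{S}_{4,1}$ is witnessed by a monochromatic 2-matching from two gadgets. With $e_1, e_2$ fixed, specifying the four gadgets requires choosing $18$ vertices and $10$ colors, and each gadget carries a factor $p^{14}$ from its sharable edges and non-touching pairs; hence $w(\mc{H}_A) = \Theta(n^{28} p^{56}) = \Theta(d_A^4)$. But~\ref{item: trackable size} demands $w(\mc{H}_A) \ge d_A^{4+\eps_A}$, which fails by a factor of $d_A^{\eps_A}$ regardless of how $\eps_A, \delta$ are balanced. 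The extra $p$-factor you invoke from $e' \in E''$ only makes $w(\mc{H}_A)$ \emph{smaller}, worsening the failure. (Your side remark that $e' \in E'''$ gives an ``even stronger saving'' is also backwards: part~(1) yields only an $n^{-\eps_A^3}$ saving, weaker than $p = n^{-\delta}$.) The underlying issue is structural: Theorem~\ref{thm:test} outputs an asymptotic $w(\mc{M}_A) \sim d_A^{-j} w(\mc{H}_A)$ and therefore requires this quantity to be growing, whereas in part~(4) the target bound is $o(n)$ and for many configurations the heuristic count is $O(1)$ or smaller. The paper avoids this by invoking a different tool, Lemma~6.5 of~\cite{GJKKL}, which gives only an upper bound on the number of edges of a $(D,1/d)$-spread hypergraph $\mc{X}$ lying in $\mc{M}$, with no lower-bound hypothesis analogous to~\ref{item: trackable size}. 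This is what makes part~(4) go through.

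Separately, \ref{item: trackable neighborhood} is not a restatement of~\ref{cond:c2a}: the former bounds $j'$-sets $S$ such that $S\cup\{e\}$ and $S\cup\{f\}$ are \emph{both} conflicts for two specified edges $e,f$ appearing together in a positive-weight tuple, while the latter bounds conflicts containing a fixed $j'$-set. The paper's verification (for the $j=4$ test function in part~(3)) exploits that $e$ and $f$, being gadgets in a common alternating-4-path configuration, share vertices in $K_n$; this extra piece of information is what drives the exponent down by one.
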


\subsection{Proof of Lemma~\ref{lem:quasi-random}(\ref{lem:quasi-random-deg})}

For fixed $v$, let $w: E(\mc H_A) \rightarrow [0,3]$ be the 1-uniform test function that assigns an edge $\Gamma \in E(\mc H_A)$ to the number of colored edges of $K_n$ in $\Gamma$ that contain $v$.

Next we estimate $w(\mc{H}_A)$. In the following calculation, the first term arises when $v = \gamma_i$ for $i=1,2,5,6$, which is adjacent to two colored edges (one from $C_{A1}$ and one from $C_{A2}$). The second term arises when $v=\gamma_i$ for $i=3,4$, which is adjacent to three colored edges (refer to Figure~\ref{fig:G}). So using McDiarmid's inequality gives us
\begin{align*}
w(\mc{H}_A) &= 2\cdot n^3 \binom {n}2 |C_{A1}||C_{A2}|^2 p^{14} + 3 \cdot (n-1) \binom{n-2}{2}^2 |C_{A1}||C_{A2}|^2 p^{14}\pm n^8p^{14.5} \\
&= \frac{27}{196}n^8p^{14} \pm n^8p^{14.5}.
\end{align*}
Then 
Clearly, $w(\mc{H}_A)=\Omega(d_A^n)\ge d_A^{1+\varepsilon_A}$, verifying \ref{item: trackable size}. Notice that since $j=1$, conditions \ref{item: trackable degrees}--\ref{item: trackable no conflicts} are satisfied trivially. 

Thus, we have 
$$w(\mc{M}_A)=(1\pm d_A^{-\varepsilon_A^3})\frac{\frac{27}{196}n^8p^{14} \pm n^8p^{14.5}}{\frac{27}{196}n^7p^{14} \pm n^7p^{14.5}}$$

For sufficiently large $n$, by \eqref{eqn:ed}, we have that $w(\mc{M}_A)= (1\pm n^{-\eps_A^3})n$.
Thus, for any fixed vertex, the uncolored degree will be $(n-1)-(1\pm n^{-\eps_A^3})n=O(n^{1-\eps_A^3})$.

\subsection{Proof of Lemma~\ref{lem:quasi-random}(\ref{lem:quasi-random-shareable})}

For fixed $u,v$, let $w: E(\mc H_A) \rightarrow [0,1]$ to be the test function that assigns an edge $\Gamma \in E(\mc H_A)$ to $1$ if $uv$ is a shareable edge of $\Gamma$ of the form $\gamma_i\gamma_j$ for $ij=14, 24, 35, 36$. Then 
$$
w(\mc{H}_A) = \Theta(n^4n^3p^{13}) = \Theta(d_A/p)
$$
and by \eqref{eqn:ed} we have $w(\mc{H}_A) = \Omega(d_An^{\delta}) \ge d_A^{1+\varepsilon_A}$, verifying \ref{item: trackable size}. Since $j=1$, conditions \ref{item: trackable degrees}--\ref{item: trackable no conflicts} are satisfied trivially.

Therefore, 
$$
w(\mc{M}_A)=(1\pm d_A^{-\varepsilon_A^3})\frac{O(d_A/p)}{d_A} = O(n^{\delta}).
$$

\subsection{Proof of Lemma~\ref{lem:quasi-random}(\ref{lem:quasi-random-a-b})}

\begin{figure}[ht]
\begin{subfigure}{.24\textwidth}
\centering
\begin{tikzpicture}[scale=1]
\foreach \i in {0,1,2,3}
	\node[vert] (\i) at ({cos(360 * \i/4+45)}, {sin(360*\i/4+45}) {};
\foreach \x/\y in {0/1}
	\draw [dashed] (\x) -- node[right] {} (\y) ; 
\node[vert, label=left:$u$] (u) at (2) {};
\node[vert, label=left:$v$] (v) at (3) {};
\end{tikzpicture}
\caption{}
\end{subfigure}
\vspace{3ex}
\begin{subfigure}{.24\textwidth}
\centering
\begin{tikzpicture}[scale=1]
\foreach \i in {0,1,2,3,4}
	\node[vert] (\i) at ({cos(360 * \i/5+18)}, {sin(360*\i/5+18}) {};
\foreach \x/\y in {0/1, 1/2, 0/2}
	\draw [dashed] (\x) -- node[right] {} (\y) ; 
\node[vert, label=left:$u$] (u) at (3) {};
\node[vert, label=right:$v$] (v) at (4) {};
\end{tikzpicture}
\caption{}
\end{subfigure}
\begin{subfigure}{.24\textwidth}
\centering
\begin{tikzpicture}[scale=1]
\foreach \i in {0,1,2,3}
	\node[vert] (\i) at ({cos(360 * \i/4+45)}, {sin(360*\i/4+45}) {};
\foreach \x/\y in {0/3, 1/2}
	\draw [blue] (\x) -- node[right] {} (\y) ; 
\foreach \x/\y in {0/1}
	\draw [dashed] (\x) -- node[right] {} (\y) ; 
\node[vert, label=left:$u$] (u) at (2) {};
\node[vert, label=left:$v$] (v) at (3) {};
\end{tikzpicture}
\caption{}
\end{subfigure}
\begin{subfigure}{.24\textwidth}
\centering
\begin{tikzpicture}[scale=1]
\foreach \i in {0,1,2,3,4}
	\node[vert] (\i) at ({cos(360 * \i/5+18)}, {sin(360*\i/5+18}) {};
\foreach \x/\y in {1/2, 2/3}
	\draw [blue] (\x) -- node[right] {} (\y) ; 
\foreach \x/\y in {0/1, 0/2}
	\draw [dashed] (\x) -- node[right] {} (\y) ; 
\foreach \x/\y in {1/3}
	\draw [black] (\x) -- node[right] {} (\y) ;
\node[vert, label=left:$u$] (u) at (3) {};
\node[vert, label=right:$v$] (v) at (4) {};
\end{tikzpicture}
\caption{}
\end{subfigure}
\begin{subfigure}{.24\textwidth}
\centering
\begin{tikzpicture}[scale=1]
\foreach \i in {0,1,2,3,4}
	\node[vert] (\i) at ({cos(360 * \i/5+18)}, {sin(360*\i/5+18}) {};
\foreach \x/\y in {2/3, 0/4}
	\draw [blue] (\x) -- node[right] {} (\y) ; 
\foreach \x/\y in {0/1, 0/2, 1/2}
	\draw [dashed] (\x) -- node[right] {} (\y) ; 
\node[vert, label=left:$u$] (u) at (3) {};
\node[vert, label=right:$v$] (v) at (4) {};
\end{tikzpicture}
\caption{}
\end{subfigure}
\vspace{3ex}
\begin{subfigure}{.24\textwidth}
\centering
\begin{tikzpicture}[scale=1]
\foreach \i in {0,1,2,3,4}
	\node[vert] (\i) at ({cos(360 * \i/5+18)}, {sin(360*\i/5+18}) {};
\foreach \x/\y in {1/2, 0/4}
	\draw [blue] (\x) -- node[right] {} (\y) ; 
\foreach \x/\y in {0/1, 0/2}
	\draw [dashed] (\x) -- node[right] {} (\y) ; 
\node[vert, label=left:$u$] (u) at (3) {};
\node[vert, label=right:$v$] (v) at (4) {};
\end{tikzpicture}
\caption{}
\end{subfigure}
\begin{subfigure}{.24\textwidth}
\centering
\begin{tikzpicture}[scale=1]
\foreach \i in {0,1,2,3,4}
	\node[vert] (\i) at ({cos(360 * \i/5+18)}, {sin(360*\i/5+18}) {};
\foreach \x/\y in {1/2, 2/3}
	\draw [blue] (\x) -- node[right] {} (\y) ; 
\foreach \x/\y in {1/3}
	\draw [black] (\x) -- node[right] {} (\y) ;
\draw [blue] (4) -- (0) ;
\foreach \x/\y in {0/1, 0/2}
	\draw [dashed] (\x) -- node[right] {} (\y) ; 
\node[vert, label=left:$u$] (u) at (3) {};
\node[vert, label=right:$v$] (v) at (4) {};
\end{tikzpicture}
\caption{}
\end{subfigure}
\begin{subfigure}{.24\textwidth}
\centering
\begin{tikzpicture}[scale=1]
\foreach \i in {0,1,2,3,4}
	\node[vert] (\i) at ({cos(360 * \i/5+18)}, {sin(360*\i/5+18}) {};
\foreach \x/\y in {1/2, 2/3}
	\draw [blue] (\x) -- node[right] {} (\y) ; 
\foreach \x/\y in {0/1, 4/0}
	\draw [red] (\x) -- node[right] {} (\y) ; 
\foreach \x/\y in {0/2}
	\draw [dashed] (\x) -- node[right] {} (\y) ;
 \foreach \x/\y in {1/3, 1/4}
	\draw [black] (\x) -- node[right] {} (\y) ;
\node[vert, label=left:$u$] (u) at (3) {};
\node[vert, label=right:$v$] (v) at (4) {};
\end{tikzpicture}
\caption{}
\end{subfigure}
\begin{subfigure}{.24\textwidth}
\centering
\begin{tikzpicture}[scale=1]
\foreach \i in {0,1,2,3,4}
	\node[vert] (\i) at ({cos(360 * \i/5+18)}, {sin(360*\i/5+18}) {};
\foreach \x/\y in {1/2, 2/3}
	\draw [blue] (\x) -- node[right] {} (\y) ; 
\foreach \x/\y in {1/3, 4/0}
	\draw [red] (\x) -- node[right] {} (\y) ; 
\foreach \x/\y in {0/1, 0/2}
	\draw [dashed] (\x) -- node[right] {} (\y) ; 
\node[vert, label=left:$u$] (u) at (3) {};
\node[vert, label=right:$v$] (v) at (4) {};
\end{tikzpicture}
\caption{}
\end{subfigure}
\begin{subfigure}{.24\textwidth}
\centering
\begin{tikzpicture}[scale=1]
\foreach \i in {0,1,2,3,4}
	\node[vert] (\i) at ({cos(360 * \i/5+18)}, {sin(360*\i/5+18}) {};
\foreach \x/\y in {1/2, 2/3}
	\draw [blue] (\x) -- node[right] {} (\y) ; 
\foreach \x/\y in {1/4, 3/0}
	\draw [red] (\x) -- node[right] {} (\y) ;
\foreach \x/\y in {0/1, 0/2}
	\draw [dashed] (\x) -- node[right] {} (\y) ; 
\foreach \x/\y in {1/3}
	\draw [black] (\x) -- node[right] {} (\y) ;
\node[vert, label=left:$u$] (u) at (3) {};
\node[vert, label=right:$v$] (v) at (4) {};
\end{tikzpicture}
\caption{}
\end{subfigure}
\begin{subfigure}{.24\textwidth}
\centering
\begin{tikzpicture}[scale=1]
\foreach \i in {0,1,2,3,4}
	\node[vert] (\i) at ({cos(360 * \i/5+18)}, {sin(360*\i/5+18}) {};
\foreach \x/\y in {1/2, 2/3}
	\draw [blue] (\x) -- node[right] {} (\y) ; 
\foreach \x/\y in {0/1, 2/4}
	\draw [red] (\x) -- node[right] {} (\y) ; 
 \foreach \x/\y in {0/2}
	\draw [dashed] (\x) -- node[right] {} (\y) ;
\foreach \x/\y in {1/3}
	\draw [black] (\x) -- node[right] {} (\y) ;
\node[vert, label=left:$u$] (u) at (3) {};
\node[vert, label=right:$v$] (v) at (4) {};
\end{tikzpicture}
\caption{}
\end{subfigure}
\vspace{3ex}
\begin{subfigure}{.24\textwidth}
\centering
\begin{tikzpicture}[scale=1]
\foreach \i in {0,1,2,3,4}
	\node[vert] (\i) at ({cos(360 * \i/5+18)}, {sin(360*\i/5+18}) {};
\foreach \x/\y in {1/2, 2/3}
	\draw [blue] (\x) -- node[right] {} (\y) ; 
\foreach \x/\y in {0/2, 1/4}
	\draw [red] (\x) -- node[right] {} (\y) ; 
\foreach \x/\y in {0/1}
	\draw [dashed] (\x) -- node[right] {} (\y) ;
\foreach \x/\y in {1/3}
	\draw [black] (\x) -- node[right] {} (\y) ;
\node[vert, label=left:$u$] (u) at (3) {};
\node[vert, label=right:$v$] (v) at (4) {};
\end{tikzpicture}
\caption{}
\end{subfigure}
\begin{subfigure}{.24\textwidth}
\centering
\begin{tikzpicture}[scale=1]
\foreach \i in {0,1,2,3,4}
	\node[vert] (\i) at ({cos(360 * \i/5+18)}, {sin(360*\i/5+18}) {};
\foreach \x/\y in {0/2, 0/3}
	\draw [blue] (\x) -- node[right] {} (\y) ; 
\foreach \x/\y in {2/3, 0/1}
	\draw [red] (\x) -- node[right] {} (\y) ; 
\foreach \x/\y in {1/2}
	\draw [dashed] (\x) -- node[right] {} (\y) ; 
\node[vert, label=left:$u$] (u) at (3) {};
\node[vert, label=right:$v$] (v) at (4) {}; 
\end{tikzpicture}
\caption{}
\end{subfigure}
\begin{subfigure}{.24\textwidth}
\centering
\begin{tikzpicture}[scale=1]
\foreach \i in {0,1,2,3,4}
	\node[vert] (\i) at ({cos(360 * \i/5+18)}, {sin(360*\i/5+18}) {};
\foreach \x/\y in {0/2, 0/3}
	\draw [blue] (\x) -- node[right] {} (\y) ; 
\foreach \x/\y in {2/3, 4/0}
	\draw [red] (\x) -- node[right] {} (\y) ; 
\foreach \x/\y in {0/1, 1/2}
	\draw [dashed] (\x) -- node[right] {} (\y) ; 
\node[vert, label=left:$u$] (u) at (3) {};
\node[vert, label=right:$v$] (v) at (4) {}; 
\end{tikzpicture}
\caption{}
\end{subfigure}
\begin{subfigure}{.24\textwidth}
\centering
\begin{tikzpicture}[scale=1]
\foreach \i in {0,1,2,3,4}
	\node[vert] (\i) at ({cos(360 * \i/5+18)}, {sin(360*\i/5+18}) {};
\foreach \x/\y in {2/4, 1/0}
	\draw [blue] (\x) -- node[right] {} (\y) ; 
\foreach \x/\y in {2/3, 4/0}
	\draw [red] (\x) -- node[right] {} (\y) ; 
\foreach \x/\y in {0/2, 1/2}
	\draw [dashed] (\x) -- node[right] {} (\y) ; 
\node[vert, label=left:$u$] (u) at (3) {};
\node[vert, label=right:$v$] (v) at (4) {}; 
\end{tikzpicture}
\caption{}
\end{subfigure}
\begin{subfigure}{.24\textwidth}
\centering
\begin{tikzpicture}[scale=1]
\foreach \i in {0,1,2,3,4}
	\node[vert] (\i) at ({cos(360 * \i/5+18)}, {sin(360*\i/5+18}) {};
\foreach \x/\y in {2/3, 1/0}
	\draw [blue] (\x) -- node[right] {} (\y) ; 
\foreach \x/\y in {1/2, 4/0}
	\draw [red] (\x) -- node[right] {} (\y) ; 
\foreach \x/\y in {0/2}
	\draw [dashed] (\x) -- node[right] {} (\y) ; 
\node[vert, label=left:$u$] (u) at (3) {};
\node[vert, label=right:$v$] (v) at (4) {}; 
\end{tikzpicture}
\caption{}
\end{subfigure}
\caption{Cases for $S \in \mc{S}_{a, b}(uv)$. Dashed edges could play the role of the second uncolored edge $e'$. Triangles are shown with one possible orientation but other orientations are possible. Black is used for edges in triangles whose color does not repeat within $S$.}
\label{fig:ab-types}
\end{figure}
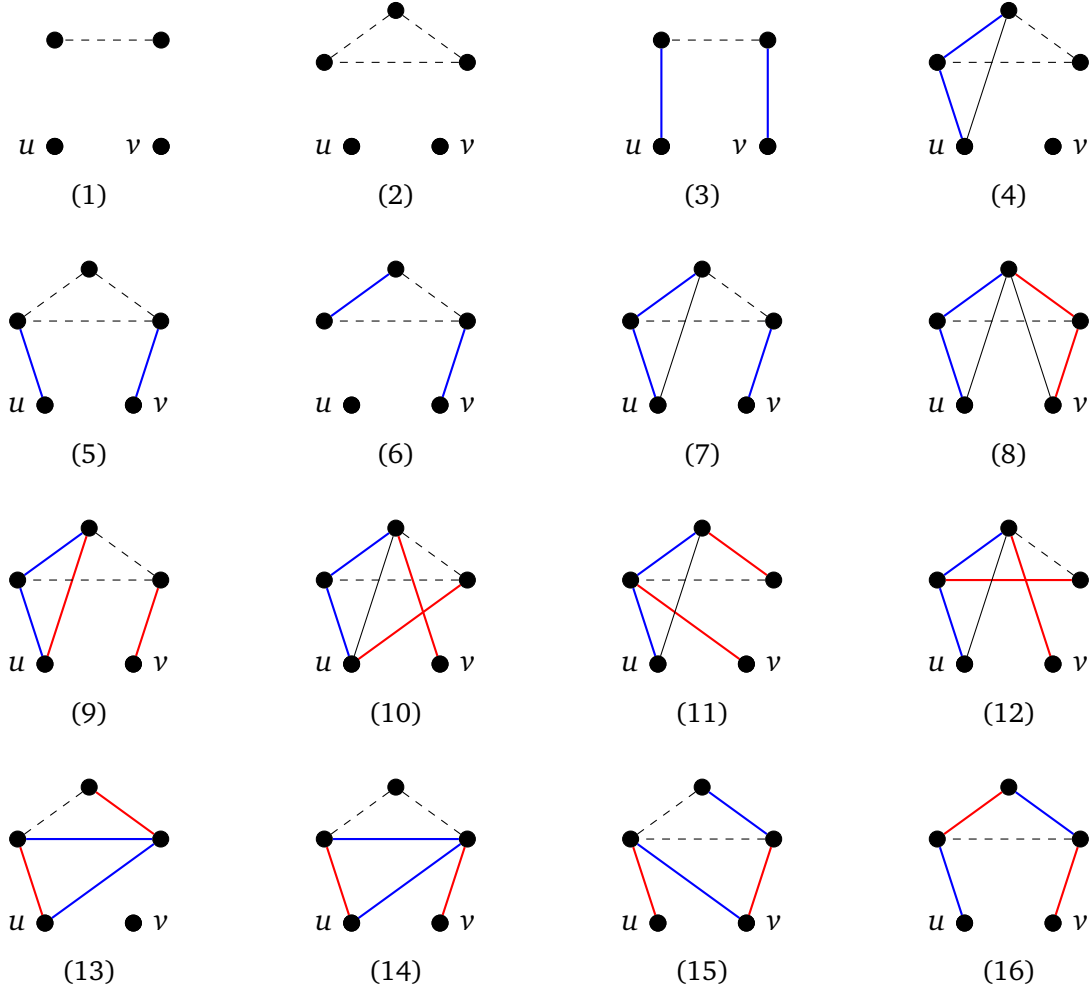

Figure \ref{fig:ab-types} shows representative sets $S \in \mc{S}_{a, b}(uv)$ for all relevant pairs $(a, b)$. Some pairs $(a, b)$ have several subcases, for example the possible cases for pair $(5, 2)$ are in Subfigures (7)--(16). 

Almost all of the cases follow from some easy facts: we have $n$ vertices, $O(n)$ colors, and every vertex has degree at most 2 in every color. Sometimes we also use the fact that between any two vertices there is at most one monochromatic 2-path. Indeed, to see $|\mc{S}_{4, 0}(uv)| = O(n^2)$ and $|\mc{S}_{5, 0}(uv)|=O(n^3)$ (Figures \ref{fig:ab-types}(1) and (2)) we just choose the rest of the vertices in $S$.  To see $|\mc{S}_{4, 1}(uv)| = O(n)$ (Figures \ref{fig:ab-types}(3)) we choose a color which determines $O(1)$ possible sets $S$. Similarly we can see $|\mc{S}_{5, 1}(uv)| = O(n^2)$ (Figures \ref{fig:ab-types}(4)--(6)). The last bound for us to show is $|\mc{S}_{5, 2}(uv)|=O(n)$ (Figures \ref{fig:ab-types}(7)--(16)). The cases shown in (7)--(13) all follow from the easy facts we have used so far. The case in (14)  can only apply when $uv \in E''$ since $uv$ is used as a sharable edge by a gadget. To bound the number of such sets $S$, first we choose the gadget in $O(n^\d)$ ways by Lemma \ref{lem:quasi-random}(\ref{lem:quasi-random-shareable}), and then we choose a sharable edge incident with one of the vertices in the gadget. Since each vertex has sharable degree $O(n^{1-\d})$ the number of such sets $S$ is $O(n^{\d} \cdot n^{1-\d})= O(n)$ so the case in (14) is done. Thus we arrive at the case in (16). Bounding such sets $S$ will require a fresh application of Theorem \ref{thm:test}.

\begin{figure}[ht]
\centering
\begin{tikzpicture}[scale=1]
\begin{scope}[shift={(-1.955,1.425)}, rotate=-90-72]
	\DrawGamma{blue!25}{black!25}{black!25}{$\Gamma_1$}
\end{scope}
\node[label=below:$u$] (u) at (g1) {};
\draw[blue, line width=2pt] (g1) -- (g2);

\begin{scope}[shift={(-1.21,3.73)}, rotate=-90-72-72]
	\DrawGamma{red!25}{black!25}{black!25}{$\Gamma_2$}
\end{scope}
\draw[red, line width=2pt] (g1) -- (g2);

\begin{scope}[shift={(1.21,3.73)}, rotate=-90-72-72-72]
	\DrawGamma{blue!25}{black!25}{black!25}{$\Gamma_3$}
\end{scope}
\draw[blue, line width=2pt] (g1) -- (g2);

\begin{scope}[shift={(1.955,1.425)}, rotate=-90-72-72-72-72]
	\DrawGamma{red!25}{black!25}{black!25}{$\Gamma_4$}
\end{scope}
\draw[red, line width=2pt] (g1) -- (g2);
\node[label=below:$v$] (v) at (g2) {};
\draw[dashed, line width=2pt] (u) -- (v);
\end{tikzpicture}
\caption{Structures counted by $w$.}
\label{fig:orientations-5}
\end{figure}
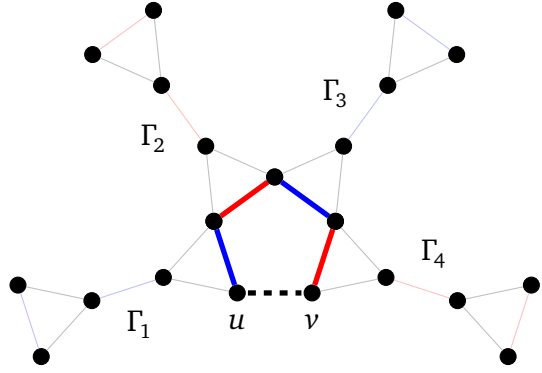

To bound the number of sets $S$ as in Figure \ref{fig:ab-types}(16), our test function $w$ is counting 4-tuples of gadgets like Figure \ref{fig:orientations-5} (and so $j=4$). $u, v$ are fixed, and so we have to choose 19 vertices and 10 colors, and each gadget contributes a $p^{14}$ factor for all the sharable edges and nonhitting colors. Thus
$$
w(\mc{H}_A) = \Theta(n^{19}n^{10}p^{4\cdot 14})=\Theta(d_A^4n),
$$
 verifying \ref{item: trackable size}. Next we verify \ref{item: trackable degrees}. The reader can check that if we fix any set $F'$ of $j' \in \{1, 2, 3\}$ of our gadgets, the total number of fixed vertices plus colors is always at least $7j'+3$. Since $u, v$ were already fixed that means we have at least $7j'+1$ additional fixed vertices or colors. To complete the 4-tuple of gadgets we then have to choose a total of $28-7j'$ vertices and colors. Thus 
 $$w\left(\left\{ F\in\binom{\mc{H}_A}{j} \colon F\supseteq F' \right\}\right) = O(n^{28-7j'}) \le \frac{w(\mc{H}_A)}{d^{j'+\eps}},
 $$
 where the last inequality follows from \eqref{eqn:ed} and the fact that the power of $n$ on the right hand side is $n^{29-7j'}$. 

 To verify \ref{item: trackable neighborhood}, we let $\G$ and $\G'$ be gadgets such that  with $w\left(\{ F \in \binom{\mc{H}_A}{j} : \G, \G' \in F\}\right) > 0$.  Let $\Gamma_1, \ldots, \Gamma_{j'}$ be a set of $j'$ edges in $\mc{H}_A$ such that both $C_\G :=\{\G, \Gamma_1, \ldots, \Gamma_{j'}\}$ and $C_{\G'}:=\{\G', \Gamma_1, \ldots, \Gamma_{j'}\}$ are conflicts. Let $S_\G$ and $S_{\G'}$ be the corresponding colored subgraph in $K_n$ isomorphic to one of the graphs in Figure~\ref{fig:conflict}. Observe that $\G$ (res.\ $\G'$) has to contribute at least one colored edge to $S_\G$ (res.\ $S_{\G'}$). Thus if we look at the set of all vertices in $K_n$ spanned by the gadgets  $\Gamma_1, \ldots, \Gamma_{j'}$ it contains an edge colored by $\G$ and a different edge colored by $\G'$. We know that each conflict of size $j'+1$ involves a total of $7(j'+1)+2$ vertices and colors. Fixing $\G$ determines a total of 9 vertices and colors used in the conflict $C_\G$, and fixing $\G'$ determines at least one additional vertex, so there are at most $7j'-1$ choices of vertices and colors to complete the conflict $C_\G$. Thus  $|((\mc{C}_A)_\G)^{(j')}\cap ((\mc{C}_A)_{\G'})^{(j')}| = O(n^{7j'-1}) < d_A^{j'-\varepsilon}$, verifying \ref{item: trackable neighborhood}.

Finally, \ref{item: trackable no conflicts} is satisfied since by definition $w$ assigns positive weight only for $E \in \binom{\mc{C}_A}{4}$ that are $\mc{C}_A$-free.

Thus, by Theorem~\ref{thm:test}, we have $$w(\mc{M}_A)=(1\pm d_A^{-\varepsilon^3})\frac{\Theta(d_A^4n)}{d_A^{4}} = \Theta(n).$$

This completes the last case (16) for the proof of  Lemma~\ref{lem:quasi-random}(\ref{lem:quasi-random-a-b}).

\subsection{Proof of Lemma~\ref{lem:quasi-random}(\ref{lem:quasi-random-x-lem})}

To prove this part we will need a lemma from \cite{GJKKL}, but first a definition. We say that a $j$-uniform hypergraph $\mc{X}$ is {\em $(D, r)$-spread} if we have $\Delta_{j'}(\mc{X}) \le Dr^{j'}$ for each $j'=0, \ldots j-1$. The following is essentially Lemma 6.5 from \cite{GJKKL} where we take $s=0$. 

\begin{lemma}[{\cite[Lemma~6.5]{GJKKL}}]\label{lem:X}
		Suppose $\mathfrak{X}$ is a family of uniform hypergraphs on vertex set $E(\mc{H})$, each having uniformity at most $2\ell$. Suppose each $j$-uniform $\mc{X} \in \mathfrak{X}$ is~$(D,1/d)$-spread with $D \ge d^j$. Suppose $|\mathfrak{X}| \le \exp\left(d^{\eps/300 \ell} \right)$. Then in Theorems \ref{thm:A} and \ref{thm:test} it is possible to take $\mc{M}$ such that for all $\mc{X} \in \mathfrak{X}$, $\mc{M}$ contains at most $Dd^{-j+\eps/12}$ edges of $\mc{X}$. 
	\end{lemma}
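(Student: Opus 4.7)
The plan is to prove the lemma by analyzing the random greedy matching process that underlies Theorems \ref{thm:A} and \ref{thm:test}, and tracking, for each $\mc{X}\in\mathfrak{X}$, the quantity $|\mc{M}_t\cap\mc{X}|$ where $\mc{M}_t$ is the partial matching after $t$ steps. In that process, at each step one chooses (roughly) uniformly at random among the available edges of $\mc{H}$ (those that would not close a conflict), and the key output of [GJKKL]/[JM22] is that each edge of $\mc{H}$ belongs to $\mc{M}$ with probability $\Theta(1/d)$, and more generally each particular set $F$ of $j'$ pairwise disjoint edges belongs to $\mc{M}$ with probability $O(d^{-j'})$ (up to factors $d^{o(1)}$). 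Combined with the spread assumption, this will yield both the expectation estimate and the concentration estimate we need.

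First, I would estimate $\E[|\mc{M}\cap \mc{X}|]$. Writing this as $\sum_{F\in\mc{X}}\P[F\subseteq\mc{M}]$ and using the survival estimate above, we get an upper bound of order $|\mc{X}|\cdot d^{-j}\le D\cdot d^{-j}$, since $|\mc{X}|=\Delta_0(\mc{X})\le D$ by the $(D,1/d)$-spread hypothesis. This already has the right order of magnitude; the factor $d^{\eps/12}$ in the statement leaves room for a concentration loss.

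Second, I would upgrade this expectation bound to a high-probability bound via a Freedman-type martingale inequality applied to $X_t:=|\mc{M}_t\cap \mc{X}|$ (or, more accurately, to $X_t$ minus its predictable compensator). The one-step increment of $X_t$ when a new edge $e$ enters $\mc{M}$ is at most the number of $F\in\mc{X}$ containing $e$ all of whose other edges are already in $\mc{M}_t$. The spread condition $\Delta_{j'}(\mc{X})\le D/d^{j'}$ gives, for each $e$, at most $D/d$ candidate $F$'s through $e$, and a further factor of $d^{-(j-1)}$ arises (via the survival-probability estimate) from insisting that the other $j-1$ edges are already chosen. With the predictable quadratic variation of this martingale bounded accordingly, Freedman's inequality gives a deviation bound of the form $\P(X_T\ge D d^{-j+\eps/12})\le \exp\bigl(-d^{\Omega(\eps/\ell)}\bigr)$, and then a union bound over $\mathfrak{X}$ succeeds because $|\mathfrak{X}|\le\exp(d^{\eps/(300\ell)})$, which is precisely the regime where this failure probability dominates.

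The main obstacle will be justifying, within the conflict-free nibble analysis of [GJKKL]/[JM22], that the per-step probability that $F$ becomes subset of $\mc{M}$ really is controlled by $d^{-j'}$ for each intermediate $j'\le j$ uniformly over time, and that the process is sufficiently ``memoryless'' that these intermediate probabilities can be combined across the $j$ edges of $F$ without losing too much. This is essentially a tracking statement for joint survival of several edges in the greedy process; the standard differential-equation/trajectory method for the conflict-free matching process should handle it, but the precise coupling of the spread condition to the multi-edge survival probabilities (including the $2\ell$ uniformity bound, which controls how large $j$ can be and hence how many factors of $d^{-1}$ must be tracked) is where the work lies. Once that tracking is in place, plugging in the spread bounds $\Delta_{j'}(\mc{X})\le D/d^{j'}$ at each level and summing yields both the mean bound $D/d^j$ and the concentration that, together with the union bound, produces the claimed $Dd^{-j+\eps/12}$.
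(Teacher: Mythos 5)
The paper does not prove this lemma from first principles: it is quoted as Lemma~6.5 of \cite{GJKKL} (specialized to $s=0$), stated there for a single hypergraph $\mc{X}$ as a probability bound on the event that the random output $\mc{M}$ contains more than $Dd^{-j+\eps/12}$ edges of $\mc{X}$, and the only argument the paper supplies is the final union bound over the family $\mathfrak{X}$, which succeeds because $|\mathfrak{X}|\le\exp(d^{\eps/300\ell})$ is dominated by the failure probability of the single-$\mc{X}$ statement. Your closing step --- union bound against a failure probability of the form $\exp(-d^{\Omega(\eps/\ell)})$ --- is exactly the deduction the paper actually performs, so that part matches.

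The rest of your proposal attempts to reprove the cited single-$\mc{X}$ bound itself, and there the argument has a genuine gap that you yourself flag. The entire content of the cited lemma is the claim that for a set $F$ of $j'$ pairwise disjoint edges of $\mc{H}$, $\P[F\subseteq\mc{M}]=O(d^{-j'+o(1)})$, uniformly over the run of the conflict-free greedy process; you invoke this as ``the key output of [GJKKL]/[JM22]'' and again as the source of the $d^{-(j-1)}$ factor in your increment bound, but you never establish it, so the argument is circular as written: you are using the conclusion of the lemma (in its $j'$-set form) to derive the lemma. A second, related problem is the application of Freedman's inequality: the one-step increment of $X_t=|\mc{M}_t\cap\mc{X}|$ is not almost surely small --- a single new edge $e$ can a priori complete up to $\Delta_1(\mc{X})\le D/d$ sets of $\mc{X}$ whose other edges already lie in $\mc{M}_t$ --- and the factor $d^{-(j-1)}$ you use to tame this is an expectation over the process history, not a deterministic bound. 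Freedman-type inequalities require both a bound on the predictable quadratic variation and an almost-sure (or truncated, with the truncation event controlled) bound on increments; supplying the latter requires precisely the uniform-in-time tracking of joint survival that is the technical heart of \cite{GJKKL}. So your outline correctly identifies the shape of the underlying proof, but as a standalone argument it asserts rather than proves its central estimate; for the purposes of this paper the correct and complete justification is simply the citation plus the union bound.
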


 The statement of the lemma in \cite{GJKKL} is for a single hypergraph $\mc{X}$ and it gives a probability bound for the event that $\mc{M}$ (which is the output of a random process in that paper) contains more than $Dd^{-j+\eps/12}$ edges of $\mc{X}$. The version we stated above follows from the probability bound given in \cite{GJKKL} and applying the union bound to our family $\mathfrak{X}$.

 Fix $e_1, e_2$. We will bound the number of uncolored edges $e'$ as described in Lemma~\ref{lem:quasi-random}(\ref{lem:quasi-random-x-lem}) by considering several cases. In each case we will consider a family of hypergraphs $\mc{X}$ satisfying the conditions of Lemma \ref{lem:X}. Since each family of hypergraphs will be only polynomial in size and there are a constant number of these families, Lemma~\ref{lem:quasi-random}(\ref{lem:quasi-random-x-lem}) will follow. 

\begin{figure}[ht]
\begin{subfigure}{.4\textwidth}
\centering
\begin{tikzpicture}[scale=1]
\begin{scope}[rotate=-18, shift={(0,0)}]
	\DrawGamma{black}{black}{black}{}
\end{scope}
\node (u) at (g3) {};
\node (v) at (g6) {};
\draw[dashed] (u) --node[midway,label=below:$e_1$]{} (v);
\begin{scope}[rotate=18, shift={(2.77,-.90)}]
	\DrawGamma{black}{black}{black}{}
\end{scope}
\node(u') at (g2) {};
\node (v') at (g4) {};
\draw[dashed] (u') -- node[midway,label=below:$e_2$]{} (v');

\node (a) at (g1) {};
\node[vert] (b) at ([yshift=40]a) {};
\draw[dashed] (a) -- node[midway,label=left:$e'$]{} (b);

\draw[thin, gray] ([shift={(.15,.15)}]b) rectangle ([shift={(-.15,-1.5)}]u);
\node[label=below:$S_1$] at ([shift={(-.15,-1.5)}]u) {};
\draw[thin, gray] ([shift={(-.15,.25)}]b) rectangle ([shift={(.15,-1.6)}]v');
\node[label=below:$S_2$] at ([shift={(.15,-1.6)}]v') {};
\end{tikzpicture}
\caption{}
\end{subfigure}
\begin{subfigure}{.4\textwidth}
\centering
    \begin{tikzpicture}[scale=1.5]
\node[vert] (u) at (0,0) {};
\node[vert] (v) at (1,0) {};
\node[vert] (x) at (2,0) {};
\node[vert] (y) at (3,0) {};
\node[vert] (a) at (1,1) {};
\node[vert] (b) at (2,1) {};
\draw[dashed] (u) -- node[midway,label=below:$e_1$] {}(v);
\draw[dashed] (x) -- node[midway,label=below:$e_2$] {}(y);
\draw[dashed] (a) -- node[midway,label=above:$e'$] {}(b);
\draw[blue] (x) -- (a);
\draw[blue] (y) -- (b);
\draw[red] (u) -- (a);
\draw[red] (v) -- (b);
\end{tikzpicture}
\caption{}
\end{subfigure}
\caption{Two configurations bounded by Lemma~\ref{lem:quasi-random}(\ref{lem:quasi-random-x-lem}).}
\label{fig:x-lem}
\end{figure}
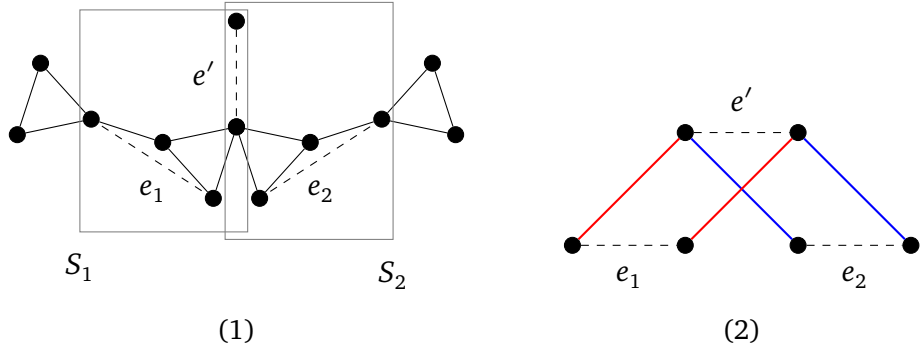

We consider cases based on what role $e'$ plays with $e_1$ and with $e_2$. We have $e' \in \mc{S}_{4, 1}(e_1) \cup \mc{S}_{5, 2}(e_1)$. Figure \ref{fig:ab-types}(3) shows the case $e' \in \mc{S}_{4, 1}(e_1)$, and \ref{fig:ab-types}(7)-(16) shows all possible cases $e' \in \mc{S}_{5, 2}(e_1)$ (where $e'$ is one of the dashed edges and $e_1=uv$). Likewise, $e'$ also plays one of these roles with respect to $e_2$. Thus, the cases we consider are all possible combinations of two roles (one for $e_1$, one for $e_2$) $e'$ could play. The role $e'$ plays with each of  $e_1, e_2$ implies that we certain colored edges which must come from some set of gadgets. For each case, the family of these sets of gadgets (which together would cause $e'$ to play the prescribed role with $e_1$ and $e_2$) will be one of our hypergraphs $\mc{X}$ when we apply Lemma \ref{lem:X}. 

We start with a special case which is a bit different from the rest.  Consider the case shown in Figure~\ref{fig:x-lem}(1), which is when $e'$ plays the role in Figure \ref{fig:ab-types}(14) with $e_1$ and also with $e_2$ and there is a single vertex shared by both gadgets and $e'$. We bound the number of pairs of gadgets $\G_1, \G_2$ contained in our matching $\mc{M}$. Let $\mc{X}$ be the hypergraph where the edges are these pairs $\G_1, \G_2$ (so here $j=2$). We verify that $\mc{X}$ is $(d_A^2, 1/d_A)$-spread. To determine a pair $\G_1, \G_2$ we must choose a total of 7 vertices and 6 colors, so
 \[
 \Delta_0(\mc{X}) = |\mc{X}| = O(n^{13}) \le d_A^2.
 \]
Now if we fix say $\G_1$ and count the number of possibilities for $\G_2$ we have to choose 3 vertices and 3 colors, so
\[
 \Delta_1(\mc{X}) =  O(n^{6}) \le d_A.
 \]
 thus $\mc{X}$ is $(d_A^2, 1/d_A)$-spread and by Lemma \ref{lem:X} we conclude that $\mc{M}$ contains at most $d_A^{\eps_A/12}$ pairs from $\mc{X}$. Note that in this case $e_1, e_2, e' \in E''$ ($e_1$ and $e_2$ are being used as sharable edges and $e'$ must belong to the same set). Since each vertex has sharable degree $O(n^{1-\d})$, the total number of choices for $e'$ is at most $O(n^{1-\d}d_A^{\eps_A/12})=O(n^{1-\d/2})$ by \eqref{eqn:ed}. There is similar case where $e_1$ and $e_2$ share a vertex.

 Note that in Figure \ref{fig:ab-types}(3) and (7)--(16), only in (14) do we see a dashed edge that has an endpoint adjacent to no colored edges. This is what is special about the first case we considered: even after we bounded the number of sets of gadgets, we had a ``free choice'' of the uncolored edge $e'$. In all of the future cases $e'$ will have both endpoints at vertices that are used by one of our gadgets, i.e.\ $e'$ is determined up to $O(1)$ choices by the set of gadgets. We will check two such cases by hand and leave the rest for the reader. We claim that for each such $j$-uniform hypergraph $\mc{X}$ we have that $\mc{X}$ is $(n^{7j}, 1/d_A)$-spread. By  Lemma \ref{lem:X} this implies $\mc{M}$ contains at most $n^{7j}d_A^{-j+\eps/12}=O(n^{1-\d/2}$ edges of $\mc{X}$. Thus, Lemma \ref{lem:quasi-random}(\ref{lem:quasi-random-x-lem}) follows from checking our claim that each $\mc{X}$ is $(n^{7j}, 1/d_A)$-spread. We will check an easy case and a more complicated case in detail and leave the rest to the reader. 
 
Let us check the case where $e_1, e_2$ are disjoint and they both play the role in Figure \ref{fig:ab-types}(3) (see Figure \ref{fig:x-lem}(2)). Say $\G_1, \G_2$ are each responsible for one of the red edges, and $\G_3, \G_4$ are responsible for the blue edges. We let $\mc{X}$ be the set of all such $4$-tuples $\{\G_1, \G_2, \G_3, \G_4\}$. We verify that $\mc{X}$ is $(n^{28}, 1/d_A)$-spread. We 
 have (justification follows)
\[
\Delta_0(\mc{X}) = O(n^{28}),\quad \Delta_1(\mc{X}) = O(n^{20}),\quad \Delta_2(\mc{X}) = O(n^{13}),\quad \Delta_3(\mc{X}) = O(n^6).
\]
Indeed, to justify the $\Delta_0(\mc{X})$ bound we can determine an entire 4-tuple $\{\G_1, \G_2, \G_3, \G_4\}$ by choosing a total of 18 vertices and 10 colors. For $\Delta_1(\mc{X})$ we fix one of our gadgets (say $\G_1$) and note that to determine $\G_2, \G_3, \G_4$ we choose 13 vertices and 7 colors. For $\Delta_2(\mc{X})$ we fix two gadgets and then determine the other two by choosing either 8 vertices and 5 colors or 9 vertices and 4 colors (depending on which two gadgets we fixed). For $\Delta_3(\mc{X})$ we fix say $\G_1, \G_2, \G_3$ and determine $\G_4$ by choosing 4 vertices and 2 colors. Thus $\mc{X}$ is $(n^{28}, 1/d_A)$-spread and we are done with this case.

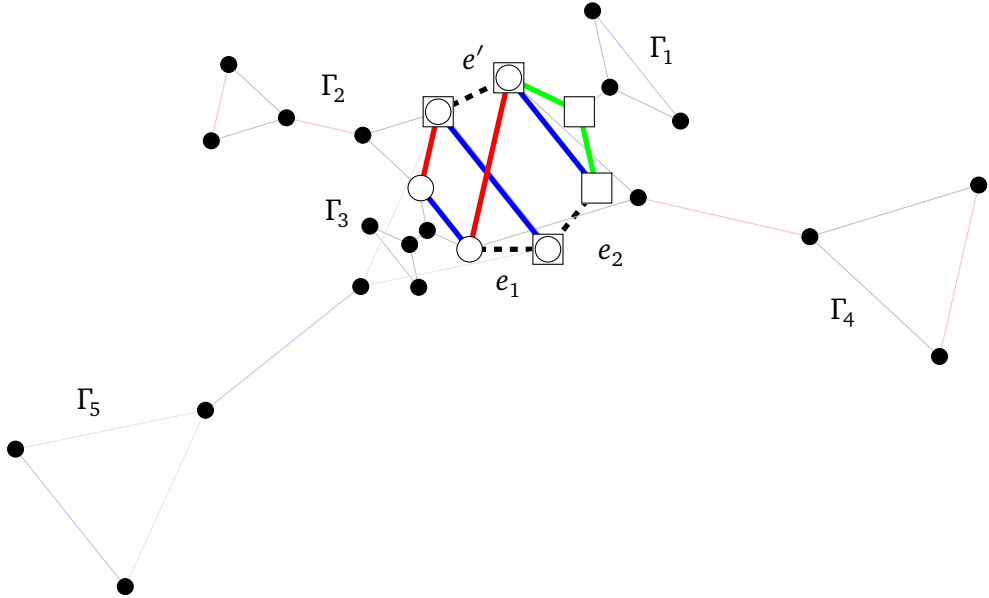
\begin{figure}[ht]
\centering
\begin{tikzpicture}[scale=1.2]
\foreach \i in {0,1,2,3,4,5,6}
	\node[vert] (\i) at ({cos(360 * \i/7 - 2*360/7-90/7)}, {sin(360*\i/7- 2*360/7-90/7}) {};
	
\draw[dashed, , line width=2pt] (0) --node[midway,label=below:$e_1$]{} (1);
\draw[dashed, , line width=2pt] (1) --node[midway,label=below right:$e_2$]{} (2);
\draw[dashed, , line width=2pt] (4) --node[midway,label=above:$e'$]{} (5);

\begin{scope}[shift={(.95,.76)},]
\begin{scope}[rotate=360/7*4+90/7, xscale=0.434756528381, yscale=1.56366296451]
	\DrawGamma{blue!25}{black!25}{green!25}{}
\end{scope}
\end{scope}

\begin{scope}[shift={(-2.04,.46)},]
\begin{scope}[rotate=360/7*7-90/7, xscale=0.867767478, yscale=0.867767478]
	\DrawGamma{red!25}{black!25}{black!25}{$\Gamma_2$}
\end{scope}
\end{scope}

\begin{scope}[shift={(-1.0,-.77)},]
\begin{scope}[rotate=360/7*8-90/7, xscale=0.25, yscale=0.867767478]
	\DrawGamma{blue!25}{black!25}{black!25}{}
\end{scope}
\end{scope}

\begin{scope}[shift={(2.385,-.545)},]
\begin{scope}[rotate=360/7*7-90/7, xscale=1.9499120493, yscale=1.9499120493]
	\DrawGamma{red!25}{black!25}{black!25}{}
\end{scope}
\end{scope}

\begin{scope}[shift={(-2.5,-2.0)},]
\begin{scope}[rotate=360/7*1-90/7, xscale=2.2, yscale=1.9499120493]
	\DrawGamma{blue!25}{black!10}{black!10}{}
\end{scope}
\end{scope}

\foreach \i/\j in {4/3,3/2} {
	\draw[green, line width=2pt] (\i) -- (\j);
}

\foreach \i/\j in {0/6,1/5,2/4} {
	\draw[blue, line width=2pt] (\i) -- (\j);
}

\foreach \i/\j in {5/6,0/4} {
	\draw[red, line width=2pt] (\i) -- (\j);
}

\foreach \i in {1,2,3,4,5} {
	\node[fixed] (\i) at (\i) {};
}
\foreach \i in {4,5,6,0,1} {
	\node[fixed2] (\i) at (\i) {};
}

\node (label1) at (1.7,1.3) [] {$\Gamma_1$};

\node (label3) at (-1.9,-.5) [] {$\Gamma_3$};
\node (label4) at (3.7,-1.6) [] {$\Gamma_4$};
\node (label5) at (-4.65,-2.6) [] {$\Gamma_5$};

\end{tikzpicture}
\caption{${\circ}$ vertices are in $S_1$ and $\square$ vertices are in $S_2$.}
\label{fig:x-lem-3}
\end{figure}

Now we consider a more complicated case shown in Figure~\ref{fig:x-lem-3}. In this case $e_1$ and $e_2$ share a vertex. $e'$ plays the role in Figure \ref{fig:ab-types}(15) with $e_1$, so there is a set $S_1$ of 5 vertices including both $e_1$ and $e'$. $e'$ plays the role in Figure \ref{fig:ab-types}(9) with $e_1$, so there is a set $S_2$ of 5 vertices including both $e_2$ and $e'$. Furthermore we chose to consider a case where $S_1 \cap S_2$ contains an edge (in blue) that contributes to the color repetition within $S_1$ and also within $S_2$. This case will be an application of Lemma \ref{lem:X} with $j=5$. Let $\mc{X}$ be the set of all $5$-tuples $\{\G_1, \G_2, \G_3, \G_4, \G_5\}$ as shown in Figure~\ref{fig:x-lem-3}. Then we have (justification follows)
\[
\Delta_0(\mc{X}) = O(n^{34}),\quad \Delta_1(\mc{X}) = O(n^{26}),\quad \Delta_2(\mc{X}) = O(n^{19}),\quad \Delta_3(\mc{X}) = O(n^{12}),\quad \Delta_4(\mc{X}) = O(n^{6}).
\]
To justify $\Delta_0(\mc{X})$ we can determine a 5-tuple in $\mc{X}$ by choosing a total of 23 vertices and 11 colors, i.e.\ 34 choices for pieces of information where we have $O(n)$ possibilities for each choice. For $\Delta_1(\mc{X})$ note that fixing any one gadget fixes at least 5 vertices not in $e_1 \cup e_2$ and fixes 3 colors (i.e.\ 8 of our 34 choices), so to complete a 5-tuple of $\mc{X}$ we make at most 26 choices. For $\Delta_2(\mc{X})$ note that fixing 2 of our gadgets fixes at least 15 choices for colors and vertices, where an example of the worst case is when we fix $\G_1, \G_4$. For $\Delta_3(\mc{X})$ note that fixing 3 gadgets fixes at least 22 choices for colors and vertices, and the worst case is when we fix $\G_1, \G_4, \G_5$. Finally for $\Delta_4(\mc{X})$ note that if we fix 4 gadgets (a worst case being say $\G_1, \G_2, \G_3, \G_4$) we need to choose at most 4 vertices and 2 colors for the last gadget. Putting together the above bounds, we see that $\mc{X}$ is $(n^{35}, 1/d_A)$-spread as claimed.

\end{document}